\documentclass{siamart250211}
\usepackage[protrusion=true,expansion=true,final,tracking=true,kerning=true,spacing=true]{microtype}
\microtypecontext{spacing=nonfrench}
\UseMicrotypeSet[protrusion]{basicmath}
\usepackage{mathrsfs}
\usepackage{braket}

\usepackage{lipsum}
\usepackage{amsfonts}
\usepackage{graphicx}
\usepackage{epstopdf}
\usepackage{dsfont}
\usepackage{amssymb}
\usepackage[sort, square, numbers]{natbib}

\usepackage{algorithm} %
\usepackage{algpseudocode} %

\ifpdf
  \DeclareGraphicsExtensions{.eps,.pdf,.png,.jpg}
\else
  \DeclareGraphicsExtensions{.eps}
\fi

\crefformat{equation}{(#2#1#3)}
\Crefformat{equation}{(#2#1#3)}

\newsiamremark{remark}{Remark}
\newsiamremark{hypothesis}{Hypothesis}
\crefname{hypothesis}{Hypothesis}{Hypotheses}
\newsiamthm{claim}{Claim}
\newsiamremark{problem}{Problem}

\newsiamremark{example}{Example}

\headers{Average-case exact regularization of LPs}{M. P. Friedlander, S. Kubal, Y. Plan, and M. S. Scott} 

\title{%
   Average-case thresholds for\\ exact regularization of linear programs%
   \thanks{%
      Authors listed in alphabetical order. SK and MSS led
      theoretical development and manuscript preparation with guidance
      and contributions from MPF and YP.
      Research funded in part by the Natural Sciences and Engineering Council of Canada.
   }%
}

\author{Michael P. Friedlander\footnotemark[2]  \footnotemark[3]
\and Sharvaj Kubal\footnotemark[2]
\and Yaniv~Plan\footnotemark[2]
\and \mbox{Matthew S. Scott}\thanks{Dept.\@ of Mathematics; $^\ddagger$Dept.\@ of Computer Science, University of British Columbia, Canada.}
}

\usepackage{amsopn}

\makeatletter
\newcommand*{\addFileDependency}[1]{%
  \typeout{(#1)}%
  \@addtofilelist{#1}%
  \IfFileExists{#1}{}{\typeout{No file #1.}}%
}
\makeatother

\input{siam_preamble.sty}
\usepackage{enumitem}
\usepackage{mathtools}

\ifpdf
\hypersetup{
  pdftitle={Average-case thresholds for exact regularization of linear programs},
  pdfauthor={M.P. Friedlander, S. Kubal, Y. Plan, M.S. Scott}
}
\fi

\begin{document}

\maketitle
\begin{abstract}
Small regularizers can preserve linear programming solutions exactly. This paper provides the first average-case analysis of exact regularization: with a standard Gaussian cost vector and fixed constraint set, bounds are established for the probability that exact regularization succeeds as a function of regularization strength. Failure is characterized via the Gaussian measure of inner cones, controlled by novel two-sided bounds on the measure of shifted cones. Results reveal dimension-dependent scaling laws and connect exact regularization of linear programs to their polyhedral geometry via the normal fan and the Gaussian (solid-angle) measure of its cones. Computable bounds are obtained in several canonical settings, including regularized optimal transport. Numerical experiments corroborate the predicted scalings and thresholds.

\end{abstract}

\begin{keywords}
  Exact regularization, linear programming, Gaussian measure, optimal transport 
\end{keywords}

\begin{AMS}
  52A38, 60D05, 90C05, 90C31, 90C46
\end{AMS}

\section{Introduction}
\label{loc:body.introduction}

Regularization transforms ill-posed optimization problems into well-conditioned ones by adding penalty terms. These penalties provide uniqueness, stability under data perturbations, and algorithms with improved convergence. For linear programs, a remarkable phenomenon occurs: for sufficiently small regularization, solutions of the regularized problem remain solutions of the unregularized problem. The exact regularization threshold, however, depends on problem geometry and cannot be determined without first solving the original problem. We develop an average-case analysis that treats the cost vector as a Gaussian random variable and establish tail bounds describing how the exact regularization threshold concentrates near dimension-dependent values. This analysis reveals that exact regularization depends on the Gaussian measure of shifted normal cones and the geometry of polytope vertices.
The bounds are computable and furnish explicit estimates for problems such as quadratically regularized optimal transport and linear regularization on the $\infty$-norm ball.

The linear program with data $(A, b, g)\in \mathbb{R}^{m \times n} \times \mathbb{R}^m \times \mathbb{R}^n$ takes the form
\begin{equation} \label{loc:optimization_problem_p0_exact_regularization.statement}
\mathrm{minimize} \set{-g \cdot x | Ax \leq b}.
\tag{\ensuremath{P_{0}}}
\end{equation}
Its regularized counterpart
\begin{equation}
\label{loc:optimization_problem_pep_of_exact_regularization.statement}
\mathrm{minimize} \set{-g \cdot x + \varepsilon\psi(x) | Ax \leq b},
\tag{\ensuremath{P_{\varepsilon}}}
\end{equation}
retains the same constraint set but augments the objective with the penalty $\varepsilon\psi(x)$, where $\psi:\mathbb{R}^n \to (-\infty, \infty]$ is a proper convex function and $\varepsilon \geq 0$.

For sufficiently small regularization parameter $\varepsilon$, every solution of the regularized problem $P_\varepsilon$ solves the original problem $P_0$. That is, $\mathrm{Sol}(P_\varepsilon) \subseteq \mathrm{Sol}(P_0)$. We call this property \emph{exact regularization}. Deterministic analysis of exact regularization began with \citet{mangasarian_NonlinearPerturbationLinear_1979}, whose work and subsequent contributions \cite{mangasarian_NormalSolutionsLinear_1984a, mangasarian_RegularizedLinearPrograms_1999} establish conditions under which $P_{0}$ and $P_{\varepsilon}$ share solutions, including cases where only local optimality is preserved.

Exact regularization addresses both computational and theoretical challenges in linear programming. Computationally, LPs may exhibit nonunique solutions and extreme sensitivity to data perturbations. Regularization with a strongly convex function $\psi$ (such as $\frac{1}{2} \|\cdot\|^2$) attenuates these issues. The dual relationship between strong convexity and smoothness produces a smooth dual problem, which admits first-order algorithms with nonasymptotic linear convergence rates~\cite{mangasarian_IterativeSolutionLinear_1981, yin_AnalysisGeneralizationsLinearized_2010, lai_AugmentedL1NuclearNorm_2013}. Theoretically, perturbation analysis reveals fundamental properties of LPs: \citet{mangasarian_UniquenessSolutionLinear_1979} leverages perturbations to characterize solution uniqueness, while \citet{mangasarian_NormalSolutionsLinear_1984a} connected solvability to projections onto feasible regions. This framework extends naturally beyond linear programming. \citet{ferris_FinitePerturbationConvex_1991}, \citet{friedlander_ExactRegularizationConvex_2008}, and \citet{deng_ExactRegularizationIts_2015} develop exact regularization theory for general convex programs and establish connections with exact penalization \cite{bertsekas_NecessarySufficientConditions_1975}. Most recently, \citet{gonzalez-sanzQuantitativeConvergenceQuadratically2025} apply these ideas to quadratically regularized optimal transport.

Exact regularization theory characterizes the threshold value
\begin{equation}
\label{eq:eps_threshold}
\overline{\varepsilon}:= \sup \set{\varepsilon \ge 0 \mid \mathrm{Sol}(P_{\varepsilon'})\subseteq \mathrm{Sol}(P_{0})\,\text{for all }\varepsilon' \in [0,\varepsilon)}
\end{equation}
through the dual of a selection problem
\begin{equation*}
\minimize_{x} \set{ \psi(x) | Ax \leq b, \ -g\cdot x \leq p^\star }.
\tag{\ensuremath{P^\psi}}
\end{equation*}
Here $p^\star = \inf \{ -g\cdot x \mid Ax \leq b \}$ is the optimal value of $P_0$. \citet[Theorem 1]{mangasarian_NonlinearPerturbationLinear_1979} establishes that $\overline{\varepsilon}$ equals the inverse of the Lagrange multiplier of the objective constraint in $P^\psi$.
 \citet[Theorem 2.1, Corollary 2.2]{friedlander_ExactRegularizationConvex_2008} characterize the existence of this multiplier as both necessary and sufficient for exact regularization in general convex programs. Determining $\overline{\varepsilon}$ thus requires solving both $P_0$ and $P^\psi$. Without prior knowledge of $\overline{\varepsilon}$ from the LP data $(g, A, b, \psi)$, we adopt a probabilistic approach. Randomizing problem data enables average-case analysis, a methodology well-established in computational complexity \cite{johnson_NPcompletenessColumnOngoing_1984} fruitful for convex optimization \cite{wendel_ProblemGeometricProbability_1962, cheung_SolvingLinearPrograms_2004, amelunxen_LivingEdgePhase_2014}. 

We randomize the cost vector of $P_{0}$ as $g \sim \mathcal{N}(0, I_{n})$, a standard Gaussian vector in $\mathbb{R}^n$. With $(A, b, \psi)$ fixed, we quantify the probability that $\mathrm{Sol}(P_{\varepsilon})\subseteq \mathrm{Sol}(P_0)$ for some $\varepsilon$. The choice of a Gaussian distribution provides rotational symmetry, treating all directions equally. For large $n$, the Gaussian random vector $g$ concentrates around the sphere $\sqrt{n}\,\mathbb{S}^{n-1}$ and can be viewed as a random direction with approximately fixed norm~\cite[Section 3.3.3]{vershynin_HighDimensionalProbabilityIntroduction_2018}. 

\subsection{Polyhedral geometry and normal cone structure} \label{loc:introduction_to_exact_regularisation_with_proof.mathematical_set:up_and_the_central_theorem}

We require the following geometric framework. Denote the nonempty polyhedral feasible region of $P_{0}$ by $Q := \set{ x \in \mathbb{R}^n | Ax \leq b}$. We focus on polytopes (bounded polyhedra) $Q$, which ensures finite optimal values $p^*$. For unbounded $Q$, our results apply conditionally on the existence of solutions.

The normal cone to $Q$ at $x\in Q$ is defined by 
\begin{equation*}
N(x) := \{ v \in \mathbb{R}^n \mid v\cdot (y-x) \leq 0\,\text{ for all } y \in Q \}.
\end{equation*}
This cone admits the finite representation
$N(x) = \conv\cone \set{ a_{i} | i \in J(x) }$,
where $a_i$ denotes the $i$th row of $A$ and $J(x):= \set{i \in[m] | a_{i}\cdot x= b_{i} }$ indexes the active constraints at $x$~\cite[Section A.5.2]{hiriart-urruty_FundamentalsConvexAnalysis_2001}.

A vertex is a singleton exposed face of $Q$ \cite[Section A.2.3]{hiriart-urruty_FundamentalsConvexAnalysis_2001}:
\begin{equation*}
\vrt(Q) := \{ z \in Q \mid Q \cap H = \{ z \}\,\text{ for a supporting hyperplane }H \text{ of }Q\}.
\end{equation*}
For polyhedral $Q$, the finite set $\vrt(Q)$ coincides with the extreme points $\mathrm{ext}(Q)$. The point $x$ is a vertex if and only if $\mathrm{rank}(A_{J(x)})= n$, where $A_{J(x)}$ denotes the submatrix of $A$ with rows indexed by $J(x)$. Vertices with exactly $n$ active constraints ($|J(x)|=n$) are \emph{nondegenerate}; those with $|J(x)|>n$ are \emph{degenerate}~\cite{bertsimas1997introduction}. Our analysis centers on the normal cones $N(z)$ at vertices $z\in\vrt(Q)$.

For a set $A \subseteq \mathbb{R}^n$ and $w  \in\mathbb{R}^n$, denote the distance between $w$ and $A$ by $\textrm{dist}(w, A) := \inf_{a \in A} \lVert w-a\rVert$. For a \emph{closed convex} set $A \subseteq \mathbb{R}^n$, the projection of $w$ onto $A$ is unique and given by $\proj_A w := \argmin_{a  \in A} \lVert w-a\rVert$. Given a cone $V \subseteq \mathbb{R}^n$, define its \emph{dual} $V^* = \set{ y | y \cdot x \geq 0 \text{ for all } x  \in V }$.

\subsection{Optimality and Gaussian measure}

A point $z \in Q$ is optimal for $P_{0}$ if and only if
\begin{equation}
\label{eq:optimality_p0}
  g \in N(z).
\end{equation}
Since the cost vector $g$ is Gaussian, the probability that $z$ solves $P_0$ equals the Gaussian measure of the normal cone $N(z)$. The standard Gaussian measure of any measurable subset $B$ of $\mathbb{R}^n$ is
\begin{equation*}
\gamma(B) := \mathbb{P}(g \in B) = \int_{B} {(2\pi)^{-n/2}} e^{ -\|x\|^2/2 }\,dx.
\end{equation*}
When $B$ is a cone, its Gaussian measure equals its normalized solid angle~\cite[Equation 1]{ribando_MeasuringSolidAngles_2006}, which is the proportion of the sphere captured by the cone. Thus~\eqref{eq:optimality_p0} conveys the probabilistic statement that $z \in Q$ is a solution to $P_{0}$ with probability $\gamma(N(z))$. Since $\gamma$ assigns zero measure to lower-dimensional sets, only full-dimensional normal cones matter. The following proposition shows that these occur precisely at vertices of $Q$, where they partition $\mathbb{R}^n$ when $Q$ is a polytope.

\begin{proposition}[Normal cone properties]
\label{loc:normal_cones_and_faces.statement}
Let $Q\subset \mathbb{R}^n$ be polyhedral.
\begin{enumerate}
\item The normal cone $N(z)$ at $z \in Q$ is $n$-dimensional if and only if $z \in \vrt(Q)$.
\item If $x \in Q$ and $z \in \vrt(Q)$ are distinct, then $N(x) \cap N(z)$ is a cone of dimension strictly less than $n$. Furthermore, $N(x) \cap \mathrm{int}\,N(z) = \emptyset$.
\item If $Q$ is additionally a polytope, then $\cup_{z \in \vrt(Q)}N(z) = \mathbb{R}^n$.
\end{enumerate}
\end{proposition}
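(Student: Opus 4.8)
The plan is to prove the three parts in order, since each builds naturally on the finite representation $N(x) = \conv\cone\{a_i \mid i \in J(x)\}$ and the characterization that $x$ is a vertex iff $\rank(A_{J(x)}) = n$.

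\textbf{Part 1.} For the forward direction, if $N(z)$ is $n$-dimensional, then $\cone\{a_i \mid i \in J(z)\}$ spans $\mathbb{R}^n$, so $\rank(A_{J(z)}) = n$, which (by the stated vertex criterion) forces $z \in \vrt(Q)$. For the converse, suppose $z \in \vrt(Q)$ so $\rank(A_{J(z)}) = n$. I would show $N(z)$ has nonempty interior by exhibiting an interior point: since $z$ is an exposed vertex, there is a supporting hyperplane meeting $Q$ only at $z$, i.e. a vector $v$ with $v \cdot (y - z) < 0$ for all $y \in Q \setminus \{z\}$; one checks $v \in \interior N(z)$ using compactness of a base of the feasible directions, or alternatively one notes that $\rank(A_{J(z)}) = n$ means $A_{J(z)}$ has a right inverse, and a strictly positive combination $\sum_{i \in J(z)} \lambda_i a_i$ with all $\lambda_i > 0$ lies in the interior of the finitely generated cone (this requires that the generators positively span $\mathbb{R}^n$, which I would need to confirm — a full-rank set of generators need not positively span, so the cleaner route is via the supporting hyperplane). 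So the supporting-hyperplane argument is the safer path: normalize the strict separator and verify it admits a neighborhood still contained in $N(z)$.

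\textbf{Part 2.} Let $x \in Q$ and $z \in \vrt(Q)$ be distinct. The key identity is that $N(x) \cap N(z) \subseteq N(F)$ where $F$ is the smallest face containing both $x$ and $z$ (equivalently, $v \in N(x) \cap N(z)$ means the linear functional $v \cdot \,$ is maximized over $Q$ at both $x$ and $z$, hence on the whole segment $[x,z]$ and on the exposed face it determines, which has dimension $\ge 1$ since $x \ne z$). A normal cone to a face of dimension $\ge 1$ lies in the orthogonal complement of that face's affine hull direction, hence has dimension $\le n-1$; I would spell this out via $N(x) \cap N(z) \subseteq (\aff F - \aff F)^\perp$. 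For the second claim, $N(x) \cap \interior N(z) = \emptyset$: if $v \in \interior N(z)$, then a neighborhood of $v$ lies in $N(z)$, so $v$ and nearby points all make $v' \cdot \,$ maximized at $z$ over $Q$; but if also $v \in N(x)$ then $v \cdot (z - x) = 0$, and perturbing $v$ slightly toward $z - x$ keeps it in $N(z)$ while making $v' \cdot x > v' \cdot z$, contradicting $v' \in N(z)$ — so no point of $\interior N(z)$ can satisfy the tie needed for membership in $N(x)$ when $x \ne z$.

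\textbf{Part 3.} Assume $Q$ is a polytope. Given any $g \in \mathbb{R}^n$, the linear program $\minimize\{-g \cdot x \mid x \in Q\}$ attains its maximum of $g \cdot x$ over the compact set $Q$, and the maximum is attained at some vertex $z \in \vrt(Q)$ (a standard fact for polytopes: the optimal face is a nonempty face, and every nonempty face of a polytope contains a vertex). Then $g \in N(z)$ by the optimality characterization~\eqref{eq:optimality_p0}. Hence every $g$ lies in some $N(z)$, giving $\cup_{z \in \vrt(Q)} N(z) = \mathbb{R}^n$; the reverse inclusion is trivial.

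\textbf{Main obstacle.} The only genuinely delicate point is the converse in Part 1 — verifying that the normal cone at a vertex is actually full-dimensional rather than merely generated by a full-rank set of rows (which alone does not guarantee $n$-dimensionality of the conical hull). I expect to handle this cleanly through the supporting-hyperplane definition of exposed vertex together with the polyhedrality of $Q$, producing an explicit interior point of $N(z)$; the compactness needed to turn strict inequality on $Q \setminus \{z\}$ into an open condition is where I would be most careful, and for unbounded polyhedral $Q$ one instead argues with the recession cone.
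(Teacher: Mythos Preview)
Your outline is essentially sound, but the ``main obstacle'' you flag in Part~1 is not an obstacle, and there is a sign slip in the Part~2 perturbation.

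For Part~1, the dimension of a convex cone through the origin equals the dimension of its linear span, and the span of $\conv\cone\{a_i \mid i \in J(z)\}$ is exactly $\mathrm{span}\{a_i \mid i \in J(z)\}$. Hence $\dim N(z) = \rank(A_{J(z)})$, and both directions follow at once from the vertex criterion; this is the paper's one-line proof. You are conflating ``$n$-dimensional'' (affine hull is all of $\mathbb{R}^n$) with ``positively spanning'' (the cone itself is all of $\mathbb{R}^n$); only the former is required, and it is automatic once the generators have full rank. The supporting-hyperplane route would work but is unnecessary, and no recession-cone argument for unbounded $Q$ is needed.

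For Part~2, the paper handles the first claim via the identity $N(x)\cap N(z) = N\!\bigl(\tfrac12(x+z)\bigr)$ and then invokes Part~1, since the midpoint is not a vertex; your face-containment argument is the same idea. For the second claim your perturbation has the wrong sign: moving $v$ toward $z-x$ yields $v'\cdot z > v'\cdot x$, not the reverse; perturb toward $x-z$ instead to obtain $v'\cdot x > v'\cdot z$ and hence $v'\notin N(z)$, contradicting $v'\in \mathrm{int}\,N(z)$. The paper uses a different device here: single-valuedness of the Euclidean projection onto $Q$ forces $[x+N(x)]\cap[z+N(z)]=\emptyset$ for $x\ne z$, and a scaling argument shows any $v\in N(x)\cap\mathrm{int}\,N(z)$ would place a point in that intersection. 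Your (corrected) perturbation is arguably more direct. Part~3 matches the paper.
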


This near-partitioning of the space by vertex normal cones forms the \emph{normal fan} of $Q$~\cite{lu_NormalFansPolyhedral_2008}.
The following proposition is a consequence of \eqref{eq:optimality_p0} and \Cref{loc:normal_cones_and_faces.statement}.

\begin{proposition}[Uniqueness in $P_{0}$]
\label{loc:uniqueness_of_solution_in_a_random_linear_program.statement}
If the feasible region $Q$ is a polytope, then $P_{0}$ has a unique solution at a vertex of $Q$ with probability 1. This (almost surely defined) unique solution $z^\star\in \vrt(Q)$ has distribution
\begin{equation*}
\mathbb{P}\{ z^\star = z \} = \gamma(N(z)) \qquad (z \in \vrt(Q)).
\end{equation*}
\end{proposition}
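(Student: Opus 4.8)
The plan is to push everything through the optimality characterization \eqref{eq:optimality_p0} together with the structural facts in \Cref{loc:normal_cones_and_faces.statement}, isolating a single Gaussian-null set of exceptional cost vectors outside of which the conclusion holds deterministically. As preliminaries I would note that a nonempty polytope $Q$ is compact with a finite, nonempty vertex set $\vrt(Q)$ (so that the identity $\bigcup_{z\in\vrt(Q)}N(z)=\reals^n$ of \Cref{loc:normal_cones_and_faces.statement} is meaningful), and that, $Q$ being compact and $x\mapsto -g\cdot x$ continuous, $P_{0}$ attains its infimum for every realization of $g$. Next I would define the exceptional set $E:=\reals^n\setminus\bigcup_{z\in\vrt(Q)}\interior N(z)$ and argue $\gamma(E)=0$: for each vertex $z$ the cone $N(z)$ is full-dimensional and convex by \Cref{loc:normal_cones_and_faces.statement}, so its topological boundary $N(z)\setminus\interior N(z)$ is Lebesgue-null, hence $\gamma$-null; since $\bigcup_{z}N(z)=\reals^n$, every point lying in no $\interior N(z)$ lies on one of these boundaries, so $E$ is contained in a finite union of $\gamma$-null sets.

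On the complementary event $\{g\notin E\}$, which has probability $1$, the interiors $\interior N(z)$, $z\in\vrt(Q)$, are pairwise disjoint by \Cref{loc:normal_cones_and_faces.statement}, so there is exactly one vertex, call it $z^\star$, with $g\in\interior N(z^\star)$; this defines $z^\star$ almost surely. I would then check that $z^\star$ is precisely the unique solution of $P_{0}$: by \eqref{eq:optimality_p0}, $g\in N(z^\star)$ gives $z^\star\in\mathrm{Sol}(P_{0})$, while for any other $x\in Q$, \Cref{loc:normal_cones_and_faces.statement} gives $N(x)\cap\interior N(z^\star)=\varnothing$, hence $g\notin N(x)$ and $x\notin\mathrm{Sol}(P_{0})$, again by \eqref{eq:optimality_p0}; therefore $\mathrm{Sol}(P_{0})=\{z^\star\}$, a single vertex. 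For the law of $z^\star$, fix $z\in\vrt(Q)$: on $\{g\notin E\}$ the events $\{z^\star=z\}$ and $\{g\in\interior N(z)\}$ agree, so $\prob\{z^\star=z\}=\gamma(\interior N(z))=\gamma(N(z))$, the last equality because $N(z)\setminus\interior N(z)$ is $\gamma$-null; as a consistency check, $\sum_{z\in\vrt(Q)}\gamma(N(z))=\gamma\bigl(\bigcup_z\interior N(z)\bigr)=1-\gamma(E)=1$.

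I do not anticipate a genuine obstacle, since \Cref{loc:normal_cones_and_faces.statement} supplies all the geometry; the only delicate point is the measure-theoretic bookkeeping, namely justifying that the boundary of a full-dimensional convex cone carries no Gaussian mass and confirming that this boundary set is the \emph{only} place where uniqueness or attainment at a vertex can fail. If one preferred to sidestep the cone-boundary estimate, an alternative is to observe that $\mathrm{Sol}(P_{0})$ is the face of $Q$ exposed by $g$, that a positive-dimensional exposed face forces $g$ into the (at most $(n-1)$-dimensional) normal cone of that face, and that such lower-dimensional cones are $\gamma$-null; but the argument above is more economical given what has already been established.
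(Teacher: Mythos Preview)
Your proposal is correct and follows essentially the same route as the paper: define the exceptional set where $g$ misses every $\interior N(z)$, show it is $\gamma$-null, and on its complement read off the unique vertex $z^\star$ via \Cref{loc:normal_cones_and_faces.statement} and \eqref{eq:optimality_p0}. The only cosmetic difference is that the paper justifies $\gamma(\mathrm{bd}\,N(z))=0$ by invoking polyhedrality (the boundary lies in finitely many hyperplanes) rather than the general fact that boundaries of full-dimensional convex sets are Lebesgue-null, and it adds a one-line indicator-sum formula to make measurability of $z^\star$ explicit.
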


The proofs of these propositions appear in \Cref{sec:aux_proofs}.

\subsubsection{Regularization and shifted cones}

To analyze the regularized program $P_{\varepsilon}$, we first establish its optimality conditions, adopting the convention that $(\varepsilon\psi) = \delta_{\dom\psi}$ when $\varepsilon=0$ (scalar multiplication otherwise). This convention
implies that multiplying $\psi$ by zero preserves the effective domain
of $\psi$, and thus $P_\varepsilon$ is different
from $P_0$, even when $\varepsilon = 0$, at least in the case where
$\dom(\psi) \neq \mathbb{R}^n$. This convention appears in \citet[Lemma 4.3]{aravkin_VariationalPropertiesValue_2013}, and \citet[pg.~141]{zalinescu_ConvexAnalysisGeneral_2002}.

A point $x \in Q$ is optimal for $P_{\varepsilon}$ if and only if
\begin{align}
    g  \in N(x) + \partial (\varepsilon\psi)(x),
    \label{eq:optimality_peps}
\end{align}
under the regularity condition~\cite[Theorem 23.8]{rockafellar_ConvexAnalysis_1970}
\begin{equation}
\label{eq:regularity_relint}
\mathrm{relint}(\mathrm{dom}\,\psi) \cap Q \neq \emptyset. \tag{R}
\end{equation}
Note that the optimality condition~\eqref{eq:optimality_peps} is sufficient for optimality even without~\eqref{eq:regularity_relint}, and consequently our lower bounds on the probability of exact regularization continue to hold without regularity.

The optimality condition~\eqref{eq:optimality_peps} for $P_{\varepsilon}$ suggests that the Gaussian measure of \emph{shifted} cones should play a central role in the analysis. The following theorem (proved in \Cref{loc:body.proofs}) quantifies how shifting a cone by a vector $w$ changes its Gaussian measure. The effect depends on the alignment of $w$ with the cone and its dual. 
\begin{theorem}[Gaussian measure of shifted cones]
\label{loc:gaussian_measure_of_shifted_cones.statement}
For any cone $V  \subseteq \mathbb{R}^{d}$ with dual cone $V^*$, and any $w \in \mathbb{R}^d$, the Gaussian measure of the shifted cone $V+w$ satisfies
\begin{align*}
\gamma(V+w) &\in \left[\, \gamma(V) \exp\left( - \tfrac{1}{2}\|w\|^2 - \mathrm{dist}(w, -V^*)\sqrt{d} \right), \right. \\
&\quad \left.\ \, \gamma(V) \exp \left( - \tfrac{1}{2} \|\proj_{V^*} w\|^2 + \mathrm{dist}(w, V^*)\sqrt{ d } \right) \,\right].
\end{align*}
\end{theorem}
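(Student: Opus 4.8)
The plan is to turn the problem into a one-dimensional exponential-moment estimate for $g\sim\mathcal N(0,I_d)$. Translating the domain of integration and expanding the square, one gets the identity
\[
\gamma(V+w)=\int_{V}(2\pi)^{-d/2}e^{-\|y+w\|^2/2}\,dy = e^{-\|w\|^2/2}\,\E\big[\indicator_{\{g\in V\}}\,e^{-g\cdot w}\big],
\]
so both inequalities reduce to two-sided control of $\E[\indicator_{\{g\in V\}}e^{-g\cdot w}]$. If $\gamma(V)=0$ then $V$ is Lebesgue-null, hence so is its translate $V+w$, and both bounds hold trivially; so assume $\gamma(V)>0$ and write $\E[\,\cdot\mid g\in V\,]$ for the conditional expectation. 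Note that $V^*$ (and $-V^*$) is a closed convex cone, so the projections $\proj_{V^*}$ and $\proj_{-V^*}$ are well defined.

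For the lower bound I would split $w=u+(w-u)$ with $u:=\proj_{-V^*}w$, so $\|w-u\|=\mathrm{dist}(w,-V^*)$. Since $-u\in V^*$, every $g\in V$ satisfies $g\cdot u\le 0$, i.e.\ $e^{-g\cdot u}\ge 1$, whence $\E[\indicator_{\{g\in V\}}e^{-g\cdot w}]\ge\gamma(V)\,\E[e^{-g\cdot(w-u)}\mid g\in V]\ge\gamma(V)\exp\!\big(-(w-u)\cdot\E[g\mid g\in V]\big)$ by Jensen. The key point is that for a standard Gaussian the radius $\|g\|$ and direction $g/\|g\|$ are independent, and $\{g\in V\}$ depends only on the direction; hence conditioning does not change the law of $\|g\|$, so $\|\E[g\mid g\in V]\|\le\E[\|g\|\mid g\in V]=\E\|g\|\le\sqrt d$, and Cauchy--Schwarz yields the claimed lower bound. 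For the upper bound I would instead use $p:=\proj_{V^*}w$ and $r:=w-p$; the cone-projection (Moreau) identity gives $p\cdot r=0$, so $\|w\|^2=\|p\|^2+\|r\|^2$ and $\|r\|=\mathrm{dist}(w,V^*)$. For $g\in V$ we have $g\cdot p\ge 0$, hence $e^{-g\cdot w}=e^{-g\cdot p}e^{-g\cdot r}\le e^{-g\cdot r}\le e^{\|g\|\,\|r\|}$; using independence of radius and direction once more, $\E[\indicator_{\{g\in V\}}e^{\|g\|\,\|r\|}]=\gamma(V)\,\E[e^{\|r\|\,\|g\|}]$.

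The remaining ingredient, and the step I expect to need the most care, is the sharp exponential-moment bound $\E[e^{\lambda\|g\|}]\le\exp(\lambda\sqrt d+\lambda^2/2)$ for $\lambda\ge 0$: this follows because $g\mapsto\|g\|$ is $1$-Lipschitz, so Gaussian concentration makes $\|g\|-\E\|g\|$ sub-Gaussian with variance proxy $1$ (giving $\E[e^{\lambda(\|g\|-\E\|g\|)}]\le e^{\lambda^2/2}$), combined with $\E\|g\|\le\sqrt d$. Feeding this in, $\gamma(V+w)\le e^{-\|p\|^2/2-\|r\|^2/2}\gamma(V)\,e^{\|r\|\sqrt d+\|r\|^2/2}=\gamma(V)\exp(-\tfrac12\|\proj_{V^*}w\|^2+\mathrm{dist}(w,V^*)\sqrt d)$, where the two $e^{\pm\|r\|^2/2}$ factors cancel exactly; this exactness is precisely why the sharp sub-Gaussian constant (rather than a cruder one) is needed. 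The main obstacles are thus (i) invoking the Gaussian-Lipschitz concentration bound with the precise constant, and (ii) the bookkeeping around the Moreau orthogonality $\proj_{V^*}w\perp(w-\proj_{V^*}w)$ and the degenerate case $\gamma(V)=0$; the rest is the completing-the-square identity and routine manipulation.
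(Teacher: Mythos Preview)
Your proposal is correct. The lower-bound argument is essentially the paper's: both use Jensen on the conditional expectation after splitting off the component of $w$ in $-V^*=V^\circ$, and both invoke radius/direction independence of the Gaussian to get $\E[\|g\|\mid g\in V]=\E\|g\|\le\sqrt d$.

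Your upper bound, however, takes a genuinely different route. The paper changes the base measure to a scaled Gaussian $\phi^\kappa$ with variance $\kappa>1$, pulls out $\sup_{x\in V}\phi(x+w)/\phi^\kappa(x)$, lower-bounds the resulting quadratic $\inf_{x\in V}Q_w(x)$ via weak Fenchel duality, and finally optimizes over $\kappa$ (choosing $\kappa=1+\mathrm{dist}(w,V^*)/\sqrt d$). You instead reuse the Moreau split $w=p+r$ with $p=\proj_{V^*}w$ directly: drop the nonpositive term $-g\cdot p$, bound $-g\cdot r\le\|g\|\,\|r\|$, and then apply radius/direction independence once more together with the sharp Lipschitz MGF bound $\E[e^{\lambda\|g\|}]\le e^{\lambda\sqrt d+\lambda^2/2}$. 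The exact cancellation of the $e^{\pm\|r\|^2/2}$ factors that you flag does indeed hinge on the sub-Gaussian variance proxy for $\|g\|-\E\|g\|$ being exactly $1$; this is the standard Gaussian--Lipschitz concentration inequality (Herbst argument from the log-Sobolev inequality, or Pisier's argument), so there is no gap. Your route is shorter and pleasantly symmetric with the lower bound, at the price of importing the Gaussian--Lipschitz MGF inequality as a black box; the paper's change-of-measure/duality argument is longer but entirely self-contained.
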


These bounds immediately yield the following simplified estimates.
\begin{corollary}[Simpler measure of shifted cones]
\label{loc:simpler_measure_of_shifted_cones.statement}
For any cone $V  \subseteq \mathbb{R}^{d}$ and any $w \in \mathbb{R}^d$,
\begin{align*}
\gamma(V+w) &\in \left[\, \gamma(V) \exp\left( -\tfrac{1}{2}\norm{w}^2 - \norm{w}\sqrt{d} \right), \right. \\
&\quad \left.\ \, \gamma(V) \exp \left(\norm{w}\sqrt{ d } \right) \,\right].
\end{align*}
\end{corollary}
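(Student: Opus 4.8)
The plan is to read off both inequalities directly from \Cref{loc:gaussian_measure_of_shifted_cones.statement} by discarding or crudely bounding the terms that make the sharp version geometry-dependent. The single observation driving everything is that a dual cone $V^*$ is closed, convex, and contains the origin, so that both $V^*$ and $-V^*$ contain $0$; hence $\mathrm{dist}(w, V^*) \le \|w - 0\| = \|w\|$ and likewise $\mathrm{dist}(w, -V^*) \le \|w\|$ for every $w \in \reals^d$.

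For the lower bound, I would start from the left endpoint in \Cref{loc:gaussian_measure_of_shifted_cones.statement}, namely $\gamma(V+w) \ge \gamma(V)\exp\!\left(-\tfrac12\|w\|^2 - \mathrm{dist}(w,-V^*)\sqrt d\right)$, and substitute $\mathrm{dist}(w,-V^*) \le \|w\|$ together with monotonicity of $\exp$; this yields $\gamma(V+w) \ge \gamma(V)\exp\!\left(-\tfrac12\|w\|^2 - \|w\|\sqrt d\right)$, which is the claimed lower bound. For the upper bound, I would start from the right endpoint $\gamma(V+w) \le \gamma(V)\exp\!\left(-\tfrac12\|\proj_{V^*}w\|^2 + \mathrm{dist}(w,V^*)\sqrt d\right)$, drop the nonpositive term $-\tfrac12\|\proj_{V^*}w\|^2 \le 0$, and then replace $\mathrm{dist}(w,V^*)$ by the larger quantity $\|w\|$, giving $\gamma(V+w) \le \gamma(V)\exp\!\left(\|w\|\sqrt d\right)$.

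There is essentially no obstacle here: the corollary is a one-line weakening of the theorem, and the only thing to be careful about is the convention that the dual cone contains the origin (which is immediate from its definition $V^* = \{y \mid y\cdot x \ge 0 \text{ for all } x \in V\}$, since $0$ satisfies this vacuously-strong inequality). I would state the proof in two short sentences, one per bound, citing \Cref{loc:gaussian_measure_of_shifted_cones.statement} and the inequalities $\mathrm{dist}(w,\pm V^*)\le\|w\|$, and note in passing that these simpler estimates lose the alignment-sensitivity of the sharp bounds but suffice for the coarse scaling results that follow.
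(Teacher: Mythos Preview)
Your proposal is correct and matches the paper's own treatment: the paper does not give a separate proof but simply states that the corollary follows ``immediately'' from \Cref{loc:gaussian_measure_of_shifted_cones.statement}, and your observation that $0\in \pm V^*$ (hence $\mathrm{dist}(w,\pm V^*)\le\|w\|$) together with dropping the nonpositive projection term is exactly the intended one-line weakening.
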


\subsubsection{Exact regularization phenomenon}

We define the \emph{exact regularization event} $\ER(\varepsilon)$ as the event where the unique solution $z^\star$ of $P_0$ also solves $P_\varepsilon$.
The implications of this event extend beyond the specific parameter value. It follows from the arguments of \citet[Theorem 2.1(d)]{friedlander_ExactRegularizationConvex_2008} that whenever $\ER(\varepsilon)$ occurs, $z^*$ uniquely solves $P_{\varepsilon'}$ for all $\varepsilon'\in [0, \varepsilon)$.
Consequently, the event $\ER(\varepsilon)$ implies that \emph{exact regularization} satisfies $\overline{\varepsilon} \geq \varepsilon$ as per~\eqref{eq:eps_threshold}.

The \emph{inner cone} and \emph{margin} sets, respectively, characterize the success and failure of exact regularization conditional on $\{ z^\star =z \}$ for a vertex $z\in\vrt(Q)$:
\begin{align}\label{eq:inner_cone_grad}
g &\in N(z) \cap [N(z) + \partial(\varepsilon\psi)(z)]\quad \text{(inner cone;  $\mathrm{ER}(\varepsilon)$ succeeds)},\\
g &\in N(z) \setminus [N(z) + \partial(\varepsilon\psi)(z)]\quad \text{(margin set; $\mathrm{ER}(\varepsilon)$ fails)}.\label{eq:margin}
\end{align}
The inner cone represents vectors in $N(z)$ that remain in the normal cone after regularization. On the other hand, the \emph{margin} set captures vectors in $N(z)$ that exit the normal cone under regularization
(see \cref{fig:pubfig:250713:membership:representer.pdf}). 

To compute the probability of exact regularization, observe that
\begin{equation}\label{eq:pr-exact}
\begin{aligned}
\prob(\ER(\varepsilon)) &=\textstyle \sum_{z  \in \vrt(Q)} \prob(\{z^\star=z\} \text{ and }
\ER(\varepsilon))\\
&= \textstyle \sum_{z  \in \vrt(Q)} \prob(g  \in N(z)  \cap [N(z) + \partial(\varepsilon \psi)(z)])\\
&= \textstyle \sum_{z  \in \vrt(Q)} \gamma(N(z)  \cap [N(z) + \partial(\varepsilon \psi)(z)]).
\end{aligned}
\end{equation}
This is formalized in the following theorem.

\begin{theorem}[Probability and normal cone geometry]
\label{loc:geometric_probabilistic_results.statement}
If $Q$ is a polytope and $\psi:\mathbb{R}^n \to (-\infty, \infty]$ is a proper, convex function that satisfies \eqref{eq:regularity_relint}, then~\eqref{eq:pr-exact} holds.
\end{theorem}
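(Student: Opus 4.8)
The plan is to reduce~\eqref{eq:pr-exact} to the deterministic optimality characterizations~\eqref{eq:optimality_p0} and~\eqref{eq:optimality_peps}, the almost-sure uniqueness supplied by \Cref{loc:uniqueness_of_solution_in_a_random_linear_program.statement}, and the definition $\gamma(B)=\prob(g\in B)$. The three displayed equalities will be established in turn: first split $\ER(\varepsilon)$ into disjoint pieces according to which vertex solves $P_0$; then rewrite each piece as an event about $g$ lying in an explicit intersection of cones; then pass from probabilities to Gaussian measures termwise.

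For the first equality I would use that, since $Q$ is a polytope, \Cref{loc:uniqueness_of_solution_in_a_random_linear_program.statement} gives a unique vertex solution $z^\star\in\vrt(Q)$ with probability $1$, with the events $E_z:=\{z^\star=z\}$ pairwise disjoint and $\sum_{z\in\vrt(Q)}\prob(E_z)=1$. Because $\vrt(Q)$ is finite and $\ER(\varepsilon)$ is by definition supported on the full-probability set on which $z^\star$ exists, one obtains $\prob(\ER(\varepsilon))=\sum_{z\in\vrt(Q)}\prob(E_z\cap\ER(\varepsilon))$, with no convergence issue since the sum is finite.

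For the second equality, fix $z\in\vrt(Q)$. On $E_z$ the event $\ER(\varepsilon)$ is, by definition, the event that $z$ itself solves $P_\varepsilon$, which by~\eqref{eq:optimality_peps} --- valid here because~\eqref{eq:regularity_relint} holds --- is exactly $\{g\in N(z)+\partial(\varepsilon\psi)(z)\}$. The remaining point is to replace $E_z$ by $\{g\in N(z)\}$ inside the probability: by~\eqref{eq:optimality_p0} one has $E_z\subseteq\{g\in N(z)\}$, and conversely $\{g\in N(z)\}\setminus E_z$ is contained in the union of the null event $\{z^\star\text{ undefined}\}$ with $\bigcup_{z'\in\vrt(Q)\setminus\{z\}}\big(N(z)\cap N(z')\big)$, a finite union which by \Cref{loc:normal_cones_and_faces.statement}(1)--(2) consists of cones of dimension $<n$, hence is $\gamma$-null. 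So $E_z$ and $\{g\in N(z)\}$ differ by a $\gamma$-null set, giving $\prob(E_z\cap\ER(\varepsilon))=\prob\big(g\in N(z)\cap[N(z)+\partial(\varepsilon\psi)(z)]\big)$; summing over $z$ yields the claim.

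The third equality is immediate once one notes $N(z)\cap[N(z)+\partial(\varepsilon\psi)(z)]$ is measurable (it is the intersection of the polyhedral cone $N(z)$ with a convex Minkowski sum), so $\prob(g\in B)=\gamma(B)$ applies to each summand. I expect the main obstacle to be the bookkeeping in the second equality: justifying that, inside a probability, conditioning on $\{z^\star=z\}$ can be replaced by the raw membership $\{g\in N(z)\}$. This is exactly the place where the polytope hypothesis (finite $\vrt(Q)$, almost-sure uniqueness), \Cref{loc:normal_cones_and_faces.statement} (lower-dimensional, hence $\gamma$-null, overlaps of normal cones), the regularity condition~\eqref{eq:regularity_relint} (needed for the ``only if'' direction of~\eqref{eq:optimality_peps}), and absolute continuity of $\gamma$ must all be combined correctly; everything else is routine.
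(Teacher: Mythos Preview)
Your proposal is correct and follows essentially the same approach as the paper. The paper does not give a separate detailed proof of this theorem; it presents the three-line chain~\eqref{eq:pr-exact} in the text, relying implicitly on \Cref{loc:uniqueness_of_solution_in_a_random_linear_program.statement}, the optimality conditions~\eqref{eq:optimality_p0} and~\eqref{eq:optimality_peps}, and the $\gamma$-nullity of normal-cone overlaps from \Cref{loc:normal_cones_and_faces.statement}, and then states the theorem as a formalization. Your write-up simply makes these implicit steps explicit---in particular, the care you take in the second equality (that $\{z^\star=z\}$ and $\{g\in N(z)\}$ differ only by a $\gamma$-null set) is exactly the point the paper leaves to the reader.
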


Note that quantifying the probability of $\ER(\varepsilon)$ in terms of $\varepsilon$ gives, using standard arguments, the expectation of the regularization threshold $\overline{\varepsilon}$:
\begin{align}
    \mathbb{E}\,\overline{\varepsilon} =\textstyle \int_{0}^\infty \mathbb{P}(\ER(\varepsilon))\,d\varepsilon.
    \label{eq:expectation_bound}
\end{align}
Because both $N(z)$ and $N(z)+ \partial(\varepsilon \psi)(z)$ are closed convex sets (arising as subdifferentials of convex functions), their interesection is indeed Borel measureable and thus $\ER(\varepsilon)$ is a valid event in a probability space.

\subsubsection{Extensions and remarks}

\Cref{loc:geometric_probabilistic_results.statement} allows for $\partial (\varepsilon \psi)(x) = \emptyset$ at vertices $x$ of $Q$. The Minkowski sum is then empty, and the Gaussian measure of the empty set is zero.
This occurs when $\psi$ has no affine minorant at $z^\star$.
For any $\varepsilon > 0$, the solution of $P_\varepsilon$
cannot match that of $P_0$, and the probability of exact regularization is zero. 
An empty subdifferential can result from a discontinuity of $\psi$ at $z^\star$. 
An example of an empty subdifferential is that of the entropic regularization on the simplex. Indeed, the (negative) entropy function $\psi(x)= \sum_{j=1}^n x_j \log x_j$ has unbounded gradients when approaching any of the vertices of the simplex. \Cref{loc:geometric_probabilistic_results.statement} confirms that exact regularization never occurs for entropic regularization on the simplex.

Beyond exact regularization, our theory admits two alternative interpretations. First, taking $\psi(x) = p \cdot x$ yields the linear program $P_\varepsilon$ with cost vector $-g + \varepsilon p$. Our results then provide probabilistic guarantees for the robustness of $P_0$ under cost vector perturbations, which in effect characterizes properties of the dual solutions. Second, taking $\psi$ with an effective domain strictly contained in $Q$ models a constraint or barrier function. Vertices outside $\dom(\psi)$ cannot be exactly regularized for any $\varepsilon \geq 0$. The exact regularization probability becomes the probability that solutions to a random linear program $P_0$ satisfy the constraint imposed by $\psi$.

\subsection{Summary of contributions}
\label{loc:introduction_to_exact_regularisation_with_proof.summary_of_contributions}

Our focus for the remainder of this work is to find good estimates on the Gaussian
measure of polyhedral inner cones $N(z)  \cap [N(z) + \partial (\varepsilon \psi)(z)]$ at vertices $z$.

\begin{enumerate}[leftmargin=*,label=\arabic*.,itemsep=0.2\baselineskip,topsep=0.25\baselineskip]

\item \emph{Gaussian measures of shifted cones.} In \Cref{loc:gaussian_measure_of_shifted_cones.statement}, we establish two-sided bounds on how the Gaussian measure of a cone changes under a shift. This result underlies our entire probabilistic analysis of exact regularization.

\item \emph{Geometric characterization}. In \Cref{loc:geometric_probabilistic_results.statement}, we show that the probability of exact regularization equals the sum of Gaussian measures of inner cones $N(z) \cap [N(z) + \partial(\varepsilon\psi)(z)]$ across all vertices of $Q$.

\item \emph{Inner-cone bounds}. We develop a comprehensive suite of bounds on the Gaussian measure of inner cones through membership conditions (\Cref{loc:gaussian_measure_of_the_margin_resulting_from_an_inward_shift.statement}, \Cref{loc:corollary_of_the_inward_shift_result.statement}) and generalize these results via representer vectors (\Cref{loc:bound_via_representer_vectors.statement}).

\item \emph{Margin sets}. \Cref{loc:body.main_results.using_the_boundary_of_normal_cones} relaxes the membership condition. \Cref{loc:gaussian_measure_of_the_margin.statement} provides two-sided bounds on the Gaussian measure of margin sets $T \setminus [T+w]$ for polyhedral cones $T$ via facet structure. Then follows \Cref{loc:applied_margin_measure_for_exact_regularization.statement}, a general bound
on the probability of exact regularization, and \Cref{loc:linear_perturbed_polytope_regularization.statement}, a simpler bound specialized to linear regularization.

\item \emph{Applications to $\Binf$}. \Cref{loc:body.b_infty_ball_constraint} instantiates the theoretical bounds for the $\infty$-ball constraint $Q=\Binf$. For quadratic regularization $\psi = \frac{1}{2}\norm{\cdot}^2$, we prove optimality of \Cref{loc:corollary_of_the_inward_shift_result.statement} and \Cref{loc:applied_margin_measure_for_exact_regularization.statement} up to a multiplicative constant, as confirmed by the lower bound in \Cref{loc:tightness_of_the_loo_ball_case.statement}. For linear regularization $\psi(x) = x \cdot p$, the margin-based approach yields tighter bounds for sparse vectors. We also apply the bounds to regularized optimal transport.
\end{enumerate}

Proofs of all mathematical statements in this paper appear in \Cref{loc:body.proofs}; auxiliary results appear in \Cref{sec:aux_proofs}.

\subsection{Recent work}
\label{loc:introduction_to_exact_regularisation_with_proof.recent_work}

Regularized optimal transport~\cite{dessein_RegularizedOptimalTransport_2018} has renewed interest in the regularization of LPs. Entropically-regularized optimal transport gained widespread adoption when \citet{cuturi_SinkhornDistancesLightspeed_2013} demonstrated that the Sinkhown algorithm would then apply and carried substantial computational advantages over the unregularized problem. \citet{cominetti_AsymptoticAnalysisExponential_1994} and \citet{weed_ExplicitAnalysisEntropic_2018} analyzed the convergence of entropyically regularized LPs as $\varepsilon \downarrow 0$. 

Entropic optimal transport produces \emph{dense} solutions that differ from the sparse solutions of the unregularized problem. Quadratic regularizers, by contrast, preserve sparsity, which makes quadratically regularized optimal transport~\cite{blondel_SmoothSparseOptimal_2018} an attractive alternative. \citet{nutz_QuadraticallyRegularizedOptimal_2025} and \citet{gonzalez-sanz_MonotonicityQuadraticallyRegularized_2025} provide an analysis of the sparsity properties of this approach. 

\citet{gonzalez-sanzQuantitativeConvergenceQuadratically2025} establish exact regularization in quadratically regularized optimal transport, a stronger property than sparsity preservation. Following \citet{weed_ExplicitAnalysisEntropic_2018}, they characterize the threshold $\overline{\varepsilon}$ for quadratically regularized LPs as 
\begin{equation*}
\frac{1}{\overline{\varepsilon}} = 2 \, \max
\Set{\frac{x^\psi\cdot(x^\psi-x)}{g\cdot (x^\psi-x)} | x \in \vrt(Q)\backslash\mathrm{Sol}(P^\psi)},
\end{equation*}
where $x^\psi$ denotes the unique solution to the selection problem $P^\psi$ with $\psi=\frac{1}{2}\|\cdot\|^2$. This characterization requires an exhastive search over all vertices, which is computationally prohibitive. 

\citet{gonzalez-sanzQuantitativeConvergenceQuadratically2025} also derived a computable bound
\begin{equation}
\label{eq:gap_based_bound}
\overline{\varepsilon} \geq \Delta/(2BD)
\end{equation}
where $B=\sup_{x \in Q}\|x\|$, $D=\sup_{x,x' \in Q} \|x-x'\|$, and 
\begin{equation}
\label{eq:suboptimality_gap}
  \Delta = \min\Set{-g\cdot x| x \in \vrt(Q)\backslash\mathrm{Sol}(P_{0})} \,-\,
  p^*
\end{equation}
denotes the suboptimality gap. This bound, however, proves loose in the average case. Consider the problems $P_{0}$ and $P_{\varepsilon}$ with standard normal cost vector $-g$ and quadratic regularizer $\psi=\frac{1}{2}\|\cdot\|^2$, solved over $Q=\Binf$. The suboptimality gap here simplifies to $\Delta= 2 \min_{j \in[n]} |g_{j}|$ almost surely, with $c/n \leq \mathbb{E}\Delta \leq C/n $ for absolute constants $c,C >0$.
Because $B=\sqrt{ n }$ and $D=2\sqrt{ n }$, bound~\eqref{eq:gap_based_bound} yields
\begin{equation*}
\mathbb{E}\,\overline{\varepsilon} \geq  c/(4n^2).
\end{equation*}
Our results (see \cref{loc:corollary_of_the_inward_shift_result.statement}), by contrast, yield 
\begin{equation*}
\mathbb{E}\,\overline{\varepsilon} \geq 1/(4n),
\end{equation*}
which improves the bound by a factor of $n$, and is optimal up to constants (see \cref{loc:tightness_of_the_loo_ball_case.statement}).

Exact regularization has been studied extensively in linear inverse problems, particularly for sparse and low-rank recovery. 
\citet{lai_AugmentedL1NuclearNorm_2013}, \citet{schopfer_ExactRegularizationPolyhedral_2012}, and \citet{zhang_LowerBoundGuaranteeing_2011} obtained explicit, computable estimates for the regularization threshold $\overline{\varepsilon}$ in these settings.
These results require a ground truth $x_{0}$ with specific structure (sparsity or low rank). The sparsity levels they assume, however, are far lower than typical of random basis pursuit instances. In contrast, we make no a priori assumptions on the solution set. Our average-case analysis of exact regularization therefore addresses a fundamentally different regime and uses different techniques.

\section{Inner cone bounds}
\label{loc:body.main_results}

The inner cone $N(z)\cap[N(z)+\partial(\varepsilon\psi)(z)]$, which appears in  \Cref{loc:geometric_probabilistic_results.statement}, simplifies under the \emph{membership condition} $N(z)\cap\partial(\varepsilon\psi)(z)\ne\emptyset$. Indeed, if $w\in N(z)\cap\partial(\varepsilon\psi)(z)$ then $N(z) \cap[N(z)+w] = N(z)+w$ by convexity of normal cones; see \Cref{fig:pubfig:250713:membership:representer.pdf}, left panel. \Cref{loc:body.main_results.a_simple_bound_under_a_membership_condition} leverages \Cref{loc:gaussian_measure_of_shifted_cones.statement} to obtain simple, concrete bounds when this membership condition holds. These bounds apply to interesting problems such as quadratically regularized optimal transport.

Generalizing beyond the restrictive membership condition requires further tools. \Cref{loc:body.main_results.bounds_via_representer_vectors} introduces the notion of \emph{representer vectors}. For a nondegenerate vertex $z$, we show that one can replace $w\notin N(z)$ with a different \emph{representer} vector $\tilde{w} \in N(z)$ that produces the same inner cone \eqref{eq:inner_cone_grad}; see \cref{fig:pubfig:250713:membership:representer.pdf}, right panel. Representer vectors allow us to port results from \cref{loc:body.main_results.a_simple_bound_under_a_membership_condition}.

\begin{figure}[t]
\centering
\includegraphics[width=0.4\textwidth]{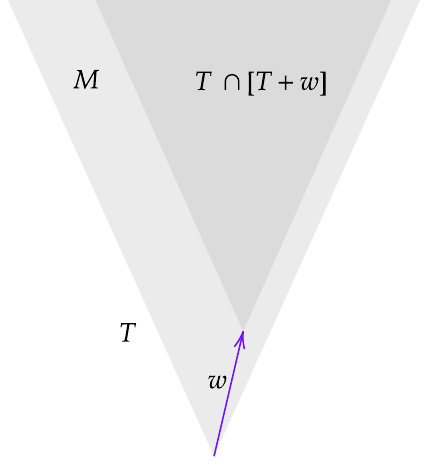}
\includegraphics[width=0.4\textwidth]{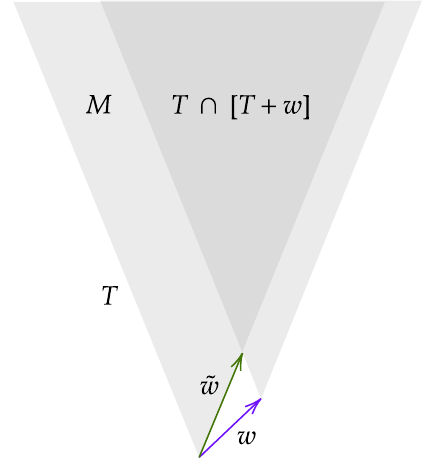}
\caption{ \textbf{The inner cone, margin, membership condition, and representer vectors}. Given a normal cone $T$ and a vector $w$, the corresponding inner cone $T \cap [T+w]$ coincides with the shifted cone $T+w$ when the membership condition $w\in T$ holds (left panel). When the membership condition fails (right panel), we can construct a ``representer" vector $\tilde{w}\in T$ so that the inner cone equals $T+\tilde{w}$. The margin $M(T,w)$ is the set of vectors that exit $T$ under the translation $w$.
\label{fig:pubfig:250713:membership:representer.pdf}}
\end{figure}

\subsection{A simple bound under a membership condition}
\label{loc:body.main_results.a_simple_bound_under_a_membership_condition}

We establish lower and upper bounds on the Gaussian measure of inner cones. The lower bound only requires convexity of $\psi$; the upper bound requires differentiability.

\begin{theorem}[Gaussian measure of an inner cone under the membership condition]
\label{loc:gaussian_measure_of_the_margin_resulting_from_an_inward_shift.statement}
Let $z \in \vrt(Q)$, $\varepsilon \geq 0$, and $T=N(z)$. For any subgradient $w \in \partial (\varepsilon\psi)(z) \cap T$,
\begin{equation*}
\gamma(T \cap [T + \partial (\varepsilon\psi)(z)]) \geq \gamma(T)\exp\left( - \tfrac{1}{2}\|w\|^2 - \|w\|\sqrt{n} \right).
\end{equation*}
If $\psi$ is differentiable at $z$, then
\begin{equation*}
\gamma(T \cap [T +\varepsilon \nabla \psi(z)]) \leq \gamma(T) \exp \left( - \tfrac{1}{2} \|\proj_{T^*} \,\varepsilon\nabla \psi(z)\|^2 + \mathrm{dist}(\varepsilon\nabla \psi(z), T^*)\sqrt{ n } \right).
\end{equation*}
\end{theorem}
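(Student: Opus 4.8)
The plan is to sandwich the inner cone $T\cap[T+\partial(\varepsilon\psi)(z)]$ between two translates of $T$ and then quote the shifted–cone estimates of \Cref{loc:gaussian_measure_of_shifted_cones.statement} and \Cref{loc:simpler_measure_of_shifted_cones.statement}. The only structural fact I need is that, because $z$ is a vertex, $T=N(z)$ is a full–dimensional \emph{closed convex} cone, hence closed under addition: $T+T\subseteq T$, and in particular $T+v\subseteq T$ for every $v\in T$.

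\emph{Lower bound.} I would fix $w\in\partial(\varepsilon\psi)(z)\cap T$ and record two inclusions: since $w\in T$, the translate satisfies $T+w\subseteq T$; since $w\in\partial(\varepsilon\psi)(z)$, also $T+w\subseteq T+\partial(\varepsilon\psi)(z)$. Intersecting gives $T+w\subseteq T\cap[T+\partial(\varepsilon\psi)(z)]$, so monotonicity of $\gamma$ yields $\gamma(T\cap[T+\partial(\varepsilon\psi)(z)])\ge\gamma(T+w)$. Applying the lower bound of \Cref{loc:simpler_measure_of_shifted_cones.statement} with $V=T$ and $d=n$ gives $\gamma(T+w)\ge\gamma(T)\exp(-\tfrac12\|w\|^2-\|w\|\sqrt n)$, which is the claim.

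\emph{Upper bound.} When $\psi$ is differentiable at $z$, differentiability forces $z\in\interior(\dom\psi)$, so $\partial(\varepsilon\psi)(z)=\{\varepsilon\nabla\psi(z)\}$ — including the case $\varepsilon=0$, where the paper's convention makes $\partial(0\cdot\psi)(z)=\partial\delta_{\dom\psi}(z)=\{0\}=\{0\cdot\nabla\psi(z)\}$. I would then use the trivial inclusion $T\cap[T+\varepsilon\nabla\psi(z)]\subseteq T+\varepsilon\nabla\psi(z)$, giving $\gamma(T\cap[T+\varepsilon\nabla\psi(z)])\le\gamma(T+\varepsilon\nabla\psi(z))$, and bound the right side by the upper estimate of \Cref{loc:gaussian_measure_of_shifted_cones.statement} with $V=T$ (whose dual cone is $T^*$), $d=n$, and shift vector $w=\varepsilon\nabla\psi(z)$:
\[
\gamma(T+\varepsilon\nabla\psi(z))\le\gamma(T)\exp\!\left(-\tfrac12\|\proj_{T^*}\varepsilon\nabla\psi(z)\|^2+\mathrm{dist}(\varepsilon\nabla\psi(z),T^*)\sqrt n\right).
\]

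\emph{Main obstacle.} There is no deep step: the entire difficulty is already discharged inside \Cref{loc:gaussian_measure_of_shifted_cones.statement}, which is used here verbatim as a black box. The two points needing a line of care are (i) the inclusion $T+v\subseteq T$ for $v\in T$, which genuinely uses convexity of the cone (a non-convex cone can fail it) together with $z\in\vrt(Q)$; and (ii) identifying $\partial(\varepsilon\psi)(z)$ with the singleton $\{\varepsilon\nabla\psi(z)\}$ in the boundary case $\varepsilon=0$, so that the displayed upper bound is literally correct under the scalar–multiplication convention adopted in the paper. Measurability of all sets in sight is already settled in the surrounding text (closed convex intersections are Borel), so $\gamma$ is well defined throughout.
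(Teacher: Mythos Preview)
Your proposal is correct and follows essentially the same route as the paper: for the lower bound you use $T+w\subseteq T\cap[T+\partial(\varepsilon\psi)(z)]$ (the paper writes this as $T\cap[T+\partial(\varepsilon\psi)(z)]\supseteq T\cap[T+w]=T+w$) and then invoke the shifted–cone estimate, while for the upper bound you use the trivial inclusion into $T+\varepsilon\nabla\psi(z)$ and apply \Cref{loc:gaussian_measure_of_shifted_cones.statement} directly. The only cosmetic difference is that the paper quotes \Cref{loc:gaussian_measure_of_shifted_cones.statement} for the lower bound and then simplifies via $\mathrm{dist}(-w,T^*)=\|w\|$ for $w\in T$, whereas you go straight to \Cref{loc:simpler_measure_of_shifted_cones.statement}; also, your aside about the $\varepsilon=0$ convention is extra care the paper omits, and the ``full-dimensional'' qualifier is not needed for $T+T\subseteq T$, which holds for any convex cone.
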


The following corollary provides explicit bounds on both the failure probability of exact regularization and the expected regularization threshold $\mathbb{E}\,\overline{\varepsilon}$.

\begin{corollary}[Failure probability under the membership condition]
\label{loc:corollary_of_the_inward_shift_result.statement}
Suppose \cref{eq:regularity_relint} holds and $\varepsilon \geq 0$. Suppose that for each vertex $z \in \vrt(Q)$, we have $\partial \psi(z)\cap N(z) \neq \emptyset$. Choose any $v_{z}\in\partial \psi(z)\cap N(z)$ for each vertex, and define $B = \max\set{ \|v_{z}\| | z \in \vrt(Q) }$. Then
\begin{equation*}
\mathbb{P}[\mathrm{ER}^c(\varepsilon)] \leq 1- \exp \left( -\tfrac{1}{2}\varepsilon^2 B^2 -\varepsilon B\sqrt{ n } \right) \leq \tfrac{1}{2}\varepsilon^2 B^2 + \varepsilon B \sqrt{ n }.
\end{equation*}
The regularization threshold $\overline{\varepsilon}$ satisfies
\begin{equation*}
\mathbb{E}\,\overline{\varepsilon}
\geq \frac{1-e^{ -4n }}{2B\sqrt{ n }}.
\end{equation*}
\end{corollary}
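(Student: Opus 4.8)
The plan is to funnel everything through a single lower bound on $\mathbb{P}(\mathrm{ER}(\varepsilon))$. By \Cref{loc:geometric_probabilistic_results.statement} (applicable since $Q$ is a polytope, $\psi$ is proper convex, and \eqref{eq:regularity_relint} holds), $\mathbb{P}(\mathrm{ER}(\varepsilon)) = \sum_{z\in\vrt(Q)} \gamma\big(N(z)\cap[N(z)+\partial(\varepsilon\psi)(z)]\big)$, so it suffices to bound each summand from below. Fix a vertex $z$, write $T=N(z)$, and consider the vector $\varepsilon v_z$. Since $\partial\psi(z)\ne\emptyset$ forces $z\in\dom\psi$, one checks that $\varepsilon v_z\in\partial(\varepsilon\psi)(z)$ for every $\varepsilon\ge 0$ (for $\varepsilon>0$ because $\partial(\varepsilon\psi)(z)=\varepsilon\,\partial\psi(z)$; for $\varepsilon=0$ because $0\in\partial\delta_{\dom\psi}(z)$), and $\varepsilon v_z\in T$ because $v_z\in N(z)$ and $N(z)$ is a cone. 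Hence $\varepsilon v_z\in\partial(\varepsilon\psi)(z)\cap T$, and the membership-condition estimate of \Cref{loc:gaussian_measure_of_the_margin_resulting_from_an_inward_shift.statement} applies with $w=\varepsilon v_z$.

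This yields $\gamma(T\cap[T+\partial(\varepsilon\psi)(z)])\ge \gamma(N(z))\exp\!\big(-\tfrac12\varepsilon^2\|v_z\|^2-\varepsilon\|v_z\|\sqrt n\big)$. Because $t\mapsto\exp(-\tfrac12\varepsilon^2 t^2-\varepsilon t\sqrt n)$ is nonincreasing on $[0,\infty)$ and $\|v_z\|\le B$ for all $z$, the right-hand side is at least $\gamma(N(z))\exp(-\tfrac12\varepsilon^2 B^2-\varepsilon B\sqrt n)$. Summing over $z\in\vrt(Q)$ and using $\sum_{z\in\vrt(Q)}\gamma(N(z))=1$ (\Cref{loc:uniqueness_of_solution_in_a_random_linear_program.statement}, equivalently part 3 of \Cref{loc:normal_cones_and_faces.statement} together with the fact that $\gamma$ vanishes on lower-dimensional sets) gives
\[
\mathbb{P}(\mathrm{ER}(\varepsilon))\ \ge\ \exp\!\big(-\tfrac12\varepsilon^2 B^2-\varepsilon B\sqrt n\big).
\]
Passing to complements yields the first inequality of the corollary, and the elementary bound $1-e^{-x}\le x$ for $x\ge 0$, with $x=\tfrac12\varepsilon^2 B^2+\varepsilon B\sqrt n$, yields the second.

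For the threshold bound, combine $\mathrm{ER}(\varepsilon)\subseteq\{\overline\varepsilon\ge\varepsilon\}$ with \eqref{eq:expectation_bound} to get $\mathbb{E}\,\overline\varepsilon=\int_0^\infty\mathbb{P}(\mathrm{ER}(\varepsilon))\,d\varepsilon\ \ge\ \int_0^{2\sqrt n/B}\mathbb{P}(\mathrm{ER}(\varepsilon))\,d\varepsilon$, and then apply the display above. On $[0,2\sqrt n/B]$ one has $\varepsilon B\le 2\sqrt n$, so $\tfrac12\varepsilon^2 B^2=\tfrac12(\varepsilon B)(\varepsilon B)\le \varepsilon B\sqrt n$ and hence $\tfrac12\varepsilon^2 B^2+\varepsilon B\sqrt n\le 2\varepsilon B\sqrt n$; therefore
\[
\mathbb{E}\,\overline\varepsilon\ \ge\ \int_0^{2\sqrt n/B} e^{-2\varepsilon B\sqrt n}\,d\varepsilon\ =\ \frac{1-e^{-4n}}{2B\sqrt n}.
\]
I do not expect a genuine obstacle here: the argument is a short assembly of \Cref{loc:geometric_probabilistic_results.statement} and \Cref{loc:gaussian_measure_of_the_margin_resulting_from_an_inward_shift.statement}. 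The only points that require a little care are (i) verifying that $\varepsilon v_z\in\partial(\varepsilon\psi)(z)\cap N(z)$ uniformly in $\varepsilon\ge 0$, which uses the $\varepsilon=0$ convention for $\varepsilon\psi$; (ii) the identity $\sum_z\gamma(N(z))=1$, which relies on $Q$ being a polytope; and (iii) choosing the truncation level $2\sqrt n/B$ precisely so that the quadratic term in the exponent is absorbed by the linear term, leaving an elementary integral with the stated value.
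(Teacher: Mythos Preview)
Your proof is correct and follows essentially the same route as the paper: apply \Cref{loc:gaussian_measure_of_the_margin_resulting_from_an_inward_shift.statement} with $w=\varepsilon v_z$ (verifying membership in $\partial(\varepsilon\psi)(z)\cap N(z)$ separately for $\varepsilon>0$ and $\varepsilon=0$), sum over vertices via \Cref{loc:geometric_probabilistic_results.statement} and $\sum_z\gamma(N(z))=1$, then for the expectation bound truncate the integral at $2\sqrt{n}/B$ so the quadratic term is dominated by the linear one. The paper's argument is the same in structure and detail.
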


\subsubsection{Quadratic regularization and inscribed polytopes}
\label{loc:corollary_of_the_inward_shift_result.example}
The following example satisfies the assumptions of \Cref{loc:corollary_of_the_inward_shift_result.statement}. Given distinct points $z_{1}, \dots, z_{K}$ on the (centered) sphere $\rho\,\mathbb{S}^{n-1}$ of radius $\rho>0$, consider the inscribed polytope $Q=\conv \{ z_{1},\dots,z_{K} \}$. Let $\psi= \frac{1}{2}\|\cdot\|^2$ be the quadratic regularizer. Then, at any vertex $z_{k}$, optimality requires $\nabla \psi(z_{k})=z_{k} \in N_{Q}(z_{k})$, so that the membership condition holds. By construction, $B=\max_{k\in[K]}\|\nabla \psi(z_{k})\|=\rho$, and the bound from \Cref{loc:corollary_of_the_inward_shift_result.statement} applies. Somewhat surprisingly, the bound does not depend on the number of vertices $K$.

The $1$-ball and the $\infty$-ball in $\mathbb{R}^n$ are covered by this example, with $\rho=1$ and $\rho=\sqrt{ n }$, respectively. The \emph{Birkhoff polytope} of doubly stochastic matrices
\begin{equation*}
Q = \Set{ X \in \mathbb{R}^{d\times d} | X\mathbf{e}=\mathbf{e},\, X^T\mathbf{e}=\mathbf{e},\,X \geq 0 },
\end{equation*}
where $\mathbf{e}=(1,\dots,1) \in \mathbb{R}^d$, also satisfies the conditions of \Cref{loc:corollary_of_the_inward_shift_result.statement}. This yields bounds for \emph{quadratically regularized optimal transport} when we identify $\mathbb{R}^{d \times d}$ with $\mathbb{R}^{d^2}$. The Birkhoff--von Neumann theorem \citep[Example 0.12]{ziegler_LecturesPolytopes_1995} characterizes $Q$ as the convex hull of all $d \times d$ permutation matrices, each of which lies on the Frobenius sphere $\{ X \in \mathbb{R}^{d\times d} \mid \|X\|_{F} = \sqrt{ d } \}$ of radius $\rho=\sqrt{ d }$. Because each vertex satisifes the sphere constraint, \Cref{loc:corollary_of_the_inward_shift_result.statement} applies with $n=d^2$ and $B=\sqrt{ d }=n^{1/4}$, which yields
\begin{equation}
\label{eq:birkhoff_prob_bound}
\mathbb{P}(\mathrm{ER}^c(\varepsilon)) \leq 1 - \exp\left( -\tfrac{1}{2}\varepsilon^2\sqrt{ n } - \varepsilon n^{3/4} \right) \leq \tfrac{1}{2}\varepsilon^2 \sqrt{ n } + \varepsilon n^{3/4}
\end{equation}
and
\begin{equation}
\label{eq:birkhoff_expec_bound}
\mathbb{E}\,\overline{\varepsilon} \geq \frac{1-e^{ -4n }}{2n^{3/4}}.
\end{equation}
We validate these bounds empirically by solving $P_{0}$ and $P_{\varepsilon}$ over the Birkhoff polytope with quadratic regularization for 20 independent realizations of $g \sim N(0,I_{n})$ across various pairs $(n, \varepsilon)$. The left panel of \Cref{fig:pubfig:250731:birkhoff_prob_expec.pdf} displays the empirical failure probability  $\mathbb{P}(\mathrm{ER}^c(\varepsilon))$ as a gray scale over $n$-$\varepsilon$ axes. The green dash-dot line $\varepsilon n^{3/4}=2$ represents an approximate level set of our bound~\eqref{eq:birkhoff_prob_bound}. Because $\frac{1}{2}\varepsilon^2\sqrt{ n }< 0.04 \cdot \varepsilon n^{3/4}$ holds over the tested values, the bound yields $\mathbb{P}(\mathrm{ER}^c(\varepsilon)) \leq 1-\exp(-2.08) \simeq 0.875$ on this line. The empirical probability on this line is approximately 0.5, consistent with the bound.

The right panel displays the expected threshold $\mathbb{E}\,\overline{\varepsilon}$, estimated by identifying the smallest tested $\varepsilon$ at which exact regularization fails for each realization of $g$, and averaging these values. We plot this empirical average of $\overline{\varepsilon}$ together with the 95\% confidence interval band. The green dash-dot line shows the lower bound~\eqref{eq:birkhoff_expec_bound}, without the negligible term $e^{ -4n }$. Both panels show that our bounds accurately capture the scaling relationship between $\varepsilon$ and $n$, observed as slopes in the log-scale plots.
\begin{figure}[t]
\centering
\includegraphics[width=\textwidth]{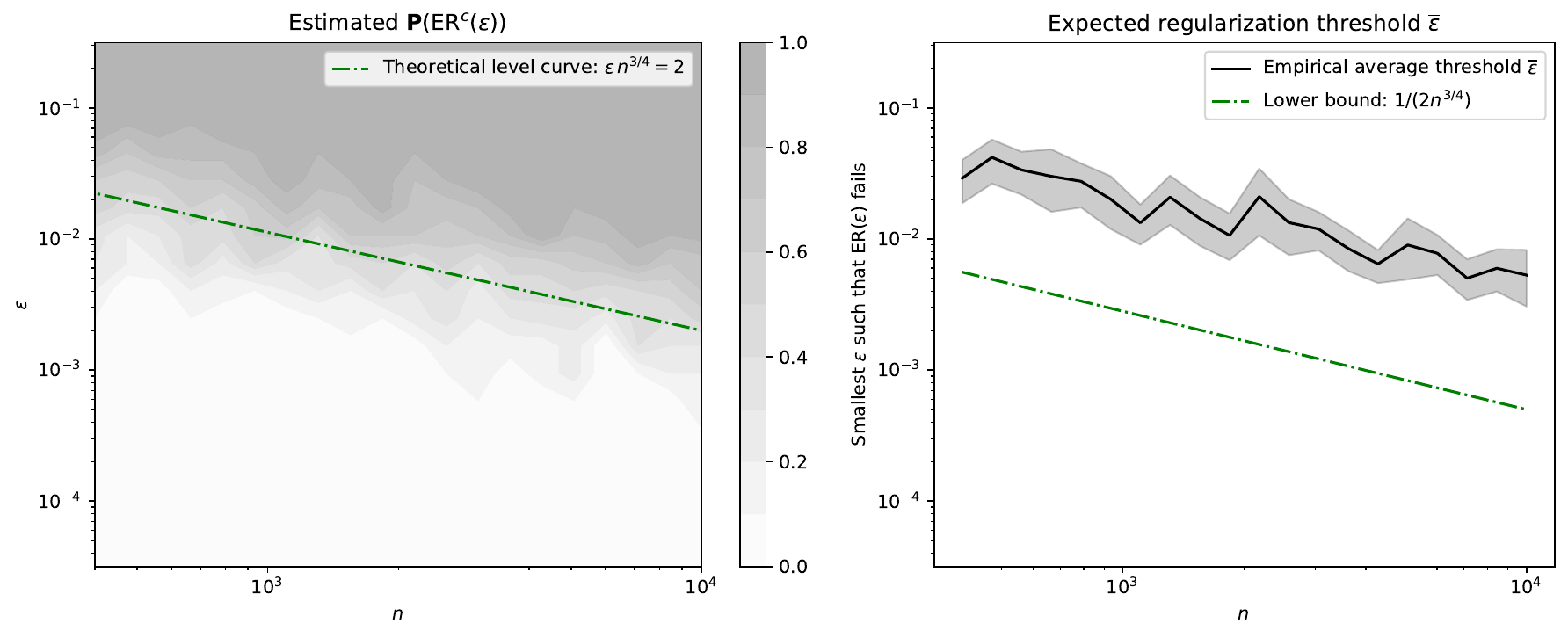}
\caption{\textbf{Probably exact regularization on the Birkhoff polytope}. The \textbf{left panel} displays the empirically failure probability of exact regularization as a colour scale over $n$-$\varepsilon$ axes, with the green dash-dot line showing the approximate level set $\varepsilon n^{3/4}=2$. The \textbf{right panel} displays the empirical average of the regularization threshold $\overline{\varepsilon}$ against $n$ with 95\% confidence intervals, together with our lower bound~\eqref{eq:birkhoff_expec_bound} as a green dash-dot line.
\label{fig:pubfig:250731:birkhoff_prob_expec.pdf}}
\end{figure}

\subsubsection{The soft phase transition}
\Cref{loc:gaussian_measure_of_the_margin_resulting_from_an_inward_shift.statement}
describes the order of $\varepsilon$ at which exact regularization
occurs. The phenomenon exhibits a soft phase transition:
exact regularization succeeds with high probability for sufficiently small
$\varepsilon$, and fails with high probability for
large $\varepsilon$. The next corollary makes this transition explicit.

\begin{corollary}[Soft phase transition]
\label{loc:basic_exact_regularisation_theorem.statement}
Let $\psi$ be differentiable at all $z  \in \vrt(Q)$ with $\nabla \psi(z) \in N(z)$.
If
\begin{equation*}
\varepsilon \leq \delta\left(2\sqrt{n}\max\set{\|\nabla \psi(z)\| | z \in \vrt(Q)}\right)^{-1},
\end{equation*}
then $\ER(\varepsilon)$ happens with probability at least $1-\delta$. If instead $\nabla \psi(z) \in N^\star(z)$ for all $z \in\vrt(Q)$  and
\begin{equation*}
\varepsilon  \ge \sqrt{2\log(1/\delta)}\cdot\left(\min\set{\|\nabla \psi(z)\| | z \in \vrt(Q)}\right)^{-1},
\end{equation*}
then $\mathrm{ER}^c(\varepsilon)$ happens with probability at least $1-\delta$.
\end{corollary}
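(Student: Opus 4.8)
The plan is to prove the two implications separately, each by combining the pointwise inner-cone estimates of \Cref{loc:gaussian_measure_of_the_margin_resulting_from_an_inward_shift.statement} and \Cref{loc:corollary_of_the_inward_shift_result.statement} with the cone-sum identity \eqref{eq:pr-exact}. Throughout I may assume $\delta\in(0,1)$, since both conclusions are vacuous for $\delta\ge 1$; in particular $\varepsilon>0$ in the second half, so the scalar-multiplication convention gives $\partial(\varepsilon\psi)(z)=\{\varepsilon\nabla\psi(z)\}$ at every vertex $z$. I will also record that differentiability of the proper convex function $\psi$ at a vertex $z$ forces $\partial\psi(z)$ to be a singleton, hence $z\in\interior(\dom\psi)$, so that $\rint(\dom\psi)\cap Q\ne\emptyset$ and the regularity condition \eqref{eq:regularity_relint} holds; this legitimizes the use of \Cref{loc:geometric_probabilistic_results.statement} and \Cref{loc:corollary_of_the_inward_shift_result.statement}.

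\textbf{Small-$\varepsilon$ direction.} Since $\psi$ is differentiable at each vertex $z$ with $\nabla\psi(z)\in N(z)$, the vector $v_z:=\nabla\psi(z)$ lies in $\partial\psi(z)\cap N(z)$, so \Cref{loc:corollary_of_the_inward_shift_result.statement} applies with $B=\max_{z\in\vrt(Q)}\|\nabla\psi(z)\|$ and gives $\mathbb P[\ER^c(\varepsilon)]\le\tfrac12\varepsilon^2B^2+\varepsilon B\sqrt n$. Substituting the hypothesis $\varepsilon\le\delta/(2\sqrt n\,B)$ bounds the linear term by $\delta/2$ and the quadratic term by $\delta^2/(8n)\le\delta/2$ (using $\delta\le 1\le 4n$), whence $\mathbb P[\ER^c(\varepsilon)]\le\delta$, i.e.\ $\mathbb P[\ER(\varepsilon)]\ge 1-\delta$. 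This half is a short arithmetic consequence of the cited corollary.

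\textbf{Large-$\varepsilon$ direction.} Here I use the upper bound in \Cref{loc:gaussian_measure_of_the_margin_resulting_from_an_inward_shift.statement} at each vertex. Fix $z$, set $T=N(z)$, and use the hypothesis $\nabla\psi(z)\in N^\star(z)=T^*$: since $T^*$ is a cone and $\varepsilon>0$, we have $\varepsilon\nabla\psi(z)\in T^*$, so $\proj_{T^*}(\varepsilon\nabla\psi(z))=\varepsilon\nabla\psi(z)$ and $\dist(\varepsilon\nabla\psi(z),T^*)=0$; the theorem then collapses to $\gamma(N(z)\cap[N(z)+\varepsilon\nabla\psi(z)])\le\gamma(N(z))\exp(-\tfrac12\varepsilon^2\|\nabla\psi(z)\|^2)$. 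Summing over $z\in\vrt(Q)$ via \eqref{eq:pr-exact}, bounding $\|\nabla\psi(z)\|\ge b:=\min_{z}\|\nabla\psi(z)\|$, and invoking $\sum_{z\in\vrt(Q)}\gamma(N(z))=1$ from \Cref{loc:uniqueness_of_solution_in_a_random_linear_program.statement}, I obtain $\mathbb P[\ER(\varepsilon)]\le\exp(-\tfrac12\varepsilon^2 b^2)$. The hypothesis $\varepsilon\ge\sqrt{2\log(1/\delta)}\,/b$ gives $\tfrac12\varepsilon^2 b^2\ge\log(1/\delta)$, hence $\mathbb P[\ER(\varepsilon)]\le\delta$ and $\mathbb P[\ER^c(\varepsilon)]\ge 1-\delta$.

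\textbf{Anticipated obstacle.} The substantive work is already packaged in \Cref{loc:gaussian_measure_of_the_margin_resulting_from_an_inward_shift.statement} and the cone-sum \eqref{eq:pr-exact}, so the proof is mostly bookkeeping. The two places needing care are: (i) recognizing that the two distinct hypotheses, $\nabla\psi(z)\in N(z)$ versus $\nabla\psi(z)\in N^\star(z)$, are precisely what is required — the former to invoke the membership-condition corollary at all, the latter to annihilate both the projection deficit in $\|\proj_{T^*}w\|^2$ and the distance term $\dist(w,T^*)\sqrt n$ in the upper bound; and (ii) confirming that the polytope hypothesis together with \eqref{eq:regularity_relint}, which I extract from differentiability of $\psi$ at the vertices, licenses both the exact identity \eqref{eq:pr-exact} and the normalization $\sum_z\gamma(N(z))=1$. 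Everything else reduces to the elementary inequalities $1-e^{-t}\le t$ and the monotonicity of $\exp$.
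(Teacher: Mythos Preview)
Your proof is correct and follows essentially the same route as the paper: both halves invoke the two-sided inner-cone estimate of \Cref{loc:gaussian_measure_of_the_margin_resulting_from_an_inward_shift.statement}, sum over vertices via \eqref{eq:pr-exact} using $\sum_{z}\gamma(N(z))=1$, and then finish with elementary inequalities. The only cosmetic difference is that for the small-$\varepsilon$ direction you cite the already-packaged \Cref{loc:corollary_of_the_inward_shift_result.statement}, whereas the paper re-derives the same bound directly from the theorem; your extra care in extracting the regularity condition \eqref{eq:regularity_relint} from differentiability at the vertices is a detail the paper leaves implicit.
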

One could derive analogous corollaries for other results in this paper, notably \Cref{loc:bound_via_representer_vectors.statement} and \Cref{loc:applied_margin_measure_for_exact_regularization.statement}, but we omit them to avoid redundancy.

\subsection{Bounds via representer vectors}
\label{loc:body.main_results.bounds_via_representer_vectors}
Let $T=N(z)$ be the normal cone at $z$. For a vector $w \in \partial(\varepsilon\psi)(z)$ that \emph{fails} the membership condition, we construct a \emph{representer vector} $\tilde{w}$ satisfying the membership condition such that
$$
T \cap [T+w] = T \cap [T+\tilde{w}].
$$
This construction relies on an explicit polyhedral description of $T$ via its facet structure, which we develop below. The main result, given in \Cref{loc:bound_via_representer_vectors.statement}, provides probabilistic bounds in terms of the norm $\norm{\tilde{w}}$.

\subsubsection{Representer vector construction}

For any subgradient $w \in \partial(\varepsilon \psi)(z)$, the inclusion
\begin{equation*}
T \cap [T+\partial(\varepsilon \psi)(z)] \supseteq T \cap [T+w]
\end{equation*}
reduces the problem to lower bounding $\gamma(T\cap[T+w])$. 

We define the margin $M(T,w) := T\setminus[T+w]$, where $w$ may be a vector or a set; in the latter case, the sum is a Minkowski sum. We use the margin to bound the complement of the inner cone; upper bounds on $\gamma(M(T, w))$ give lower bounds on the probability of exact regularization.

We show that the representer vector $\tilde{w}$ satisfies $M(T,w)=M(T,\tilde{w})$, as illustrated in \cref{fig:pubfig:250713:membership:representer.pdf}(b). This construction depends on the explicit polyhedral description of $T$, specifically via its inward-facing unit normals
$s^1, \ldots, s^{\ell} \in \mathbb{S}^{n-1}$:
\begin{equation}\label{eq:inward-normal-description}
T  =  \set{y \in \mathbb{R}^n | y  \cdot s^i  \ge 0 \,\,\forall i  \in [\ell]}.
\end{equation}
Each unit normal $s^i$ defines a facet $T^i = \set{ y \in T | s^i \cdot y = 0 }$, which is an $(n-1)$-dimensional polyhedral face of $T$. We denote by $S \in \mathbb{R}^{\ell \times n}$ the matrix with rows $s^1,\dots,s^\ell$ (or $S_T$ when emphasizing the dependence on $T$).  When the vertex $z$ is nondegenerate, the submatrix $A_{J(z)}$ with rows $\set{ a_{i} | i \in J(z) }$ is square and invertible, and we can express $T=A_{J(z)}^T \mathbb{R}^n_+ = \set{y \mid A^{-T}_{J(z)}y \in \mathbb{R}^n_+}$; cf.\@ \cref{loc:introduction_to_exact_regularisation_with_proof.mathematical_set:up_and_the_central_theorem}. Consequently, the rows $s^1,\dots,s^n$ (where $\ell=n$) can be taken to be the normalized columns of $A_{J(z)}^{-1}$, which implies that $S_{T}$ is invertible.

Given $S_{T}$ and $w$, we define a representer vector as any vector $\tilde{w}\in \mathbb{R}^n$ that satisfies
\begin{equation*}
S_{T} \tilde{w} = (S_{T}w)_{+},
\end{equation*}
where $x_{+}:= \max(0,x)$ operates componentwise. Any solution $\tilde{w}$ that satisfies this equation belongs to $T$ because $s^i\cdot \tilde{w} = (S_{T}\tilde{w})_{i} \geq 0$ for all $i \in [\ell]$; cf.~\eqref{eq:inward-normal-description}. Nondegeneracy of the vertex $z$ guarantees that such a representer exists..The following lemma confirms that $\tilde{w}$ generates the correct margin.
\begin{lemma}[Representer vectors for the margin]
\label{loc:representer_vectors_for_the_margin.statement}
If $\tilde{w}$ is a representer vector satisfying $S_{T}\tilde{w}=(S_{T}w)_{+}$, then it holds that
$M(T,w)=M(T, \tilde{w})$.
\end{lemma}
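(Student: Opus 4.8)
The plan is to show the two set identities $T+w = T+\tilde w$ fail in general, but the \emph{margins} coincide because a point $y \in T$ lies in $T+w$ if and only if $y - w \in T$, which by the inward-normal description~\eqref{eq:inward-normal-description} means $s^i \cdot (y-w) \ge 0$, i.e.\ $(S_T y)_i \ge (S_T w)_i$ for every $i \in [\ell]$. The key observation is that for $y \in T$ we already have $(S_T y)_i \ge 0$, so the binding constraints are only those indices $i$ with $(S_T w)_i > 0$; on the remaining indices the inequality $(S_T y)_i \ge (S_T w)_i$ holds automatically since $(S_T w)_i \le 0 \le (S_T y)_i$. Hence, for $y \in T$,
\begin{equation*}
y \in T+w \iff (S_T y)_i \ge (S_T w)_i \ \ \forall i \iff (S_T y)_i \ge \max\{0,(S_T w)_i\} = (S_T w)_+ {}_i \ \ \forall i.
\end{equation*}

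First I would make this rigorous: fix $y \in T$ and use that $T - w' = \{y : S_T y \ge S_T w'\}$-type reasoning does \emph{not} literally apply componentwise unless $S_T$ is the full facet-normal matrix, so I would argue directly from~\eqref{eq:inward-normal-description}. Write $y \in T+w$ iff $y - w \in T$ iff $s^i\cdot(y-w) \ge 0$ for all $i$, iff $(S_T y)_i \ge (S_T w)_i$ for all $i$. Next, intersect with $y \in T$, which gives $(S_T y)_i \ge 0$; combining, $y \in T \cap (T+w)$ iff $y \in T$ and $(S_T y)_i \ge (S_T w)_i$ for all $i$, which (using $(S_T y)_i \ge 0$ to handle indices with $(S_T w)_i \le 0$) is equivalent to $y \in T$ and $(S_T y)_i \ge (S_T w)_+{}_i$ for all $i$. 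Then I repeat the identical chain with $\tilde w$ in place of $w$: since $S_T \tilde w = (S_T w)_+$, the condition $(S_T y)_i \ge (S_T \tilde w)_i = (S_T w)_+{}_i$ is literally the same condition. Therefore $T \cap (T+w) = T \cap (T+\tilde w)$, and taking complements within $T$ gives $M(T,w) = T \setminus (T+w) = T \setminus (T+\tilde w) = M(T,\tilde w)$.

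I expect the main (minor) obstacle to be notational care around whether $S_T$ has exactly $\ell$ rows describing $T$ versus being the square invertible matrix in the nondegenerate case: the argument above only uses~\eqref{eq:inward-normal-description}, i.e.\ that the rows of $S_T$ are a set of inward normals cutting out $T$, so it applies verbatim in both situations, and I would state the lemma's proof at that level of generality. One should also note explicitly that a representer $\tilde w$ need not itself lie in $T+w$ or equal anything canonical — only $T \cap (T+w)$ is pinned down — and that $\tilde w \in T$ (already checked in the text) is what makes $M(T,\tilde w)$ the ``membership-condition'' margin $T \setminus (T+\tilde w)$ with $T+\tilde w \subseteq T$. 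No concentration or measure-theoretic input is needed; this is a purely polyhedral identity.
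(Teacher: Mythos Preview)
Your proposal is correct and takes essentially the same approach as the paper. The paper characterizes the margin directly as $M(T,w)=\bigcup_{i}\{x\in T\mid 0\le s^i\cdot x < s^i\cdot w\}$ and notes that the $i$th set is empty when $s^i\cdot w\le 0$, so one may replace $s^i\cdot w$ by $(s^i\cdot w)_+=s^i\cdot\tilde w$; you instead characterize the inner cone $T\cap(T+w)$ by the inequalities $(S_Ty)_i\ge(S_Tw)_i$ and note that for $y\in T$ the constraints with $(S_Tw)_i\le 0$ are redundant, reaching the same replacement $(S_Tw)_i\mapsto(S_Tw)_+{}_i$ and then taking complements in $T$. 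These are complementary phrasings of the same polyhedral observation.
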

See \hyperlink{loc:representer_vectors_for_the_margin.proof_(representing_property)}{the proof} in \Cref{loc:body.proofs}.

\subsubsection{Representer vector theorem}
We now present the analogous result to \Cref{loc:gaussian_measure_of_the_margin_resulting_from_an_inward_shift.statement}.
\begin{theorem}[Bound via representer vectors]
\label{loc:bound_via_representer_vectors.statement}
Let $z \in \vrt(Q)$, $\varepsilon \geq 0$, and denote $T=N(z)$. Assume that the vertex $z$ is nondegenerate, so that the matrix $S_{T}$ is invertible. Then, for any subgradient $w \in \partial (\varepsilon\psi)(z)$, it holds that
\begin{equation*}
\gamma(T \cap [T + \partial (\varepsilon\psi)(z)]) \geq \gamma(T)\exp\left( - \tfrac{1}{2}\|\tilde{w}\|^2 - \|\tilde{w}\|\sqrt{n} \right),
\end{equation*}
where $\tilde{w}:= S_{T}^{-1}(S_{T}w)_{+}$ is the corresponding representer vector. Additionally, if $\psi$ is differentiable at $z$,
\begin{equation*}
\gamma(T \cap [T +\varepsilon \nabla \psi(z)]) \leq \gamma(T) \exp \left( - \tfrac{1}{2} \|\proj_{T^*} \tilde{w}\|^2 + \mathrm{dist}(\tilde{w}, T^*)\sqrt{ n } \right),
\end{equation*}
where now $\tilde{w}:=  S_{T}^{-1}[S_{T} (\varepsilon\nabla \psi(z))]_{+}$.
\end{theorem}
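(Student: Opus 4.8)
The plan is to reduce \Cref{loc:bound_via_representer_vectors.statement} to \Cref{loc:gaussian_measure_of_shifted_cones.statement} via the representer construction, handling the lower and upper bounds separately. For the lower bound, start from the inclusion $T\cap[T+\partial(\varepsilon\psi)(z)]\supseteq T\cap[T+w]$ for the chosen subgradient $w$, which holds because $w\in\partial(\varepsilon\psi)(z)$. Then invoke \Cref{loc:representer_vectors_for_the_margin.statement}: since $\tilde w=S_T^{-1}(S_Tw)_+$ is a representer vector, $M(T,w)=M(T,\tilde w)$, hence $T\cap[T+w]=T\setminus M(T,w)=T\setminus M(T,\tilde w)=T\cap[T+\tilde w]$. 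Because $\tilde w\in T$ (each coordinate $s^i\cdot\tilde w=(S_T\tilde w)_i=(S_Tw)_{+,i}\ge 0$), the membership condition holds for $\tilde w$, so $T\cap[T+\tilde w]=T+\tilde w$ by convexity of $T$ (exactly the argument recalled at the start of \Cref{loc:body.main_results}). Finally apply the lower bound of \Cref{loc:simpler_measure_of_shifted_cones.statement} (equivalently \Cref{loc:gaussian_measure_of_shifted_cones.statement} after bounding $\mathrm{dist}(\tilde w,-T^*)\le\|\tilde w\|$) to $V=T$, $w=\tilde w$, $d=n$, which gives $\gamma(T+\tilde w)\ge\gamma(T)\exp(-\tfrac12\|\tilde w\|^2-\|\tilde w\|\sqrt n)$, completing the lower bound. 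Note this half only uses convexity of $\psi$, as claimed.

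For the upper bound, the relevant set is $T\cap[T+\varepsilon\nabla\psi(z)]$ with $w=\varepsilon\nabla\psi(z)$ a single vector (differentiability of $\psi$ at $z$ makes $\partial(\varepsilon\psi)(z)=\{\varepsilon\nabla\psi(z)\}$). Again by \Cref{loc:representer_vectors_for_the_margin.statement} with $\tilde w=S_T^{-1}[S_T(\varepsilon\nabla\psi(z))]_+$ we get $M(T,\varepsilon\nabla\psi(z))=M(T,\tilde w)$, hence $T\cap[T+\varepsilon\nabla\psi(z)]=T\cap[T+\tilde w]=T+\tilde w$ since $\tilde w\in T$. Then apply the upper bound of \Cref{loc:gaussian_measure_of_shifted_cones.statement} to $V=T$, $w=\tilde w$, $d=n$: $\gamma(T+\tilde w)\le\gamma(T)\exp(-\tfrac12\|\proj_{T^*}\tilde w\|^2+\mathrm{dist}(\tilde w,T^*)\sqrt n)$, which is exactly the claimed inequality. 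The nondegeneracy hypothesis enters precisely to guarantee $S_T$ is invertible (as discussed in the paragraph preceding the theorem, the rows of $S_T$ can be taken as the normalized columns of $A_{J(z)}^{-1}$), so that $\tilde w$ is well-defined; without it the representer need not exist or be unique.

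The only genuine subtlety—and the step I expect to require the most care—is the chain of set equalities $T\cap[T+w]=T+\tilde w$. The identity $T\cap[T+w]=T\setminus M(T,w)$ is just unwinding the definition $M(T,w)=T\setminus[T+w]$, but one must check $T+w\subseteq$ is not assumed: in general $T\cap[T+w]=T\setminus(T\setminus[T+w])=T\setminus M(T,w)$ holds unconditionally. The substitution $M(T,w)=M(T,\tilde w)$ is \Cref{loc:representer_vectors_for_the_margin.statement}. The final equality $T\setminus M(T,\tilde w)=T\cap[T+\tilde w]=T+\tilde w$ uses $\tilde w\in T$ together with convexity: for $\tilde w\in T$ and any $t\in T$, $t+\tilde w\in T$, so $T+\tilde w\subseteq T$, whence $T\cap[T+\tilde w]=T+\tilde w$. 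After that, everything is a direct citation of \Cref{loc:gaussian_measure_of_shifted_cones.statement} (or its corollary) with the appropriate choice of shift vector, and—for the lower bound—the elementary estimate $\mathrm{dist}(\tilde w,-T^*)\le\|\tilde w-0\|=\|\tilde w\|$ since $0\in -T^*$.
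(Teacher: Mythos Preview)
Your proposal is correct and follows essentially the same route as the paper's proof: reduce to the shifted-cone estimate by passing from $w$ to the representer $\tilde w$ via \Cref{loc:representer_vectors_for_the_margin.statement}, use $\tilde w\in T$ to collapse $T\cap[T+\tilde w]$ to $T+\tilde w$, and then invoke \Cref{loc:gaussian_measure_of_shifted_cones.statement}. The only cosmetic difference is that the paper writes the margin step as $\gamma(T\cap[T+w])=\gamma(T)-\gamma(M(T,w))=\gamma(T)-\gamma(M(T,\tilde w))$ rather than your set-level identity, and it records $\mathrm{dist}(-\tilde w,T^*)=\|\tilde w\|$ as an equality (via $\tilde w\in T$ and Moreau) rather than your inequality $\mathrm{dist}(\tilde w,-T^*)\le\|\tilde w\|$ from $0\in-T^*$; either suffices.
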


The bounds in \Cref{loc:bound_via_representer_vectors.statement} depend on the norm of the representer $\tilde{w}$. With the representation $\tilde{w}=S_{T}^{-1}(S_{T}w)_{+}$, we bound this norm in terms of $w$ and the condition number $\mathrm{cond}(S_{T}):=\sigma_{1}(S_{T})/\sigma_{n}(S_{T})=\|S_{T}\|_{\mathrm{op}}\|S_{T}^{-1}\|_{\mathrm{op}}$, where $\sigma_{i}(S_{T})$ denotes the $i$th largest singular value of $S_{T}$:
\begin{align*}
\|\tilde{w}\|=\|S_{T}^{-1}(S_{T}w)_{+}\|
&\le \|S_{T}^{-1}\|_{\mathrm{op}} \|(S_{T}w)_{+}\|  \\
&\le \|S_{T}^{-1}\|_{\mathrm{op}}\,\|S_{T}\|_{\mathrm{op}}\,\|w\|
= \mathrm{cond}(S_{T})\,\|w\|.
\end{align*}

\section{Margin bounds via facet decomposition}
\label{loc:body.main_results.using_the_boundary_of_normal_cones}

We establish two-sided bounds on the Gaussian measure of the margin in terms of a geometric quantity.
\Cref{loc:geometric_probabilistic_results.statement} establishes
that
\begin{equation*}
 \prob(\mathrm{ER}^c(\varepsilon))  = \textstyle \sum_{z  \in \vrt(Q)}
\gamma(N(z) \setminus [N(z) + \partial(\varepsilon \psi)(z)]).
\end{equation*}
Because the margin~\eqref{eq:margin} appears in each
summand, we bound $\gamma(M(N(z), w))$ for each vertex $z$ and
subgradient $w \in \partial (\varepsilon \psi)(z)$. These margin
bounds exploit the facet structure of the normal cone boundary.

\subsection{Margin partition and Gaussian measure}

Because facets of normal cones lie in proper subspaces, we require a measure adapted to lower-dimensional sets.  For any cone $T  \subseteq \mathbb{R}^n$, the \emph{relative Gaussian measure} is $\gamma_{\mathrm{rel}}(T) := \prob(\proj_{\Span(T)} g  \in T)$, 
where $g \sim \mathcal{N}(0, I_n)$. This measures $T$ relative to its span, rather than the ambient space.
We use this facet decomposition to bound the margin $M(T, w)$.
\Cref{fig:pubfig:250713:margin:partition.pdf} illustrates the geometry underlying this decomposition.

\begin{lemma}[Partition of the margin]
\label{loc:partition_of_the_margin.statement}
For a polyhedral cone $T \subseteq \mathbb{R}^n$ with $\ell$ facets $T^i$ associated with inward normals $s^i$, and $w \in \mathbb{R}^n$,
\begin{equation}
\label{eq:partition}
\gamma(M(T,w)) \le \sum_{i=1}^\ell \gamma(P^i),
\quad\text{where}\quad
P^i :=
\begin{cases}
\bigcup_{t  \in [0, 1)} [T^i + tw] & \text{if $s^i  \cdot w > 0$},\\
\emptyset & \text{otherwise.}
\end{cases}
\end{equation}
If $w \in T$, then \eqref{eq:partition} holds with equality.
\end{lemma}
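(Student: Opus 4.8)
The plan is to reduce the statement to a covering argument. The margin $M(T,w) = T \setminus [T+w]$ collects points $y \in T$ that leave $T$ when shifted back by $w$, i.e.\ $y - w \notin T$. Using the polyhedral description \eqref{eq:inward-normal-description}, $y - w \notin T$ means $s^i \cdot (y-w) < 0$ for some facet normal $s^i$, that is, $s^i \cdot y < s^i \cdot w$. Combined with $y \in T$ (so $s^i \cdot y \ge 0$), this forces $s^i \cdot w > 0$ for the violated index, and $0 \le s^i \cdot y < s^i \cdot w$. Hence $M(T,w) \subseteq \bigcup_{i : s^i \cdot w > 0} \{ y \in T : 0 \le s^i \cdot y < s^i \cdot w \}$. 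The first step is therefore to show that each slab-slice $\{ y \in T : 0 \le s^i \cdot y < s^i \cdot w \}$ is contained in $P^i = \bigcup_{t \in [0,1)} [T^i + tw]$. Given such a $y$, set $t = (s^i \cdot y)/(s^i \cdot w) \in [0,1)$; then $y - tw$ satisfies $s^i \cdot (y - tw) = 0$, so it lies on the hyperplane spanned by the facet, and one checks $y - tw \in T$ (each coordinate $s^j \cdot (y - tw)$ is a convex-type combination handled below), giving $y - tw \in T^i$ and thus $y \in T^i + tw \subseteq P^i$. Summing Gaussian measures over $i$ yields $\gamma(M(T,w)) \le \sum_i \gamma(P^i)$, where the indices with $s^i \cdot w \le 0$ contribute nothing, matching the case definition in \eqref{eq:partition}.

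The subtle point in the containment $y - tw \in T$ is that while $s^i \cdot (y - tw) = 0 \ge 0$ and $s^i \cdot y \ge 0$ with $t < 1$ handle the $i$th normal, we must ensure $s^j \cdot (y - tw) \ge 0$ for the \emph{other} normals $j$. This need not hold for an arbitrary slab-slice point, which is why the union $P^i$ is taken over \emph{all} facets and the inclusion is one-sided in general. The correct accounting is: for \emph{each} $y \in M(T,w)$, pick an index $i$ with $s^i \cdot (y-w) < 0$, and among all such indices choose the one minimizing the ratio $t_i := (s^i \cdot y)/(s^i \cdot w)$ (all these ratios lie in $[0,1)$ since $s^i\cdot y \ge 0$ and we may discard the degenerate case $s^i \cdot w = 0$, which cannot give a strict violation when $s^i \cdot y \ge 0$). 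With $t := \min_i t_i$ over the violated indices, $y - tw$ still lies in $T$: for a violated index $j$, $s^j \cdot (y - tw) = s^j\cdot y - t\, s^j\cdot w \ge s^j \cdot y - t_j\, s^j \cdot w = 0$; for a non-violated index $j$, $s^j \cdot (y - w) \ge 0$ and $s^j \cdot y \ge 0$ with $t \in [0,1)$ give $s^j\cdot(y-tw) \ge 0$ by convexity of $[0,1]\ni t \mapsto s^j\cdot(y - tw)$ being nonnegative at the endpoints $0$ and $1$. At the minimizing index $i$ we have $s^i \cdot (y - tw) = 0$, so $y - tw \in T^i$ and $y \in T^i + tw \subseteq P^i$. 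This is the main obstacle — getting the index-selection right so that the shifted point genuinely lands on a facet while staying in the cone — and it is exactly where the asymmetry (inequality, not equality) originates.

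For the equality claim when $w \in T$, I would prove the reverse inclusion $P^i \subseteq M(T,w)$ for each $i$ with $s^i \cdot w > 0$. Take $y = u + tw$ with $u \in T^i$ and $t \in [0,1)$. Then $y \in T$ since $T$ is a convex cone containing both $u$ and $w$ (here $w \in T$ is used). Moreover $s^i \cdot (y - w) = s^i \cdot u + (t-1)\, s^i \cdot w = (t-1)\, s^i \cdot w < 0$ because $s^i \cdot u = 0$, $t - 1 < 0$, and $s^i \cdot w > 0$; hence $y - w \notin T$, so $y \in M(T,w)$. Thus $M(T,w) = \bigcup_{i : s^i\cdot w > 0} P^i$ exactly. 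The measure identity $\gamma(M(T,w)) = \sum_i \gamma(P^i)$ then follows from \Cref{loc:normal_cones_and_faces.statement}-style reasoning: the sets $P^i$ for distinct $i$ overlap only in lower-dimensional pieces (their pairwise intersections lie in translates of the $(n-1)$-dimensional facet hyperplanes, more precisely in unions of sets of the form $T^i \cap T^j$ shifted, which are of dimension $\le n-1$), so inclusion–exclusion collapses to a plain sum up to $\gamma$-null sets. I would note that strictly one only needs $\le$ for the applications (upper bounds on failure probability), so the equality case can be stated with the overlaps handled by a brief dimension count rather than a delicate disjointification.
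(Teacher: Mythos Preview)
Your proposal is correct and follows essentially the same route as the paper. The only cosmetic difference is in how the exit parameter $t$ is produced: the paper sets $t := \sup\{q \in [0,1) : x - qw \in T\}$ and then extracts an index $i$ at which the facet constraint becomes active, whereas you compute $t$ directly as $\min_{i \text{ violated}} (s^i \cdot y)/(s^i \cdot w)$ and take the minimizing index; these give the same $t$ and the same $i$, and your verification that $y - tw \in T$ via the endpoint/affine argument for non-violated indices is exactly the content of the paper's supremum attainment. The reverse inclusion when $w \in T$ and the measure-zero overlap argument (via $P^i \cap P^j \subseteq \bigcup_t [(T^i \cap T^j) + tw]$, dimension $\le n-1$) match the paper as well.
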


\begin{figure}[t]
\centering
\includegraphics[width=0.7\textwidth]{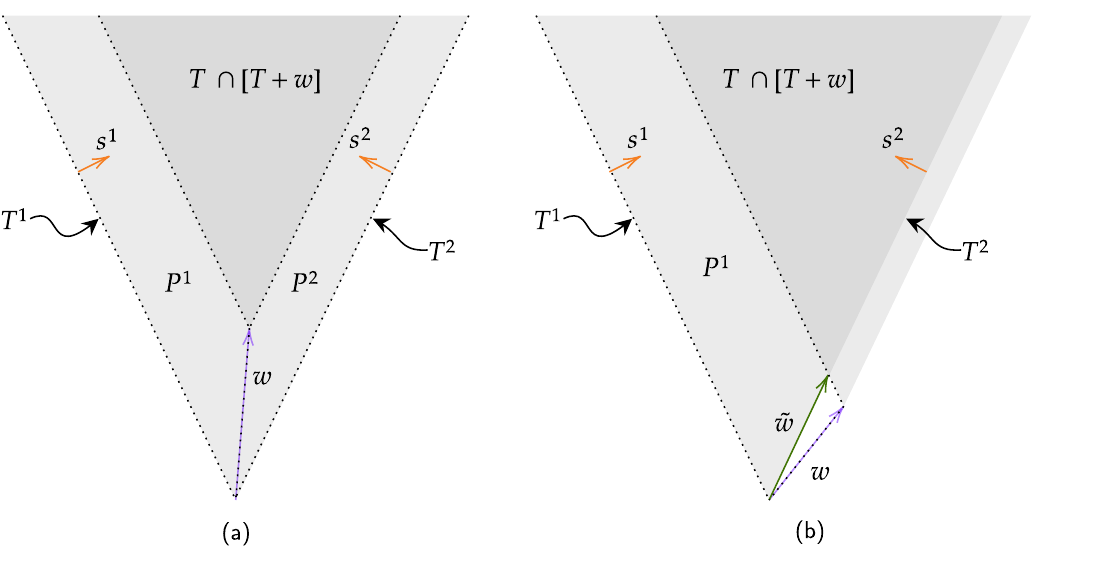}
\caption{ \textbf{Decomposing the margin}. Consider a polyhedral cone $T$ with facets $T^1$, $T^2$ and associated inward normals $s^1, s^2$. Given a vector $w$, the margin $M(T,w)$ lies within the union of $P^1$ and $P^2$ (regions enclosed by dotted lines). The containment is tight when $w \in T$ (panel a), but not in general (panel b).
\label{fig:pubfig:250713:margin:partition.pdf}}
\end{figure}

The partition in \Cref{loc:partition_of_the_margin.statement} allows for explicit Gaussian measure bounds.
\begin{lemma}[Gaussian measure of the margin]
\label{loc:gaussian_measure_of_the_margin.statement}
Let $T \subseteq \mathbb{R}^n$ be a polyhedral cone with $\ell$ facets $T^i$ with associated inward unit normals $s^i$. For $w \in \mathbb{R}^n$, let
\begin{equation} \label{eq:measure-of-margin}
F(T, w) := (\sqrt{2\pi})^{-1}\textstyle\sum_{i=1}^\ell  \gamma_{\mathrm{rel}}(T^i)(s^i \cdot w)_+.
\end{equation}
Then
\begin{equation*}
\gamma(M(T, w)) \le 2 F(T,w)\exp \left( \sqrt{n-1} \|w\|\right).
\end{equation*}
If $w  \in T$, then
\begin{equation*}
\gamma(M(T, w)) \ge F(T, w) \exp \left(- \sqrt{n-1}\|w \| - \tfrac12\|w\|^2\right).
\end{equation*}
\end{lemma}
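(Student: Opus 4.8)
The plan is to combine the margin partition from \Cref{loc:partition_of_the_margin.statement} with a Gaussian measure computation on each piece $P^i = \bigcup_{t\in[0,1)}[T^i + tw]$, treating $P^i$ as a ``prism'' over the facet $T^i$ in the direction $w$. Fix a facet index $i$ with $s^i\cdot w>0$ (the only indices that contribute). Since $T^i$ lives in the hyperplane $H_i := \{y : s^i\cdot y = 0\}$, I would decompose $\mathbb{R}^n$ orthogonally as $H_i \oplus \mathbb{R}s^i$ and write the Gaussian density as a product. A point of $P^i$ has the form $y + tw$ with $y\in T^i$, $t\in[0,1)$; projecting onto $s^i$ gives coordinate $t\,(s^i\cdot w)$, which ranges over $[0, s^i\cdot w)$. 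The key geometric observation is that $P^i$ fibers over this interval: for each fixed value of the $s^i$-coordinate, the slice is a translate (within $H_i$, by the $H_i$-component of $tw$) of $T^i$. Because $T^i$ is a cone inside $H_i$ and Gaussian measure restricted to a subspace is again Gaussian, I can integrate the $(n-1)$-dimensional Gaussian density over each translated slice and the one-dimensional Gaussian density over $t\,(s^i\cdot w)\in[0, s^i\cdot w)$.

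The core estimate is then a sandwich: the $(n-1)$-dimensional Gaussian measure of a translate of $T^i$ within $H_i$ is controlled by \Cref{loc:simpler_measure_of_shifted_cones.statement} (applied in dimension $d=n-1$), giving factors $\exp(\pm\|w\|\sqrt{n-1})$ and, for the lower bound, an additional $\exp(-\tfrac12\|w\|^2)$, relative to $\gamma_{\mathrm{rel}}(T^i)$. For the $s^i$-direction, integrating the standard one-dimensional Gaussian density $(2\pi)^{-1/2}e^{-u^2/2}$ over $u\in[0, s^i\cdot w)$ after the substitution $u = t\,(s^i\cdot w)$ contributes a factor between $(\sqrt{2\pi})^{-1}(s^i\cdot w)_+\, e^{-(s^i\cdot w)^2/2}$ and $(\sqrt{2\pi})^{-1}(s^i\cdot w)_+$; since $(s^i\cdot w)^2\le\|w\|^2$, the lower bound is absorbed into the $\exp(-\tfrac12\|w\|^2)$ term already present. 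Assembling, $\gamma(P^i)$ is bounded above by $(\sqrt{2\pi})^{-1}\gamma_{\mathrm{rel}}(T^i)(s^i\cdot w)_+\exp(\|w\|\sqrt{n-1})$ and, when $w\in T$ so that the partition is exact and the relevant translates stay inside $T^i$'s span structure, bounded below by $(\sqrt{2\pi})^{-1}\gamma_{\mathrm{rel}}(T^i)(s^i\cdot w)_+\exp(-\|w\|\sqrt{n-1}-\tfrac12\|w\|^2)$. Summing over $i$ and invoking \Cref{loc:partition_of_the_margin.statement} yields $F(T,w)$ times the stated exponential factors; the leftover factor of $2$ in the upper bound comes from bounding $e^{-(s^i\cdot w)^2/2}\ge$ something only in the lower bound, or more simply from a union-bound slack in how the prisms $P^i$ may overlap — I would insert it as a crude constant rather than optimize.

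The main obstacle I anticipate is making the fibration of $P^i$ rigorous when $w$ has a nonzero component inside $H_i$: the slices of $P^i$ at different heights are \emph{different} translates of $T^i$, so I cannot factor the integral as a clean product but must integrate a height-dependent shifted-cone measure. Handling this requires applying the shifted-cone bound uniformly over all the translates $T^i + (\text{component of }tw\text{ in }H_i)$ for $t\in[0,1)$, and checking that the shift vector involved always has norm at most $\|w\|$ so the bounds from \Cref{loc:simpler_measure_of_shifted_cones.statement} apply with the stated constants. A secondary subtlety is the lower bound: it needs $w\in T$ both to get equality in the partition lemma and to ensure the translated slices are themselves of the form (cone)+(bounded shift) with the shift controlled; I would verify that $w\in T$ makes $s^i\cdot w\ge 0$ for every $i$, so every prism $P^i$ is nonempty and the decomposition genuinely partitions $M(T,w)$ up to measure zero.
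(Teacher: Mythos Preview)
Your proposal is correct and follows essentially the same route as the paper: invoke \Cref{loc:partition_of_the_margin.statement}, write $\gamma(P^i)$ as an iterated integral via the orthogonal splitting $H_i\oplus\mathbb{R}s^i$ (equivalently $w=w_\parallel+w_\perp$), recognize the slice at height $t$ as the $(n-1)$-dimensional shifted cone $T^i+tw_\parallel$, apply \Cref{loc:simpler_measure_of_shifted_cones.statement} uniformly in $t\in[0,1]$ using $\|tw_\parallel\|\le\|w\|$, and combine with the one-dimensional Gaussian factor $(2\pi)^{-1/2}e^{-t^2(s^i\cdot w)_+^2/2}$. Your diagnosis of the ``main obstacle'' and its resolution matches the paper exactly; the factor $2$ in the stated upper bound is indeed slack (the paper's derivation actually yields the bound without it), so your instinct to treat it as a crude constant is right.
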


\subsection{Application to exact regularization}

We now give a result for the probability of
exact regularization using the margin set in a normal cone of $Q$.

\begin{theorem}[Applied margin measure for exact regularization]
\label{loc:applied_margin_measure_for_exact_regularization.statement}
Let $z \in \vrt(Q)$. Let $T^i$ denote 
the facets of the polyhedral normal cone $N(z)$, with their associated inward
normals $s^i$. Let the function $F$ be as in \Cref{loc:gaussian_measure_of_the_margin.statement}. Then for any $v \in \partial \psi(z)$,
\begin{equation*}
\gamma(M(N(z),\, \partial (\varepsilon \psi)(z)))  \le 2\varepsilon F(N(z),  v)\exp \left( \varepsilon\sqrt{n-1} \|v\|\right).
\end{equation*}
Furthermore, if $\psi$ is differentiable at $z$ and $\nabla \psi(z) \in N(z)$, then
\begin{equation*}
\gamma(M(N(z),\, \varepsilon \nabla \psi(z)))  \ge \varepsilon F(N(z),   \nabla \psi(z)) \exp \left(- \varepsilon\sqrt{n-1}\| \nabla \psi(z) \| - \tfrac12 \varepsilon^2 \|\nabla \psi(z)\|^2\right).
\end{equation*}
Alternatively, if we allow $\nabla \psi(z) \notin N(z)$, but instead require that $N(z)$ be nondegenerate, then with
$\tilde{v} := S^{-1}(S \nabla \psi(z))_+$,
\begin{equation*}
\gamma(M(N(z),\, \varepsilon \nabla\psi(z)))  \ge \varepsilon F(N(z), \nabla \psi(z)) \exp \left(- \varepsilon \sqrt{n-1}\|\tilde{v}\| - \tfrac12\varepsilon^2\|\tilde{v}\|^2\right).
\end{equation*}
\end{theorem}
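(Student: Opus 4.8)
The plan is to deduce \Cref{loc:applied_margin_measure_for_exact_regularization.statement} directly from \Cref{loc:gaussian_measure_of_the_margin.statement} together with the representer-vector machinery of \Cref{loc:body.main_results.bounds_via_representer_vectors}. First I would reduce the Minkowski-sum margin $M(N(z),\partial(\varepsilon\psi)(z))$ to a single-vector margin. Picking any $v\in\partial(\varepsilon\psi)(z)$, we have $\varepsilon v\in\partial(\varepsilon\psi)(z)$ (using the scalar-multiplication convention), so $N(z)+\varepsilon v\subseteq N(z)+\partial(\varepsilon\psi)(z)$, hence $M(N(z),\partial(\varepsilon\psi)(z))\subseteq M(N(z),\varepsilon v)$. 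This yields the upper bound: apply \Cref{loc:gaussian_measure_of_the_margin.statement} to $T=N(z)$ and $w=\varepsilon v$, and use the homogeneity $F(T,\varepsilon v)=\varepsilon F(T,v)$ (immediate from the definition \eqref{eq:measure-of-margin}, since $(s^i\cdot\varepsilon v)_+=\varepsilon(s^i\cdot v)_+$ for $\varepsilon\ge 0$) and $\|\varepsilon v\|=\varepsilon\|v\|$. For the lower bounds we need the reverse inclusion, which is why differentiability is assumed: then $\partial(\varepsilon\psi)(z)=\{\varepsilon\nabla\psi(z)\}$ is a singleton, so $M(N(z),\partial(\varepsilon\psi)(z))=M(N(z),\varepsilon\nabla\psi(z))$ exactly.

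For the second (lower) bound, when $\nabla\psi(z)\in N(z)$, the vector $w=\varepsilon\nabla\psi(z)$ lies in $T=N(z)$ (cones are closed under nonnegative scaling), so the ``if $w\in T$'' branch of \Cref{loc:gaussian_measure_of_the_margin.statement} applies directly with $w=\varepsilon\nabla\psi(z)$; substituting $F(T,\varepsilon\nabla\psi(z))=\varepsilon F(T,\nabla\psi(z))$, $\|w\|=\varepsilon\|\nabla\psi(z)\|$, and $\tfrac12\|w\|^2=\tfrac12\varepsilon^2\|\nabla\psi(z)\|^2$ gives exactly the claimed inequality.

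For the third bound, where $\nabla\psi(z)\notin N(z)$ but $z$ is nondegenerate, I would invoke \Cref{loc:representer_vectors_for_the_margin.statement}: with $w=\varepsilon\nabla\psi(z)$ and $S=S_{N(z)}$ invertible, the representer $\tilde w:=S^{-1}(Sw)_+=\varepsilon\,S^{-1}(S\nabla\psi(z))_+=\varepsilon\tilde v$ satisfies $M(N(z),w)=M(N(z),\tilde w)$, and $\tilde w\in N(z)$ since $S\tilde w=(Sw)_+\ge 0$. Now apply the ``$w\in T$'' lower bound of \Cref{loc:gaussian_measure_of_the_margin.statement} to $\tilde w=\varepsilon\tilde v$, producing $F(T,\varepsilon\tilde v)\exp(-\sqrt{n-1}\,\varepsilon\|\tilde v\|-\tfrac12\varepsilon^2\|\tilde v\|^2)$. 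The one subtlety is that the stated bound has $F(N(z),\nabla\psi(z))$, not $F(N(z),\tilde v)$, in front; I would close this gap by observing that \Cref{loc:representer_vectors_for_the_margin.statement}'s proof (or \Cref{loc:partition_of_the_margin.statement}) shows the margin, hence its $F$-governed lower bound, is unchanged under the representer substitution — more concretely, $(s^i\cdot\tilde v)_+ \ge (s^i\cdot\nabla\psi(z))_+$ componentwise is not automatic, so instead I would note that the partition sets $P^i$ for $w=\varepsilon\nabla\psi(z)$ coincide with those for $\varepsilon\tilde v$ on the relevant index set, making $F(N(z),\nabla\psi(z))$ and $F(N(z),\tilde v)$ interchangeable in the estimate. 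This bookkeeping about which facet-sum represents the margin is the main obstacle; everything else is homogeneity and direct substitution.
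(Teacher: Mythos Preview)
Your proposal is correct and follows the paper's proof essentially line for line: reduce the Minkowski-sum margin to a single-vector margin via the inclusion $N(z)+\varepsilon v\subseteq N(z)+\partial(\varepsilon\psi)(z)$, apply \Cref{loc:gaussian_measure_of_the_margin.statement} with $w=\varepsilon v$ (or $w=\varepsilon\nabla\psi(z)$), and in the nondegenerate case pass to the representer $\tilde w=\varepsilon\tilde v$ via \Cref{loc:representer_vectors_for_the_margin.statement}.

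One clarification on what you call ``the main obstacle'': the replacement of $F(N(z),\tilde v)$ by $F(N(z),\nabla\psi(z))$ is in fact immediate, not subtle. By the very definition of the representer, $S\tilde v=(S\nabla\psi(z))_+$, i.e.\ $s^i\cdot\tilde v=(s^i\cdot\nabla\psi(z))_+\ge 0$ for every $i$, hence $(s^i\cdot\tilde v)_+=(s^i\cdot\nabla\psi(z))_+$ and $F(N(z),\tilde v)=F(N(z),\nabla\psi(z))$ exactly. This is precisely how the paper closes the argument; no appeal to the partition sets $P^i$ is needed. (Also, at the start you presumably mean $v\in\partial\psi(z)$, not $v\in\partial(\varepsilon\psi)(z)$.)
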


\subsubsection{Facets decomposition}

We use \Cref{loc:applied_margin_measure_for_exact_regularization.statement} to characterize the probability of the $\ER^c(\varepsilon)$ event for small $\varepsilon$ via the sum $\sum_{z \in \vrt(Q)} F(N(z),  \nabla \psi(z))$.
Indeed, with \Cref{loc:geometric_probabilistic_results.statement}, we can say that with $\psi$
differentiable and $\nabla\psi(z) \in N(z)$ for all $z \in \vrt(Q)$, and with
$\varepsilon \le (2\sqrt{n} \max_{z \in Q} \|\nabla \psi(z)\|)^{-1}$,
\begin{equation*}
 \textstyle \frac{\varepsilon}{e} \sum_{z \in \vrt(Q)} F(N(z),  \nabla \psi(z)) \le \prob(\mathrm{ER}^c(\varepsilon)) \le 2 e\varepsilon \sum_{z \in \vrt(Q)} F(N(z),  \nabla \psi(z)).
\end{equation*}

Using \cref{loc:applied_margin_measure_for_exact_regularization.statement}, 
we find a simple result that applies to the linear regularizer $\psi(x)= p \cdot x$, for
which we require some additional notation.
We henceforth refer to $1-$dimensional faces of $Q$ as ``edges", and denote by $\mathcal{B}$ the set of all edges of $Q$. Any edge $f \in \mathcal{B}$
takes the form $f=\conv(z_1, z_2)$ for $z_1, z_2  \in \vrt(Q)$.
The facets of the normal cones at vertices of $Q$ are themselves $(n-1)$-dimensional
normal cones at points in the relative interiors of edges, and
are also the intersection of exactly two full-dimenional normal cones
at vertices $z_1, z_2$ (see~\citep[Proposition 1(b)]{lu_NormalFansPolyhedral_2008}).
Because the normal cone is the same for any point in $\rint(f)$ for an edge $f \in \mathcal{B}$, we denote by $N(f)$
the normal cone associated with the edge $f$. Furthermore, an inward normal $s^i$ associated
with a facet is orthogonal to the facet, and so is $z_1 - z_2$.
Therefore, these two vectors are linearly dependent. Because of this connection,
we define $s_f$ to be a unit normal in $\Span(f-f)$. (There are two possible such vectors, 
we pick any one of them, as the orientation will not matter in the next result.)

\begin{corollary}[Linear perturbed polytope regularization]
\label{loc:linear_perturbed_polytope_regularization.statement}
With $\psi(x) = p \cdot x$, any polytope $Q \subseteq \mathbb{R}^n$, and $\mathcal{B}$ the set of edges 
of $Q$,
\begin{equation*}
\prob(\mathrm{ER}(\varepsilon)^c) \le \textstyle\varepsilon\frac{ e}{\sqrt{2\pi}} \sum_{f \in \mathcal{B}} \gamma_{\mathrm{rel}}(N(f))\,\,  |s_f  \cdot p|\, \,  \exp \left( \varepsilon\sqrt{n-1} \|p\|\right).
\end{equation*}
\end{corollary}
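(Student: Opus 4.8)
The plan is to apply Theorem \ref{loc:applied_margin_measure_for_exact_regularization.statement} at every vertex, sum the resulting upper bounds, and re-index the double sum over (vertex, facet) pairs as a single sum over edges of $Q$. First I would note that for the linear regularizer $\psi(x) = p\cdot x$ we have $\partial\psi(z) = \{p\}$ at every vertex, so the first conclusion of Theorem \ref{loc:applied_margin_measure_for_exact_regularization.statement} applies with $v = p$, giving
\begin{equation*}
\gamma(M(N(z),\partial(\varepsilon\psi)(z))) \le 2\varepsilon F(N(z),p)\exp(\varepsilon\sqrt{n-1}\,\|p\|)
\end{equation*}
for each $z\in\vrt(Q)$. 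Summing over $z$ and invoking Theorem \ref{loc:geometric_probabilistic_results.statement} (via the identity $\prob(\mathrm{ER}^c(\varepsilon)) = \sum_{z}\gamma(M(N(z),\partial(\varepsilon\psi)(z)))$) yields
\begin{equation*}
\prob(\mathrm{ER}^c(\varepsilon)) \le 2\varepsilon\exp(\varepsilon\sqrt{n-1}\,\|p\|)\sum_{z\in\vrt(Q)} F(N(z),p),
\end{equation*}
where by definition $F(N(z),p) = (\sqrt{2\pi})^{-1}\sum_{i}\gamma_{\mathrm{rel}}(T^i_z)(s^i_z\cdot p)_+$, the inner sum running over the facets $T^i_z$ of $N(z)$ with inward unit normals $s^i_z$.

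The combinatorial heart of the argument is the re-indexing. As recalled in the text preceding the corollary (citing \cite{lu_NormalFansPolyhedral_2008}), each facet of a vertex normal cone $N(z)$ is an $(n-1)$-dimensional normal cone $N(f)$ for a unique edge $f\in\mathcal B$, and conversely each edge $f=\conv(z_1,z_2)$ contributes exactly one facet to $N(z_1)$ and one to $N(z_2)$ — namely $N(f)$ itself, sitting in the two cones with opposite inward normals $\pm s_f$. Therefore
\begin{equation*}
\sum_{z\in\vrt(Q)} F(N(z),p)
= \frac{1}{\sqrt{2\pi}}\sum_{z\in\vrt(Q)}\sum_{\text{facets }T^i_z} \gamma_{\mathrm{rel}}(T^i_z)(s^i_z\cdot p)_+
= \frac{1}{\sqrt{2\pi}}\sum_{f\in\mathcal B} \gamma_{\mathrm{rel}}(N(f))\big[(s_f\cdot p)_+ + (-s_f\cdot p)_+\big],
\end{equation*}
and since $(a)_+ + (-a)_+ = |a|$ for any scalar $a$, the bracketed term collapses to $|s_f\cdot p|$. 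Substituting back gives exactly
\begin{equation*}
\prob(\mathrm{ER}^c(\varepsilon)) \le \varepsilon\frac{e}{\sqrt{2\pi}}\sum_{f\in\mathcal B}\gamma_{\mathrm{rel}}(N(f))\,|s_f\cdot p|\,\exp(\varepsilon\sqrt{n-1}\,\|p\|),
\end{equation*}
where I have absorbed the leading constant $2$ into the stated $e$ (here $2\le e$, so the bound with $e$ is weaker and still valid — I would double-check whether the paper actually wants $e$ or $2$; if $e$, the inequality $2 < e$ closes it immediately).

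The main obstacle is making the two bookkeeping facts fully rigorous: that the facet–edge correspondence is a genuine bijection (no facet of a vertex normal cone fails to be full-dimensional within its span, no edge of $Q$ is ``missed,'' and the normal cone $N(f)$ is well-defined independently of the chosen point in $\rint(f)$), and that the two inward normals of $N(f)$ as a facet of $N(z_1)$ versus $N(z_2)$ are genuinely antipodal — this is where the $(a)_+ + (-a)_+ = |a|$ trick needs the sign structure to be exactly right. Both facts are supplied by \cite[Proposition 1(b)]{lu_NormalFansPolyhedral_2008} together with the observation (already made in the text) that $s_f \in \Span(f-f)^\perp$ is determined up to sign, so the orientation ambiguity is precisely what the absolute value absorbs. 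Once the bijection and the sign reversal are in hand, the rest is the routine substitution above, and the exponential factor and the factor $\varepsilon$ pass through the sum untouched since they do not depend on $z$.
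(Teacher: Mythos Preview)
Your proposal is correct and follows essentially the same route as the paper: apply Theorem~\ref{loc:applied_margin_measure_for_exact_regularization.statement} with $v=p$ at each vertex, sum, and re-index over edges using that each facet $N(f)$ appears for exactly two vertices with opposite inward normals so that $(s_f\cdot p)_+ + (-s_f\cdot p)_+ = |s_f\cdot p|$. Two minor remarks: the paper's constant $e$ is indeed weaker than the $2$ your derivation produces (so your observation $2<e$ is the right way to close it), and your parenthetical ``$s_f\in\Span(f-f)^\perp$'' should read $s_f\in\Span(f-f)$, since $s_f$ is orthogonal to the facet $N(f)$ and hence lies along the edge direction---this slip does not affect the argument.
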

\begin{proof}[\hypertarget{loc:linear_perturbed_polytope_regularization.proof}Proof of \Cref{loc:linear_perturbed_polytope_regularization.statement}]
Denote by $T_z^i$ the $i^{\text{th}}$ facet of the normal cone $N(z)$,
and by $s_z^i$ the corresponding inward normal. 
As argued above, $s_z^i \approx s_f$ up to a sign, for $f \in \mathcal{B}$ the
edge for which $T_z^i = N(f)$. To apply \Cref{loc:applied_margin_measure_for_exact_regularization.statement},
we compute
\begin{align*}
\textstyle\sum_{z \in \vrt(Q)} F(N(z), \varepsilon p)
&= \textstyle\sum_{z \in \vrt(Q)}  \frac{1}{\sqrt{2\pi}}\sum_{i = 1}^{\ell_z} \gamma_{\mathrm{rel}}(T_z^i) (\varepsilon p \cdot s^i_z)_+\\
&= \textstyle\frac{\varepsilon}{\sqrt{2\pi}}\sum_{f \in \mathcal{B}} \gamma_{\mathrm{rel}}(N(f)) | p \cdot s_f|.
\end{align*}
As discussed above, each facet $T_z^i$ appears twice in the sum because 
each facet is a face for exactly two full-dimensional normal cones. 
When we pair the two terms related to a common edge $f$, the inward
normals in these two terms satisfy $s_{z_1}^i =  -s_{z_2}^j$ because they point inward to different
full-dimensional normal cones.
Therefore, the factor $(\varepsilon p \cdot s_z^i)_+$
will take a value of $\varepsilon |p \cdot s_z^i|$ for one of the two terms and $0$ for the other, which justifies the result.
\end{proof}
\section{\texorpdfstring{$\Binf$}{B-infinity} ball constraint}
\label{loc:body.b_infty_ball_constraint}
We analyze the case where $Q = \Binf$ to gain a better understanding of the bounds introduced so far. The polytope $\Binf$ resembles
the feasible region of linear programs because it can be specified by $2n$ linear constraints, and has an
exponential number of vertices. At the same time, it is 
amenable to analysis because of its simplicity.
We first consider the quadratic (Tikhonov) regularization, where $\psi(x) = \frac{1}{2}\|x\|^2$. We will assess the quality of our bounds in this setting by comparing them to the following lower-bound on
$\prob(\mathrm{ER}^c(\varepsilon))$.
\begin{proposition}[Lower bound for $\Binf$]
\label{loc:tightness_of_the_loo_ball_case.statement}
For $P_0$ with $Q = \Binf$ and
$\psi = \frac{1}{2}\|\cdot\|^2$,
\begin{equation*}
\prob(\mathrm{ER}^c(\varepsilon))  \ge 1-\exp\left(-\sqrt{\tfrac{2}{\pi e}} n \varepsilon \right),
\end{equation*}
and for $\varepsilon \leq \sqrt{\frac{(\pi e)}{2}} \frac{1}{2n}$,
\begin{equation*}
\prob(\mathrm{ER}^c(\varepsilon))  \ge \tfrac{1}{\sqrt{2\pi e}} n \varepsilon.
\end{equation*}
\end{proposition}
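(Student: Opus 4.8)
The plan is to collapse the proposition to a single scalar tail estimate---made possible by the product structure of both $Q=\Binf$ and the Gaussian---and then verify that estimate by an elementary regime split.

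First I would pin down the geometry. The vertices of $\Binf$ are the sign patterns in $\{-1,1\}^n$, and since minimizing $-g\cdot x$ over $\Binf$ decouples across coordinates, the a.s.\ unique solution of $P_0$ (\Cref{loc:uniqueness_of_solution_in_a_random_linear_program.statement}) is $z^\star=\sgn(g)$, whose normal cone $N(z^\star)=\{v: v_j z^\star_j\ge 0\ \forall j\}$ is the orthant aligned with $z^\star$. Because $\psi=\tfrac12\|\cdot\|^2$ is differentiable with $\nabla\psi(z)=z$, and \eqref{eq:regularity_relint} holds trivially ($\dom\psi=\mathbb{R}^n$), the optimality condition \eqref{eq:optimality_peps} is an equivalence: $z^\star$ solves $P_\varepsilon$ iff $g-\varepsilon z^\star\in N(z^\star)$, i.e., iff $(g_j-\varepsilon z^\star_j)z^\star_j=|g_j|-\varepsilon\ge 0$ for every $j$. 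Hence $\ER(\varepsilon)$ is exactly the event $\{\min_j|g_j|\ge\varepsilon\}$, and by independence $\prob(\ER(\varepsilon))=\prob(|g_1|\ge\varepsilon)^n=:h(\varepsilon)^n$. Since $\prob(\ER^c(\varepsilon))=1-h(\varepsilon)^n$, the first inequality reduces to showing $h(\varepsilon)\le\exp(-\sqrt{2/(\pi e)}\,\varepsilon)$ for all $\varepsilon\ge 0$.

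To prove this scalar bound I would split at $\varepsilon=1$. For $\varepsilon\in[0,1]$, bounding the density of $g_1$ on $[-\varepsilon,\varepsilon]$ below by its value at $1$ gives $\prob(|g_1|<\varepsilon)\ge 2\varepsilon\cdot\tfrac1{\sqrt{2\pi}}e^{-1/2}=\sqrt{2/(\pi e)}\,\varepsilon$, so $h(\varepsilon)\le 1-\sqrt{2/(\pi e)}\,\varepsilon\le\exp(-\sqrt{2/(\pi e)}\,\varepsilon)$. For $\varepsilon>1$, the sharp one-sided bound $\prob(g_1\ge\varepsilon)\le\tfrac12 e^{-\varepsilon^2/2}$ (from the substitution $t=\varepsilon+s$ in the tail integral together with $e^{-\varepsilon s}\le 1$) gives $h(\varepsilon)\le e^{-\varepsilon^2/2}$, and since $\varepsilon>1>2\sqrt{2/(\pi e)}$ we have $\varepsilon^2/2\ge\sqrt{2/(\pi e)}\,\varepsilon$, hence the same bound. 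Raising to the $n$-th power yields $\prob(\ER(\varepsilon))\le\exp(-\sqrt{2/(\pi e)}\,n\varepsilon)$, which is the first inequality. For the second, observe that $\varepsilon\le\sqrt{\pi e/2}/(2n)$ is exactly $x:=\sqrt{2/(\pi e)}\,n\varepsilon\le\tfrac12$; using $1-e^{-x}\ge x-x^2/2\ge\tfrac12 x$ on $[0,1]$ gives $\prob(\ER^c(\varepsilon))\ge 1-e^{-x}\ge\tfrac12\sqrt{2/(\pi e)}\,n\varepsilon=\tfrac1{\sqrt{2\pi e}}\,n\varepsilon$, where the final equality is just $\tfrac12\sqrt{2/(\pi e)}=1/\sqrt{2\pi e}$.

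Nothing here is a serious obstacle; the one thing to handle carefully is the regime split. One must check that the crossover point $2\sqrt{2/(\pi e)}\approx 0.968$ falls below $1$, so that the small- and large-$\varepsilon$ cases together cover all of $[0,\infty)$, and that splitting precisely at $\varepsilon=1$ is what makes the crude density bound evaluate to the stated constant $\sqrt{2/(\pi e)}$. (Measurability of $\ER^c(\varepsilon)$ and the use of \eqref{eq:optimality_peps} as an equivalence under regularity are already addressed in the text.)
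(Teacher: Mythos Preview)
Your proposal is correct and follows essentially the same route as the paper: reduce $\prob(\ER(\varepsilon))$ to $\prob(|g_1|\ge\varepsilon)^n$, bound $\prob(|g_1|<\varepsilon)\ge\sqrt{2/(\pi e)}\,\varepsilon$ on $[0,1]$ via the density lower bound, use $1-x\le e^{-x}$, handle $\varepsilon>1$ with a Gaussian tail bound, and then apply an elementary inequality for the second display. The only (inconsequential) difference is that you establish the characterization $\ER(\varepsilon)=\{\min_j|g_j|\ge\varepsilon\}$ as an equivalence, whereas the paper only uses it as a necessary condition---both suffice for the required upper bound on $\prob(\ER(\varepsilon))$.
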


Since $\Binf$ has its vertices on $\sqrt{n} \sphere{n}$, and the gradient at each vertex $z$ is $z$ itself, the gradient is contained in $N(z)= \{ x  \in \mathbb{R}^n \mid \sgn(x) = \sgn(z)\}$.
So we may use \Cref{loc:corollary_of_the_inward_shift_result.statement} to
derive the following.
\begin{corollary}[Bound from vertices on the sphere]
\label{loc:boo_from_points_on_sphere.statement}
For problem $P_0$ with $Q = \Binf$ and
$\psi(x) = \frac{1}{2}\|x\|^2$,
\begin{equation*}
\mathbb{P}[\mathrm{ER}^c(\varepsilon)] \leq \varepsilon n + \tfrac{1}{2} \varepsilon^2 n.
\end{equation*}
\end{corollary}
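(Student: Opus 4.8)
The plan is to derive this as a direct specialization of \Cref{loc:corollary_of_the_inward_shift_result.statement}; essentially all of the work is verifying the hypotheses of that corollary for $Q=\Binf$ and $\psi=\tfrac12\|\cdot\|^2$ and then evaluating the constant $B$. First I would record the relevant polyhedral data for $\Binf$. Its vertices are exactly the sign vectors $z\in\{-1,1\}^n$; at such a $z$ the active constraints are $z_i x_i\le 1$ for $i\in[n]$, with rows $a_i=z_i e_i$, so $z$ is nondegenerate and the normal cone is $N(z)=\cone\{z_i e_i : i\in[n]\}=\{x\in\mathbb{R}^n : z_i x_i\ge 0\ \forall i\}$, i.e. the closed orthant with the same sign pattern as $z$ (as already noted in the paragraph preceding the statement). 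The regularizer is differentiable everywhere with $\nabla\psi(z)=z$, and since $z_i\cdot z_i=z_i^2=1\ge 0$ for every $i$, we have $\nabla\psi(z)=z\in N(z)$. Hence the membership condition $\partial\psi(z)\cap N(z)\neq\emptyset$ holds at every vertex, with the natural choice $v_z=z$.

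Next I would check the remaining hypothesis and compute $B$. The regularity condition \eqref{eq:regularity_relint} is immediate because $\dom\psi=\mathbb{R}^n$, so $\mathrm{relint}(\dom\psi)\cap Q=Q\neq\emptyset$. With the choice $v_z=z$ we get $\|v_z\|=\|z\|=\sqrt{n}$ for every vertex, since all vertices of $\Binf$ lie on the sphere $\sqrt{n}\,\mathbb{S}^{n-1}$. Therefore $B=\max\{\|v_z\| : z\in\vrt(\Binf)\}=\sqrt{n}$.

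Finally I would substitute $B=\sqrt{n}$ into the (polynomial) upper bound of \Cref{loc:corollary_of_the_inward_shift_result.statement},
$\mathbb{P}[\mathrm{ER}^c(\varepsilon)]\le \tfrac12\varepsilon^2 B^2+\varepsilon B\sqrt{n}$, obtaining $\mathbb{P}[\mathrm{ER}^c(\varepsilon)]\le \tfrac12\varepsilon^2 n+\varepsilon n$, which is the claimed inequality. There is no genuine obstacle here: the only point requiring a moment's care is confirming the gradient--normal-cone alignment $z\in N(z)$ at each vertex (equivalently, that the orthant $N(z)$ contains its own ``central direction''), and this is immediate from the sign description of $N(z)$.
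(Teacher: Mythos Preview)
Your proposal is correct and matches the paper's approach exactly: the paper derives this corollary by noting that the vertices of $\Binf$ lie on $\sqrt{n}\,\mathbb{S}^{n-1}$ with $\nabla\psi(z)=z\in N(z)$, and then invoking \Cref{loc:corollary_of_the_inward_shift_result.statement} with $B=\sqrt{n}$. Your verification of the hypotheses (membership condition, regularity, and the value of $B$) is precisely what is needed and nothing more.
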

It turns out that for $\varepsilon < n^{-1}$, the first term dominates, and this bound is optimal up to a constant factor of no more than $\sqrt{2\pi e}$.

What happens if we instead use \Cref{loc:applied_margin_measure_for_exact_regularization.statement}?
To apply this result, we need the relative Gaussian measure of the
facets of the normal cones of $\Binf$. The normal cones are the orthants of
$\mathbb{R}^n$, and their facets are orthants of
the axis-aligned hyperplanes. An axis aligned hyperplane is of the form
$\{x \in \mathbb{R}^n \mid x  \cdot  e_i = 0\}$ for a canonical basis vector $e_i$. 
There are exactly $n$ such hyperplanes,  and each hyperplane has exactly
$2^{n-1}$ orthants. Therefore, 
the relative Gaussian measure of each facet of the normal cones is
$2^{-(n-1)}$, by rotational symmetry. With $F$ as in \Cref{loc:gaussian_measure_of_the_margin.statement},
\begin{align*}
F\left(N(z), \varepsilon z\right) &=  \tfrac{1}{\sqrt{2\pi}}\textstyle\sum_{i = 1}^n \left(\frac{1}{2}\right)^{n-1} \left(\varepsilon z \cdot (z_i e_i)\right)_+
 =  \frac{1}{\sqrt{2\pi}}\sum_{i = 1}^n \left(\frac{1}{2}\right)^{n-1} \varepsilon z_i^2.
\end{align*}
Then,
\begin{equation}\label{eq:bound-from-margin}
\begin{aligned}
\prob(\mathrm{ER}^c(\varepsilon)) &\le 2\textstyle\sum_{z \in \{-1, 1\}^n} F\left(N(z), \varepsilon z\right)\exp \left( \varepsilon\sqrt{n-1}  \cdot  \sqrt{n}\right)\\
 & = 2  \cdot 2^n \left[\tfrac{1}{\sqrt{2\pi}} \left(\tfrac{1}{2}\right)^{n-1} \varepsilon \|z\|^2\right] \exp \left( \varepsilon\sqrt{n-1}  \cdot \sqrt{n}\right)\\
&\le \tfrac{4}{\sqrt{2\pi}} \varepsilon n \exp \left( \varepsilon n \right).
\end{aligned}
\end{equation}
We have derived the following.
\begin{corollary}[Bound from the margin]
\label{loc:boo_bound_from_margin.statement}
For problem $P_0$ with $Q = \Binf$ and
$\psi(x) = \frac{1}{2}\|x\|^2$,~\eqref{eq:bound-from-margin} holds.
\end{corollary}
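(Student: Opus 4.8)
The statement is essentially a bookkeeping consequence of machinery already assembled, so the plan is to specialize \Cref{loc:geometric_probabilistic_results.statement} and \Cref{loc:applied_margin_measure_for_exact_regularization.statement} (equivalently \Cref{loc:gaussian_measure_of_the_margin.statement}) to $Q=\Binf$ and $\psi=\tfrac12\|\cdot\|^2$, and then compute the single geometric quantity that appears, namely $F(N(z),\varepsilon z)$ from~\eqref{eq:measure-of-margin}.

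First I would record the polyhedral data of $\Binf$. Its vertices are $\vrt(Q)=\{-1,1\}^n$; at each $z$ the regularizer is differentiable with $\nabla\psi(z)=z$, and since $N(z)=\{x:z_ix_i\ge 0\ \forall i\}$ we have $z\in N(z)$, so the hypotheses of \Cref{loc:applied_margin_measure_for_exact_regularization.statement} hold with $v=\nabla\psi(z)=z$, while \eqref{eq:regularity_relint} is automatic because $\dom\psi=\mathbb{R}^n$, so \Cref{loc:geometric_probabilistic_results.statement} applies. The $i$th facet of the orthant $N(z)$ has inward unit normal $s^i=z_i e_i$, lies in the coordinate hyperplane $\{x_i=0\}\cong\mathbb{R}^{n-1}$, and is one of its $2^{n-1}$ orthants, whence $\gamma_{\mathrm{rel}}(T^i)=2^{-(n-1)}$ by rotational symmetry; moreover $(s^i\cdot\varepsilon z)_+=\varepsilon z_i^2=\varepsilon$ and $\|z\|=\sqrt n$. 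Substituting these into~\eqref{eq:measure-of-margin} gives $F(N(z),\varepsilon z)=\tfrac{1}{\sqrt{2\pi}}\,2^{-(n-1)}\varepsilon\|z\|^2=\tfrac{1}{\sqrt{2\pi}}\,2^{-(n-1)}\varepsilon n$, the same value at every vertex.

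Then I would chain the bounds. \Cref{loc:geometric_probabilistic_results.statement} writes $\prob(\mathrm{ER}^c(\varepsilon))=\sum_{z\in\{-1,1\}^n}\gamma(M(N(z),\partial(\varepsilon\psi)(z)))$; \Cref{loc:applied_margin_measure_for_exact_regularization.statement} bounds each summand by $2F(N(z),\varepsilon z)\exp(\varepsilon\sqrt{n-1}\sqrt n)$; summing the $2^n$ identical contributions and using $2\cdot 2^n\cdot 2^{-(n-1)}=4$ gives $\prob(\mathrm{ER}^c(\varepsilon))\le\tfrac{4}{\sqrt{2\pi}}\varepsilon n\exp(\varepsilon\sqrt{n(n-1)})$, and finally $\sqrt{n(n-1)}\le n$ yields the last line of~\eqref{eq:bound-from-margin}. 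There is no genuinely hard analytic step here; the only care needed is the combinatorial bookkeeping — reading off $s^i=z_i e_i$ correctly (so that each $(s^i\cdot\varepsilon z)_+$ equals $\varepsilon$ rather than $0$, which is what keeps the margin nonnegligible) and the powers-of-two arithmetic that cancels both the $z$-dependence and the vertex count.
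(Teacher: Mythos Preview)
Your proposal is correct and follows the paper's argument essentially verbatim: specialize \Cref{loc:applied_margin_measure_for_exact_regularization.statement} (or equivalently \Cref{loc:gaussian_measure_of_the_margin.statement} with $w=\varepsilon z$) to the orthant normal cones of $\Binf$, read off $s^i=z_ie_i$ and $\gamma_{\mathrm{rel}}(T^i)=2^{-(n-1)}$, sum the $2^n$ identical contributions, and finish with $\sqrt{n(n-1)}\le n$. The only cosmetic point is that \Cref{loc:applied_margin_measure_for_exact_regularization.statement} is stated with $v\in\partial\psi(z)$ and an explicit factor of $\varepsilon$ in front, so strictly speaking it gives $2\varepsilon F(N(z),z)\exp(\varepsilon\sqrt{n-1}\|z\|)$, which equals your $2F(N(z),\varepsilon z)\exp(\varepsilon\sqrt{n-1}\sqrt n)$ by linearity of $F$ in its second argument.
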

This bound is optimal up to a constant. It has a better constant than the bound in \Cref{loc:boo_from_points_on_sphere.statement}, but with an additional exponential factor of approximately $1$ in the regime of interest: $\varepsilon < 1/n$.

We now turn our attention to the setting where $\psi(x) = p \cdot x$. In this
setting, the conditions of \Cref{loc:gaussian_measure_of_the_margin_resulting_from_an_inward_shift.statement} do not hold, and therefore we need the assistance of the representer vector in \Cref{loc:bound_via_representer_vectors.statement}. Because $Q=\Binf$, the representer vector $\tilde{p}$ of $p$ for a specific normal cone is in fact the projection of $p$ on to that normal cone. Because of this, $\|\tilde{p}\| \le \|p\|$, and with \Cref{loc:gaussian_measure_of_shifted_cones.statement},
\begin{align*}
\prob(\mathrm{ER}(\varepsilon)) &\ge \textstyle\sum_{z \in \{-1, 1\}^n} \gamma_{\mathrm{rel}}(N_{\Binf}(z)) \exp\left( - \frac12\varepsilon^2\|\tilde{p}\|^2 - \varepsilon\|\tilde{p}\|\sqrt{n} \right)\\
 & \ge   \exp\left( - \tfrac12\varepsilon^2\|p\|^2 - \varepsilon\|p\|\sqrt{n} \right) \ge  1  -  \tfrac12\varepsilon^2\|p\|^2 - \varepsilon\|p\|\sqrt{n}.
\end{align*}
\begin{corollary}[Bound with the representer vector]
\label{loc:linear_perturbed_boo_from_inner_cone.statement}
For problem $P_0$ with $Q = B_\infty$ and
$\psi(x) = p  \cdot x$,
\begin{equation*}
\prob(\mathrm{ER}^c(\varepsilon))  \le \varepsilon\|p\|\sqrt{n} + \tfrac12\varepsilon^2\|p\|^2.
\end{equation*}
\end{corollary}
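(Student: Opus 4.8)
The plan is to apply the representer-vector lower bound \Cref{loc:bound_via_representer_vectors.statement} to each orthant normal cone of $\Binf$, exploiting the fact that for $\Binf$ the facet matrix $S_T$ is orthogonal, so the representer vector is literally a Euclidean projection and has norm at most $\varepsilon\|p\|$. First I would record the geometry of $\Binf$: its vertices are the sign vectors $z\in\{-1,1\}^n$; at such a vertex the $n$ active rows of the constraint matrix are $z_1e_1^T,\dots,z_ne_n^T$, so $A_{J(z)}=\diag(z)$ is invertible, every vertex is nondegenerate, and $N(z)=\diag(z)\,\mathbb{R}^n_+=\{y:z_iy_i\ge 0\ \text{for all}\ i\}$ is the orthant with sign pattern $z$. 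Taking the inward unit normals to be the normalized columns of $A_{J(z)}^{-1}=\diag(z)$ gives $S_T=\diag(z)$, which is orthogonal.

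Since $\psi(x)=p\cdot x$ is linear and finite everywhere, $\partial(\varepsilon\psi)(z)=\{\varepsilon p\}$ for every vertex (for $\varepsilon=0$ the claimed inequality is trivial, so take $\varepsilon>0$), the regularity condition \eqref{eq:regularity_relint} holds, and \Cref{loc:geometric_probabilistic_results.statement} gives $\prob(\ER(\varepsilon))=\sum_{z\in\{-1,1\}^n}\gamma(N(z)\cap[N(z)+\varepsilon p])$. Next I would invoke \Cref{loc:bound_via_representer_vectors.statement} with $w=\varepsilon p$: the representer is $\tilde p_z:=S_T^{-1}(S_T\,\varepsilon p)_+=\diag(z)(\diag(z)\,\varepsilon p)_+$, whose $i$th coordinate equals $\varepsilon p_i$ when $\sgn(z_i)=\sgn(p_i)$ and $0$ otherwise, so $\tilde p_z$ is exactly the Euclidean projection $\proj_{N(z)}(\varepsilon p)$ of $\varepsilon p$ onto the orthant $N(z)$. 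Because $S_T$ is orthogonal, $\mathrm{cond}(S_T)=1$, and the estimate $\|\tilde w\|\le\mathrm{cond}(S_T)\|w\|$ from \Cref{loc:body.main_results.bounds_via_representer_vectors} (equivalently, nonexpansiveness of the projection, since $0\in N(z)$) yields $\|\tilde p_z\|\le\|\varepsilon p\|=\varepsilon\|p\|$ uniformly in $z$. Then \Cref{loc:bound_via_representer_vectors.statement} gives $\gamma(N(z)\cap[N(z)+\varepsilon p])\ge\gamma(N(z))\exp(-\tfrac12\|\tilde p_z\|^2-\|\tilde p_z\|\sqrt n)$.

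To conclude, I would use that $t\mapsto\exp(-\tfrac12 t^2-t\sqrt n)$ is nonincreasing on $[0,\infty)$ to replace $\|\tilde p_z\|$ by the uniform upper bound $\varepsilon\|p\|$ in every summand, pull the resulting common factor out of the sum, and use $\sum_{z\in\{-1,1\}^n}\gamma(N(z))=1$ (the orthants tile $\mathbb{R}^n$ up to a Lebesgue-null set by \Cref{loc:normal_cones_and_faces.statement}, or directly by \Cref{loc:uniqueness_of_solution_in_a_random_linear_program.statement}), obtaining $\prob(\ER(\varepsilon))\ge\exp(-\tfrac12\varepsilon^2\|p\|^2-\varepsilon\|p\|\sqrt n)$; taking complements and applying $1-e^{-t}\le t$ for $t\ge 0$ then gives the stated bound. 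I expect the only step requiring genuine care, as opposed to bookkeeping, to be the identification of $S_T^{-1}(S_T\,\varepsilon p)_+$ with the orthant projection $\proj_{N(z)}(\varepsilon p)$ and the consequent uniform norm control $\|\tilde p_z\|\le\varepsilon\|p\|$ with no condition-number loss; the remaining ingredients (the vertex decomposition, monotonicity of the exponential, and $1-e^{-t}\le t$) are routine.
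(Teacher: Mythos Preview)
Your proposal is correct and follows essentially the same route as the paper: identify the representer vector for each orthant normal cone of $\Binf$ as the Euclidean projection $\proj_{N(z)}(\varepsilon p)$, use the resulting uniform bound $\|\tilde p_z\|\le\varepsilon\|p\|$ in \Cref{loc:bound_via_representer_vectors.statement}, sum over vertices via $\sum_{z}\gamma(N(z))=1$, and finish with $1-e^{-t}\le t$. Your write-up is simply more explicit about the geometry (computing $S_T=\diag(z)$ and verifying nondegeneracy) than the paper's terse inline argument.
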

We apply the bound in \Cref{loc:linear_perturbed_polytope_regularization.statement}
to the same setting. 
Recall that all $(n-1)$-dimensional normal cones of the $\Binf$ ball have a
relative Gaussian measure of $2^{-(n-1)}$, and that there are $n2^{n-1}$ such
normal cones (one for each edge). The edge vectors $s_f$ are the
canonical basis vectors. We find the following result.
\begin{corollary}[Linear perturbed $\Binf$ from the margin bound]
\label{loc:linear_perturbed_boo_from_margin.statement}
For problem $P_0$ with $Q = \Binf$ and
$\psi(x) = p  \cdot x$,
\begin{equation*}
\prob(\mathrm{ER}^c(\varepsilon)) \le \tfrac{1}{\sqrt{2\pi}}\varepsilon\|p\|_1\exp \left( \varepsilon\sqrt{n-1} \|p\|\right).
\end{equation*}
\end{corollary}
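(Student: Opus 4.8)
The plan is to instantiate \Cref{loc:linear_perturbed_polytope_regularization.statement} at $Q=\Binf$, where the edge structure is completely explicit, and reduce the resulting edge sum to $\|p\|_1$. Recall that \Cref{loc:linear_perturbed_polytope_regularization.statement} with $\psi(x)=p\cdot x$ bounds $\prob(\ER^c(\varepsilon))$ by a constant times $\varepsilon\bigl(\sum_{f\in\mathcal{B}}\gamma_{\mathrm{rel}}(N(f))\,|s_f\cdot p|\bigr)\exp(\varepsilon\sqrt{n-1}\,\|p\|)$, so the whole computation is the evaluation of that sum.

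First I would enumerate the edges of $\Binf=[-1,1]^n$: an edge is obtained by fixing $n-1$ coordinates to values in $\{\pm1\}$ and letting the remaining coordinate---say the $j$th---range over $[-1,1]$; thus $|\mathcal{B}|=n\,2^{n-1}$, an edge with free coordinate $j$ has direction $\mathbb{R}e_j$ (so $s_f=e_j$ and $|s_f\cdot p|=|p_j|$), and exactly $2^{n-1}$ edges share a given free coordinate. Second I would record that $\gamma_{\mathrm{rel}}(N(f))=2^{-(n-1)}$ for every $f\in\mathcal{B}$: the normal cone $N(f)$ of an edge with free coordinate $j$ is an orthant of the coordinate hyperplane $\{e_j\}^{\perp}\cong\mathbb{R}^{n-1}$, whose relative Gaussian measure is $2^{-(n-1)}$ by rotational invariance of the standard Gaussian. (Both facts are stated in the paragraph preceding the corollary.) Grouping the edges by free coordinate then gives
\[
\textstyle\sum_{f\in\mathcal{B}}\gamma_{\mathrm{rel}}(N(f))\,|s_f\cdot p|=\sum_{j=1}^{n}2^{n-1}\cdot 2^{-(n-1)}\,|p_j|=\|p\|_1,
\]
and substituting into \Cref{loc:linear_perturbed_polytope_regularization.statement} produces the claimed bound (up to the value of the leading constant, which that corollary carries as $e/\sqrt{2\pi}$).

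There is no genuine analytic obstacle---the statement is a specialization of an already-proved corollary---so the only care needed is the combinatorics above, namely that each coordinate direction is realized by exactly $2^{n-1}$ edges and that every edge normal cone has relative measure $2^{-(n-1)}$, which together make the edge sum collapse to $\|p\|_1$. To pin the leading constant to $1/\sqrt{2\pi}$ I would instead argue directly on $\Binf$: write $\prob(\ER^c(\varepsilon))=\sum_{z\in\{\pm1\}^n}\gamma(M(N(z),\varepsilon p))$ with each $N(z)$ an orthant; the $j$th facet of $N(z)$ contributes to the margin only when $\sgn(z_j)=\sgn(p_j)$, and the union bound over such $j$ gives $\gamma(M(N(z),\varepsilon p))\le\sum_{j:\,\sgn(z_j)=\sgn(p_j)}2^{-(n-1)}\,\prob(0\le g_j<\varepsilon|p_j|)$; since $\prob(0\le g_j<\varepsilon|p_j|)\le\varepsilon|p_j|/\sqrt{2\pi}$ and each coordinate $j$ is activated by exactly $2^{n-1}$ vertices $z$, summing over $z$ yields $\prob(\ER^c(\varepsilon))\le\sum_{j=1}^n\varepsilon|p_j|/\sqrt{2\pi}=\varepsilon\|p\|_1/\sqrt{2\pi}$, and the factor $\exp(\varepsilon\sqrt{n-1}\,\|p\|)\ge1$ is inserted only to match the form of the general bound.
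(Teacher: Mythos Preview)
Your first approach---instantiate \Cref{loc:linear_perturbed_polytope_regularization.statement} with the explicit edge data of $\Binf$ (edges indexed by a free coordinate $j$ and a choice of signs in $\{\pm1\}^{n-1}$, $s_f=e_j$, $\gamma_{\mathrm{rel}}(N(f))=2^{-(n-1)}$, hence the edge sum collapses to $\|p\|_1$)---is precisely what the paper does in the paragraph immediately preceding the corollary. You are also right to flag the constant: \Cref{loc:linear_perturbed_polytope_regularization.statement} carries $e/\sqrt{2\pi}$, so a literal application does not yield the $1/\sqrt{2\pi}$ stated in \Cref{loc:linear_perturbed_boo_from_margin.statement}; the paper does not reconcile this.

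Your second, direct argument is correct and genuinely different from the paper. Because each $N(z)$ is a coordinate orthant, the margin description
\[
M(N(z),\varepsilon p)=\{x\in N(z):\exists\,j,\ 0\le z_jx_j<\varepsilon z_jp_j\}
\]
together with independence of the Gaussian coordinates gives, via a union bound, $\gamma(M(N(z),\varepsilon p))\le\sum_{j:\,z_jp_j>0}2^{-(n-1)}\prob(0\le g_1<\varepsilon|p_j|)$. Summing over $z\in\{\pm1\}^n$ and using that exactly $2^{n-1}$ vertices activate each coordinate yields $\prob(\ER^c(\varepsilon))\le\sum_j\prob(0\le g_1<\varepsilon|p_j|)\le\varepsilon\|p\|_1/\sqrt{2\pi}$, which is in fact \emph{sharper} than the stated bound (the factor $\exp(\varepsilon\sqrt{n-1}\|p\|)\ge1$ is superfluous here). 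The paper does not give this elementary derivation; it relies entirely on the general margin corollary. What the general route buys is that it applies to any polytope, while your direct orthant computation buys the exact constant and no exponential factor in this particular case.
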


The use of \Cref{loc:linear_perturbed_polytope_regularization.statement} yields a tighter bound relative to the simpler representer-type machinery in this setting. We find a smaller multiplicative constant, but more importantly, the factor of $\lVert p\rVert_1$ is smaller than the factor of $\sqrt{n} \|p\|$ found in \Cref{loc:linear_perturbed_boo_from_inner_cone.statement}. \Cref{loc:linear_perturbed_boo_from_margin.statement} tells us that exact regularization occurs more readily for almost-sparse vectors $p$, i.e., vectors that have their mass concentrated on a small number of entries, as quantified by the ratio $\|p\|_1/(\sqrt{n}\|p\|)$. 

To conclude this section, we repeat the numerical experiments from \Cref{loc:corollary_of_the_inward_shift_result.example} (\Cref{fig:pubfig:250731:birkhoff_prob_expec.pdf}), this time for $Q=\Binf$, with both quadratic $\psi(x)=\frac{1}{2}\|x\|^2$ and linear $\psi(x)=p\cdot x$ regularizations. The vector $p$ is chosen uniformly at random over the sphere $\sqrt{ n }\mathbb{S}^{n-1}$.  For various pairs of $n$ and $\varepsilon$, the problems $P_{0}$ and $P_{\varepsilon}$ are solved for 20 independent realizations of the objective vector $g \sim N(0,I_{n})$, and the quantity $\mathbb{P}(\mathrm{ER}^c(\varepsilon))$ is estimated empirically as the proportion of realizations for which exact regularization fails. Estimated $\mathbb{P}(\mathrm{ER}^c(\varepsilon))$ is plotted over $n$-$\varepsilon$ axes for quadratic (left panel) and linear (right panel) regularizations. The green dash-dot lines in both panels describe approximate level curves for our theoretical bounds, noting that $\varepsilon n$ is the dominant term in the bounds from \Cref{loc:boo_from_points_on_sphere.statement}, \Cref{loc:boo_bound_from_margin.statement} and \Cref{loc:linear_perturbed_boo_from_inner_cone.statement} when $\varepsilon$ is small and $\|p\|=\sqrt{ n }$. As before, both panels show that our theoretical results capture the $\varepsilon$-$n$ relationship (i.e., slopes in the log-scale plots) quite accurately.
\begin{figure}[t]
\centering
\includegraphics[width=\textwidth]{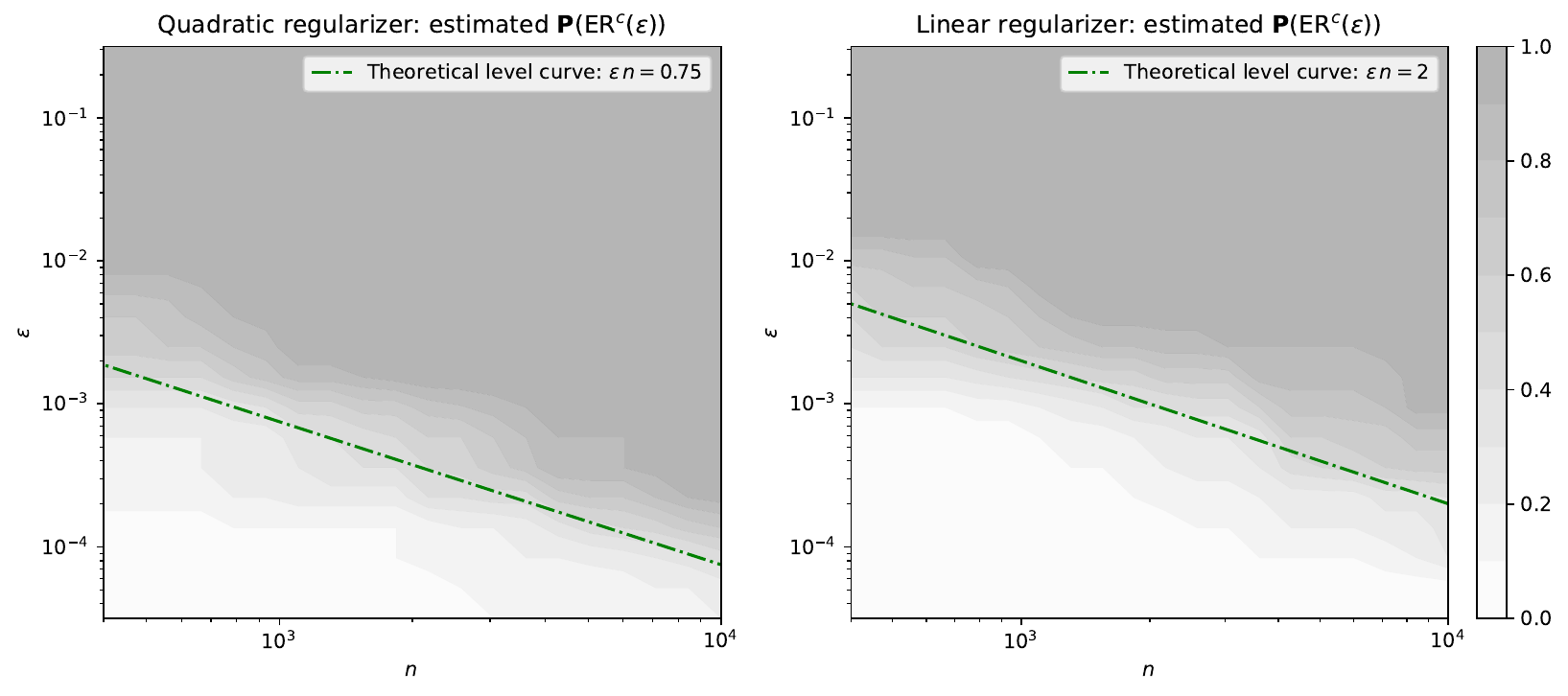}
\caption{\textbf{Probably exact regularization on the $\Binf$ polytope}. The left and right panels plot the empirically observed probability of \emph{failure} of exact regularization over 20 independent trials as a colour scale over $n$-$\varepsilon$ axes, for quadratic and linear regularizations respectively. Approximate level curves for our theoretical bounds are included as green dash-dot lines.
\label{fig:pubfig:250731:contourf:binfty:both:reg:single:colorbar:flipped.pdf}}
\end{figure}
\section{Proofs}
\label{loc:body.proofs}
\begin{proof}[\hypertarget{loc:normal_cones_and_faces.proof}Proof of \Cref{loc:normal_cones_and_faces.statement}]
(1) Recalling the index-set of active constraints $J(z):= \{i \in[m] \mid a_{i}\cdot z= b_{i} \}$ at $z \in Q$, we may express the normal cone as
$N(z) = \conv\cone \set{ a_{i} | i \in J(z) }$.
Now, $\dim N(z) = \dim\,\mathrm{aff}\,N(z)
= \dim \mathrm{span}\,\{ a_{i} \mid i \in J(z) \}.$

This dimension equals $n$ if and only if $z \in \vrt(Q)$.

(2) By \Cref{loc:auxiliary_result_intersections_of_normal_cones.statement}, $N(x) \cap N(z) = N_{Q}\left( \frac{1}{2}(x+z) \right)$. But $\frac{1}{2}(x+z)$ cannot be a vertex of $Q$, and hence by (1), $N_{Q}\left( \frac{1}{2}(x+z) \right)$ is not $n$-dimensional. To prove $N(x) \cap \mathrm{int}\,N(z) = \emptyset$, we use the fact that the intersection $[x + N(x)] \,\cap \, [z + N(z)]$ is empty when $x \neq z$ (this is a consequence of the Euclidean projection being single-valued \cite[Section A.5.3]{hiriart-urruty_FundamentalsConvexAnalysis_2001}). Now suppose, towards contradiction, that there exists a vector $v$ in $N(x) \cap \mathrm{int}\,N(z)$. Then one can find a radius $\rho >0$ such that $v+ \rho \mathbb{B} \subseteq N(z)$, where $\mathbb{B}$ is the Euclidean unit ball in $\mathbb{R}^n$. But because $N(z)$ is a cone, we may scale as $tv + t\rho \mathbb{B} \subseteq N(z)$ for any $t >0$, and in particular, pick a $t$ such that $\|x-z\| \leq t\rho$. In that case, we end up with the inclusion $x+tv \in [x + N(x)] \,\cap \, [z + N(z)]$ because
\begin{equation*}
x+tv = z + (x-z) + tv 
\in z + tv + t\rho \mathbb{B}
\subseteq z + N(z).
\end{equation*}
This contradicts the emptiness of the intersection.

(3) If $Q$ is a polytope, we may express it as $Q=\mathrm{conv}(\vrt(Q))$ \cite[Proposition 2.2]{ziegler_LecturesPolytopes_1995}. Given any $v \in \mathbb{R}^n$, consider the program
\begin{equation*}
\minimize \set{-v\cdot x | x \in Q}. \tag{\ensuremath{P_{v}}}
\end{equation*}
Because $\min_{x \in Q} (-v\cdot x)=\min_{z \in \vrt(Q)}(-v\cdot z)$, there exists a vertex $\overline{z}_{v} \in \vrt(Q)$ that solves ($P_{v}$). As a result, $\overline{z}_{v}$ satisfies the optimality condition $v \in N_{Q}(\overline{z}_{v})$, and it follows that $v \in \cup_{z \in \vrt(Q)} N(z)$ as desired.
\end{proof}%

\begin{proof}[\hypertarget{loc:uniqueness_of_solution_in_a_random_linear_program.proof}Proof of \Cref{loc:uniqueness_of_solution_in_a_random_linear_program.statement}]

For any vertex $z \in \vrt(Q)$, the normal cone $N(z)$ is polyhedral so that its boundary is contained in the union of finitely many hyperplanes. Noting that hyperplanes in $\mathbb{R}^n$ are $n-1$ dimensional, and consequently have zero Gaussian measure, we infer that $\gamma(\mathrm{bd}\,N(z))=0$ and $\gamma(\mathrm{int}\,N(z)) = \gamma(N(z))$. Because \Cref{loc:normal_cones_and_faces.statement} establishes that the interior normal cones partition the space,
\begin{align*}
1 \geq\gamma \left( {\dot\cup_{z \in \vrt(Q)}} \mathrm{int}\,N(z)\right)
&= \textstyle\sum_{z \in \vrt(Q)} \gamma(\mathrm{int}\,N(z)) \\
&\geq \gamma \left( \cup_{z\in\vrt(Q)}N(z) \right) = \gamma(\mathbb{R}^n) = 1,
\end{align*}
so that
$%
\mathbb{P}\left( {\dot\cup_{{z \in \vrt(Q)}}}\{ g \in \mathrm{int}\,N(z) \} \right) = 1.
$ %
Therefore, with probability 1 (on the realizations of $g$), there exists a vertex $z^\star \in \vrt(Q)$ such that $g \in \mathrm{int}\,N(z^\star)$. Such an $z^\star$ is clearly a solution of $P_{0}$. Moreover, it is also the unique solution because $\mathrm{int}\,N(z^\star) \cap N(x) = \emptyset$ for any other $x \in Q$ so that $g \notin N(x)$.

One can express the solution as a sum of Bernoulli indicators
\begin{equation*}
z^\star = \textstyle\sum_{z \in \vrt(Q)} z\,\indicator_{\{ g \in \mathrm{int}\,N(z) \}}
\end{equation*}
which is clearly a (measurable) random vector. Also,
\begin{equation*}
\mathbb{P}\{ z^\star = z \} = \mathbb{P}\left\{ g \in \mathrm{int}\,N(z) \right\} = \gamma(\mathrm{int}\,N(z)) = \gamma(N(z)).
\end{equation*}
\end{proof}

\begin{proof}[\hypertarget{loc:gaussian_measure_of_shifted_cones.proof}Proof of \Cref{loc:gaussian_measure_of_shifted_cones.statement}]

\textbf{Upper bound}.
We compute the Gaussian measure of $V + w$ for any cone $V \subseteq \mathbb{R}^d$. Let $\phi(x)=(2\pi)^{-d/2}\exp\left( -\|x\|^2 /2 \right)$ denote the standard Gaussian density, and $\phi^\kappa(x)=(2\pi \kappa)^{-d/2}\exp(-\|x\|^2 /2\kappa)$ be its scaled version with variance $\kappa>0$. Then,
\begin{align*}
\gamma(V+w) &= \int_{V+w} \phi(x)dx = \int_{V}\phi(x+w)dx
=\int_{V} \frac{\phi(x+w)}{\phi^\kappa(x)}\,\phi^\kappa(x)dx,
\end{align*}
where we have changed the base measure to the scaled Gaussian $d\gamma^\kappa(x) = \phi^\kappa(x)dx$. 

\textbf{Step 1: Hölder's inequality and scaling invariance}.
Fixing some $\kappa>0$ (to be chosen later), we upper bound the integral above as
\begin{equation*}
\int_{V} \frac{\phi(x+w)}{\phi^\kappa(x)}\,\phi^\kappa(x)dx \leq \left[ \sup_{x \in V} \frac{\phi(x+w)}{\phi^\kappa(x)} \right] \int_{V}\phi^\kappa(x)dx.
\end{equation*}
Due to the scaling invariance of cones (i.e., $\alpha V=V$ for $\alpha>0$),
\begin{align*}
\int_{V}\phi^\kappa(x)dx = \mathbb{P}_{g \sim \mathcal{N}(0,I)}\{ \sqrt{ \kappa }\,g \in V \}
= \mathbb{P}_{g\sim \mathcal{N}(0,I)}\{ g \in V \} = \gamma(V).
\end{align*}
What remains is to upper-bound the ratio $\phi(x+w)/\phi^\kappa(x)$, which we write out as
\begin{equation*}
\kappa^{d/2} \frac{\exp\left(-\tfrac12\|x + w\|^2\right)}{\exp\left(-\tfrac{1}{2\kappa}\|x\|^2\right)} = \kappa^{d/2} \exp\left(-Q_{w}(x)\right),
\end{equation*}
where $Q_{w}(x):= \frac{1}{2}\|x+w\|^2 - \frac{1}{2\kappa}\|x\|^2$, and consequently,
\begin{equation*}
\sup_{x \in V}\, \frac{\phi(x+w)}{\phi^\kappa(x)} = \kappa^{d/2}\exp \left( -\inf_{x \in V}\,Q_{w}(x) \right).
\end{equation*}
We proceed by lower-bounding $\inf_{x \in V}\,Q_{w}(x)$.

\textbf{Step 2: Lower bounds from weak duality}.
If the variance $\kappa > 1$, the quadratic $Q_{w}$ is strongly convex and has a finite infimum over the cone $V$. 
By weak duality \citep[Lemma 11.38]{rockafellar1998variational}, $\inf_{x \in V}\,Q_{w}(x) \geq \sup_{y \in V^*}\,-Q_{w}^*(y)$,
where the convex conjugate of the quadratic $Q_{w}$ is $Q_{w}^*(y) = \tfrac{\kappa}{2(\kappa-1)} \|y - w\|^2 - \tfrac{1}{2}\|w\|^2$.
We can now evaluate
\begin{align*}
\sup_{y \in V^*}\,-Q_{w}^*(y) &= \sup_{y \in V^*}\,\left\{ -\frac{\kappa}{2(\kappa-1)}\|y-w\|^2 + \tfrac{1}{2}\|w\|^2 \right\} \\
&= \tfrac{1}{2}\|w\|^2 - \frac{\kappa}{2(\kappa-1)}\inf_{y \in V^*} \|y-w\|^2 = \tfrac{1}{2}\|w\|^2 - \frac{\kappa}{2(\kappa-1)}\,\mathrm{dist}(w, V^*)^2,
\end{align*}
and consequently, express
\begin{align*}
\sup_{x \in V}\, \frac{\phi(x+w)}{\phi^\kappa(x)} &= \kappa^{d/2}\exp \left( -\inf_{x \in V}\,Q_{w}(x) \right) \\
&\leq \kappa^{d/2} \exp \left( \frac{\kappa}{2(\kappa-1)}\,\mathrm{dist}(w, V^*)^2 \right)\,\exp\left( - \tfrac{1}{2} \|w\|^2 \right).
\end{align*}

\textbf{Step 3: Optimize over $\kappa$}.
Define the function
\begin{equation*}
h(\kappa) = \kappa^{d/2} \exp\left( \frac{\kappa}{2 (\kappa - 1)} \mathrm{dist}(w, V^*)^2 \right),
\end{equation*}
and recall from the previous steps that
$%
\gamma(V+w) \leq \gamma(V)\,h(\kappa) \exp\left( - \tfrac{1}{2}\|w\|^2 \right)
$ %
holds for any choice of $\kappa > 1$.
We choose the value
\begin{equation*}
\kappa = 1 + \mathrm{dist}(w, V^*)/\sqrt{ d },
\end{equation*}
which is valid provided $\mathrm{dist}(w, V^*)>0$. (The case $\mathrm{dist}(w, V^*)=0$ will be handled separately.) This choice yields
\begin{align*}
h(\kappa) &= \left( 1+ \frac{\mathrm{dist}(w, V^*)}{\sqrt{ d }} \right)^{d/2} \exp \left\{ \left( \frac{\sqrt{ d }}{\mathrm{dist}(w, V^*)}+1 \right) \frac{\mathrm{dist}(w, V^*)^2}{2} \right\} \\
&\leq \exp \left( \tfrac12\mathrm{dist}(w, V^*)\sqrt{ d } \right)
\exp \left( \tfrac12\mathrm{dist}(w, V^*)\sqrt{ d } + \tfrac12\mathrm{dist}(w, V^*)^2 \right) \\
&= \exp \left( \mathrm{dist}(w, V^*)\sqrt{ d } + \tfrac{1}{2}\mathrm{dist}(w, V^*)^2 \right).
\end{align*}
Putting the above inequalities together, we finally obtain
\begin{align*}
\gamma(V+w) &\leq\gamma(V)\,\exp \left( \mathrm{dist}(w, V^*)\sqrt{ d } + \tfrac{1}{2}\mathrm{dist}(w, V^*)^2  - \tfrac{1}{2}\|w\|^2\right) \\
&= \gamma(V)\, \exp \left( \mathrm{dist}(w, V^*)\sqrt{ d } - \tfrac{1}{2} \|\proj_{V^*} w\|^2 \right),
\end{align*}
where the last equality uses the fact that $V^*$ is a closed convex cone (given any cone $V$) and the decomposition \citep[Section A.3.2]{hiriart-urruty_FundamentalsConvexAnalysis_2001}
\begin{equation*}
\|w\|^2 = \|\proj_{V^*}w\|^2 + \mathrm{dist}(w, V^*)^2.
\end{equation*}

The case $\mathrm{dist}(w, V^*)=0$ is straightforward, where for any $\kappa>1$, it holds that
\begin{equation*}
\gamma(V+w) \leq \gamma(V)\,h(\kappa) \exp\left( - \tfrac{1}{2}\|w\|^2 \right) 
=\gamma(V)\,\kappa^{d/2}\,\exp\left( -\tfrac{1}{2}\|w\|^2 \right).
\end{equation*}
Taking a limit as $\kappa \downarrow 1$ immediately yields $\gamma(V+w) \leq \gamma(V)\exp\left( -\frac{1}{2}\|w\|^2 \right)$. We conclude the proof of the upper bound by noting that $w=\proj_{V^*}w$ whenever $\mathrm{dist}(w, V^*)=0$.

\textbf{Lower bound}.
The above proof can be adapted to obtain a lower bound by studying the infima of the ratios $\phi(x+w) /\phi^\kappa(x)$ for $\kappa < 1$. We instead use a simpler method partially adapted from \citet[Section 3.3]{ledoux_ProbabilityBanachSpaces_1991}.

Because the case $\gamma(V)=0$ is trivial, assume $\gamma(V)>0$. We start with the integral
\begin{align*}
\gamma(V+w) &= \int_{V}\phi(x+w)dx \\
&= (2\pi)^{-d/2} \int_{V} e^{ - \|x+w\|^2/2}dx \\
&= (2\pi)^{-d/2}\,e^{-\|w\|^2/2} \, \int_{V} e^{ -w\cdot x }\, e^{ - \|x\|^2/2 } dx \\
&= e^{ -\|w\|^2/2 } \, \int_{V} e^{ -w\cdot x } \phi(x)dx 
= e^{ -\|w\|^2/2 } \gamma(V)\, \int_{\mathbb{R}^d} e^{ -w\cdot x }\,\, \indicator_{V}(x) \frac{\phi(x)}{\gamma(V)}dx,
\end{align*}
where $\indicator_{V}(x): \mathbb{R}^d \to \{ 0,1 \}$ is the Bernoulli indicator of $V$. Consider a random vector $X$ distributed according to the probability density $\indicator_{V}(\cdot)\,\phi(\cdot) /\gamma(V)$.
Rewriting the integrals using expectations, and using Jensen's inequality gives
\begin{equation*}
\int_{\mathbb{R}^d} e^{ -w\cdot x }\,\,\indicator_{V}(x) \frac{\phi(x)}{\gamma(V)}dx = \mathbb{E}\,e^{ -w\cdot X } \geq \exp \left( - \mathbb{E}\, w\cdot X \right) .
\end{equation*}
Noting that the random vector $X \in V$ (with probability 1), we may use the Moreau decomposition~\cite[Theorem A.3.2.5]{hiriart-urruty_FundamentalsConvexAnalysis_2001} to bound
\begin{align*}
w\cdot X &= \underbrace{ (\proj_{V^\circ}w)\cdot X }_{ \leq 0 } + (\proj_{V^{\circ\circ}}w)\cdot X \\
&\leq \|\proj_{V^{\circ\circ}}w\|\|X\| = \mathrm{dist}(w, V^\circ)\|X\| = \mathrm{dist}(-w, V^*)\|X\|,
\end{align*}
where $V^\circ=-V^*$ is the polar cone. Consequently,
\begin{equation*}
\exp \left( - \mathbb{E}\, w\cdot X \right) \geq \exp \left( - \mathrm{dist}(-w, V^*)\,\mathbb{E}\|X\| \right) .
\end{equation*}

What remains is to show that $\mathbb{E}\|X\|\leq \sqrt{ d }$ via Gaussian computations:
\begin{equation*}
\mathbb{E}\|X\| = \int_{\mathbb{R}^d} \|x\|\, \indicator_{V}(x) \frac{\phi(x)}{\gamma(V)}dx 
= \frac{1}{\gamma(V)}\, \mathbb{E}\,\|g\|\,\indicator_{V}(g) ,
\end{equation*}
where $g \sim \mathcal{N}(0,I_{d})$. Noting that the random variables $\rho:= \|g\|$ and $\theta := g/\|g\|$ are independent by properties of Gaussians, and that $\indicator_{V}(g)=\indicator_{V}(\theta)$ (with probability 1), we conclude that
\begin{align*}
\mathbb{E}\,\|g\|\,\indicator_{V}(g)
= \mathbb{E}\,[\rho\,\indicator_{V}(\theta)]
=\mathbb{E}[\rho]\,\mathbb{E}\,\indicator_{V}(\theta)
= \mathbb{E} \|g\|\,\mathbb{E}\,\indicator_{V}(g)
\leq \sqrt{ d }\,\gamma(V).
\end{align*}
Collecting everything yields
\begin{align*}
\gamma(V+w) &\geq e^{ -\|w\|^2/2 }\gamma(V) \exp \left( - \mathrm{dist}(-w, V^*)\,\mathbb{E}\|X\| \right)  \\
&\geq \gamma(V) \exp\left( - \tfrac{1}{2}\|w\|^2 - \mathrm{dist}(-w, V^*)\sqrt{d} \right),
\end{align*}
as required.
\end{proof}
\begin{proof}[\hypertarget{loc:gaussian_measure_of_the_margin_resulting_from_an_inward_shift.proof}Proof of \Cref{loc:gaussian_measure_of_the_margin_resulting_from_an_inward_shift.statement}]

To prove the lower bound, note that $T \cap [T+ \partial (\varepsilon \psi)(z)] \supseteq T \cap[T+w]$ for any subgradient $w \in \partial (\varepsilon\psi)(z) \cap T$.  Because the normal cone $T$ is convex and $w \in T$, we have the containment $T+ w \subseteq T+T=T$, and consequently, $T \cap[T+w]=T+w$. Putting these together and applying the lower bound from \Cref{loc:gaussian_measure_of_shifted_cones.statement} gives
\begin{align*}
\gamma(T \cap [T+ \partial (\varepsilon\psi)(z)]) &\geq \gamma(T \cap[T+w]) \\
&= \gamma(T+w) \\
&\geq \gamma(T)\exp\left( - \tfrac{1}{2}\|w\|^2 - \mathrm{dist}(-w, T^*)\sqrt{n} \right) \\
&= \gamma(T)\exp\left( - \tfrac{1}{2}\|w\|^2 - \|w\|\sqrt{n} \right),
\end{align*}
where the last inequality follows from the fact that $\mathrm{dist}(-w, T^*)=\mathrm{dist}(w, T^\circ)=\|\proj_{T}\,w\| = \|w\|$ for $w \in T$. 

The upper bound follows from the upper bound in \Cref{loc:gaussian_measure_of_shifted_cones.statement};
\begin{align*}
\gamma(T \cap [T+\varepsilon \nabla \psi(z)]) &\le \gamma(T+\varepsilon \nabla \psi(z)) \\
&\leq \gamma(T) \exp \left( - \tfrac{1}{2} \|\proj_{T^*} \,\varepsilon \nabla \psi(z)\|^2 + \mathrm{dist}(\varepsilon \nabla \psi(z), T^*)\sqrt{ n } \right).
\end{align*}
\end{proof}

\begin{proof}[\hypertarget{loc:corollary_of_the_inward_shift_result.proof}Proof of \Cref{loc:corollary_of_the_inward_shift_result.statement}]

For any $z \in \vrt(Q)$ and corresponding normal cone $T =N(z)$, it holds by \Cref{loc:gaussian_measure_of_the_margin_resulting_from_an_inward_shift.statement} that
\begin{align*}
\gamma(T \cap [T + \partial (\varepsilon\psi)(z)]) &\geq \gamma(T)\exp\left( -\tfrac{1}{2} \|w\|^2 - \|w\|\sqrt{ n } \right)
\end{align*}
for any $w \in\partial(\varepsilon\psi)(z) \cap T$. If $\varepsilon >0$, then by $\partial(\varepsilon \psi)=\varepsilon \partial \psi$, it holds that $\varepsilon v_{z} \in \partial(\varepsilon \psi)(z)\cap T$. If $\varepsilon=0$ on the other hand, $\varepsilon v_{z}=0 \in N_{\mathrm{dom}(\psi)}(z)\cap T = \partial(\varepsilon \psi)(z)\cap T$ trivially. Therefore, taking $w=\varepsilon v_{z}$, we have
\begin{align*}
\gamma(T \cap [T + \partial (\varepsilon\psi)(z)])  &\geq \gamma(T)\exp\left( - \tfrac{1}{2}\varepsilon^2\|v_{z}\|^2 - \varepsilon \|v_{z}\|\sqrt{n} \right) \\
&\geq \gamma(T)\exp \left( - \tfrac{1}{2}\varepsilon^2 B^2 - \varepsilon B \sqrt{ n } \right).
\end{align*}

Summing over all $z \in \vrt(Q)$ and using \Cref{loc:geometric_probabilistic_results.statement} gives
\begin{align*}
\mathbb{P}[\mathrm{ER}(\varepsilon)] &= \sum_{z \in \vrt(Q)} \gamma(N(z) \cap [N(z) +\partial (\varepsilon\psi)(z)]) \\
&\geq \sum_{z \in \vrt(Q)} \gamma(N(z))\exp \left( - \tfrac{1}{2}\varepsilon^2 B^2 - \varepsilon B \sqrt{ n } \right)
= \exp \left( - \tfrac{1}{2}\varepsilon^2 B^2 - \varepsilon B \sqrt{ n } \right),
\end{align*}
where the last equality uses the fact that $\sum_{z \in \vrt(Q)}\gamma(N(z))=\gamma(\mathbb{R}^n)=1$ for a (bounded) polytope $Q$. Taking the complement of $\mathrm{ER}(\varepsilon)$ and using the elementary inequality $1-t \leq e^{ -t }$ ($t \in \mathbb{R}$) concludes the first part of the proof.

To bound $\mathbb{E}\,\overline{\varepsilon}$, use~\eqref{eq:expectation_bound} together with the inequalities above to obtain 
\begin{align*}
\mathbb{E}\,\overline{\varepsilon} &= \int_{0}^\infty \mathbb{P}(\mathrm{ER}(\varepsilon))d\varepsilon \\
&\geq \int_{0}^\infty \exp \left( -\tfrac{1}{2}\varepsilon^2B^2 - \varepsilon B\sqrt{ n } \right) d\varepsilon \\
&\geq \int_{0}^{2\sqrt{ n }/B} \exp(-2\varepsilon B\sqrt{ n })d\varepsilon
= \frac{1-e^{ -4n }}{2B\sqrt{ n }}.
\end{align*}
\end{proof}

\begin{proof}[\hypertarget{loc:basic_exact_regularisation_theorem.proof}Proof of \Cref{loc:basic_exact_regularisation_theorem.statement}]

From \Cref{loc:geometric_probabilistic_results.statement} and the lower bound in \Cref{loc:gaussian_measure_of_the_margin_resulting_from_an_inward_shift.statement},
\begin{align*}
\prob(\mathrm{ER}(\varepsilon)) &= \textstyle\sum_{z  \in \vrt(Q)} \gamma(T \cap [T+ \varepsilon \nabla\psi(z)])\\
&\ge \textstyle\sum_{z  \in \vrt(Q)} \gamma(N(z))\exp\left( - \tfrac{1}{2} \varepsilon^2\|\nabla \psi(z)\|^2 - \varepsilon\|\nabla \psi(z)\|\sqrt{n} \right)\\
&\ge 1 - \max_{z  \in \vrt(Q)}( \varepsilon^2\|\nabla \psi(z)\|^2 + \varepsilon\|\nabla \psi(z)\|\sqrt{n}).
\end{align*}
Above, the second inequality utilizes the fact that $\sum_{z \in \vrt(Q)} \gamma(N(z))$ is a convex combination, as is made clear by \Cref{loc:normal_cones_and_faces.statement}.
So if we assume that
\begin{equation*}
\varepsilon \leq \tfrac{1}{2}\delta(\sqrt{n}\max_{z \in \vrt(Q)}\|\nabla \psi(z)\|)^{-1},
\end{equation*}
then
$\prob(\mathrm{ER}(\varepsilon)) \ge 1-\delta$, so the first part of the
result follows.

For the second part, since $\nabla \psi(z)  \in N^\star(z)$, then from \Cref{loc:gaussian_measure_of_the_margin_resulting_from_an_inward_shift.statement},
\begin{align*}
\prob(\mathrm{ER}(\varepsilon)) & \le  \textstyle\sum_{z \in \vrt(Q)} \gamma(N(z))
\exp\left(- \tfrac12 \varepsilon^2\|\nabla \psi(z)\|^2\right)\\
 &\le \exp\left(- \tfrac{1}{2} \varepsilon^2 \textstyle\min_{z \in \vrt(Q)}\|\nabla \psi(z)\|^2\right),
\end{align*}
and the second part of the result follows.
\end{proof}

\begin{proof}[\hypertarget{loc:representer_vectors_for_the_margin.proof}Proof of \Cref{loc:representer_vectors_for_the_margin.statement}]

We start by proving that the margin can be expressed as
\begin{equation*}
M(T,w)= \bigcup_{i=1}^{\ell} \,\{ x \in T \mid 0 \leq s_{i}\cdot x < s_{i}\cdot w \}.
\end{equation*}
For any $x \in M(T,w)$, it holds that $x \in T$ and $x-w \notin T$. As a result, $s_{i}\cdot x \geq 0$ for all $i \in [\ell]$, and there exists some $i_{0} \in [\ell]$ such that $s_{i_{0}}\cdot (x-w) < 0$. Consequently, $0 \leq s_{i_{0}}\cdot x < s_{i_{0}}\cdot w$ must hold. Conversely, suppose $0 \leq s_{i_{0}}\cdot x < s_{i_{0}}\cdot w$ holds for some $i_{0} \in [\ell]$. Then, $s_{i_{0}}\cdot(x-w) < 0$ and consequently $x \in M(T,w)$.

Next, notice that one can also write
\begin{equation*}
\{ x \in T \mid 0 \leq s_{i}\cdot x < s_{i}\cdot w \} = \{ x \in T \mid 0 \leq s_{i}\cdot x < (s_{i}\cdot w)_{+} \}
\end{equation*}
(if $s_{i}\cdot w <0$ then both sides equal the empty set). Using this fact, and noting that $s_{i}\cdot \tilde{w} = (s_{i}\cdot w)_{+}$ for all $i \in [\ell]$ by the definition of a representer vector, we conclude the proof as
\begin{align*}
M(T,w) &= \textstyle\bigcup_{i=1}^\ell \{ x \in T \mid 0 \leq s_{i}\cdot x < (s_{i}\cdot w)_{+} \} \\
&= \textstyle\bigcup_{i=1}^\ell \{ x \in T \mid 0 \leq s_{i}\cdot x < s_{i}\cdot \tilde{w} \} = M(T, \tilde{w}).
\end{align*}
\end{proof}

\begin{proof}[\hypertarget{loc:bound_via_representer_vectors.proof}Proof of \Cref{loc:bound_via_representer_vectors.statement}]

We use the key properties of the representer vectors to reduce to the setting of \Cref{loc:gaussian_measure_of_the_margin_resulting_from_an_inward_shift.statement}.

\textbf{Lower bound}. Simplifying as before and passing through the definition of the margin gives
\begin{align*}
\gamma(T \cap [T+ \partial(\varepsilon \psi)(z)]) &\geq \gamma(T \cap[T+w]) \\
&= \gamma(T) - \gamma(M(T, w)) \\
&= \gamma(T) - \gamma(M(T, \tilde{w}))
= \gamma(T \cap[T+\tilde{w}]).
\end{align*}
Recall that the representer vector $\tilde{w}= S_{T}^{-1}(S_{T}w)_{+}$ not only induces the same margin as $w$, but is also a member of $T$. Therefore, using convexity of the cone $T$ and applying the lower bound from \Cref{loc:gaussian_measure_of_shifted_cones.statement} gives
\begin{align*}
\gamma(T \cap [T+\tilde{w}]) &= \gamma(T+\tilde{w}) \\
&\geq \gamma(T)\exp\left( - \tfrac{1}{2}\|\tilde{w}\|^2 - \mathrm{dist}(-\tilde{w}, T^*)\sqrt{n} \right) \\
&= \gamma(T)\exp\left( - \tfrac{1}{2}\|\tilde{w}\|^2 - \|\tilde{w}\|\sqrt{n} \right).
\end{align*}

\textbf{Upper bound}. Following an identical calculation, we have
\begin{equation*}
\gamma(T \cap[T+\varepsilon \nabla \psi(z)]) = \gamma(T \cap[T+\tilde{w}]) = \gamma(T+\tilde{w})
\end{equation*}
where now $\tilde{w}:= \varepsilon\, S_{T}^{-1}(S_{T}\nabla \psi(z))_{+}$. Applying the upper bound from \Cref{loc:gaussian_measure_of_shifted_cones.statement} gives
\begin{equation*}
\gamma(T+\tilde{w}) \leq \gamma(T) \exp \left( - \tfrac{1}{2} \|\proj_{T^*} \tilde{w}\|^2 + \mathrm{dist}(\tilde{w}, T^*)\sqrt{ n } \right).
\end{equation*}
\end{proof}

\begin{proof}[\hypertarget{loc:partition_of_the_margin.proof}Proof of \Cref{loc:partition_of_the_margin.statement}]

For any $x \in M$, $x - w \notin M$, so we may define
\begin{equation*}
t := \sup \{ q  \in [0,1) \mid (x - qw) \in T\}.
\end{equation*}
Note that the supremum is attained, because this supremum can be recast as the supremum of a continuous function over a compact set. The attainment implies that $t \in [0,1)$.
Recall that $T =  \{x \in \mathbb{R}^n \mid \forall i \in [\ell], \, s^i \cdot x \ge 0\}$. There is at least one index $i \in [\ell]$ for which $s^i \cdot (x- t w) = 0$ and $\forall \varepsilon>0,$ $s^i \cdot (x- (t + \varepsilon) w) < 0$. Fixing $i$ to be any such index, 
we next show that $\forall x  \in M,$ $x \in P_i$.

Note that $x - tw \in T^i$ by definition of $t$. With $z := x-tw$,  we can write $x =z + tw$ for some $z \in T^i$, $t  \in [0,1)$. Second, we show $s^i \cdot w > 0$. Recall that  $s^i \cdot (x - w) < 0$ by definition of $t$, because $1> t$. Also, $s^i \cdot (x - tw)= 0.$
Adding these equations together so as to cancel the term that includes $x$, and then using the fact that $t < 1$, we find that $s^i \cdot w > 0$.

We have thus shown that $M \subseteq \bigcup_{i=1}^\ell P^i$. Because of this,
\begin{equation}
\label{eq:main:in:proof}
\mu(M)  \le \textstyle\mu\left(\bigcup_{i=1}^\ell P^i\right) \le \sum_{i = 1}^\ell \mu(P^i),
\end{equation}
so we have shown the first part of the result.

In the remainder of this proof, we assume that $w \in T$. In this setting, we show that $\bigcup_{i=1}^\ell P^i  \subseteq M(T, w)$. Take any $x \in P^i$ for any $i \in [\ell]$. From the definition of $P^i$, $x = (z + tw)$ for some $z \in T^i$ and $t \in [0,1)$. But then $s^i\cdot(x-w) = s^i \cdot (z + (t-1)w) < 0$, and so $x - w  \notin T$. This argument holds because $w  \in T$ implies that $\forall j  \in [\ell], w  \cdot s_j  \ge 0$, and $z  \in T^i  \implies s^i \cdot z = 0$. Additionally, since both $w, z \in T$,  we find that $x = (z + tw) \in T$ because convex cones are closed under sums and and positive scalar multiplication. Hence $x  \in T \setminus [T+w] =: M(T, w)$. This shows the reverse inclusion.

It only remains to show that $P^i  \cap P^j$ is of measure zero for any $i  \neq  j$,
because then the second inequality in \eqref{eq:main:in:proof} holds with
equality, and we are done. By \citet[Proposition 1(b)]{lu_NormalFansPolyhedral_2008},
the set $T^i \cap T^j$, being an intersection of distinct facets of a
single cone $T$, has a dimension
$n-2$ (in the sense of the dimension of the affine hull). 
Then $P^i \cap P^j  = \bigcup_{t  \in [0, 1)} [T^i  \cap T^j] + tw$, which is a set of dimension $n-1$,
and it therefore has zero Gaussian measure.
\end{proof}

\begin{proof}[\hypertarget{loc:gaussian_measure_of_the_margin.proof}Proof of \Cref{loc:gaussian_measure_of_the_margin.statement}]

Thanks to \Cref{loc:partition_of_the_margin.statement}, we only need to
evaluate $\gamma(P^i)$ for some $i \in [\ell]$.
Fix $i  \in [\ell]$, and let $w_\perp := (w  \cdot s^i)_+s^i$, and $w_\parallel  =  w- w_\perp$. We find $\gamma(P^i)$ with an iterated integral along $T^i$ and $w$.
\begin{subequations}
\begin{align}
\gamma(P^i) &= \frac{1}{(2\pi)^{n/2}}\int_{t \in [0, 1]} \int_{x \in T^i} \exp\left(- \frac{\|x + tw\|^2}{2}\right)dx (w \cdot s^i)_+dt \label{eq:equality_first:1}\\
&=\frac{(w \cdot s^i)_+}{(2\pi)^{n/2}}\int_{t \in [0, 1]} \int_{x \in T^i} \exp\left(- \frac{\|x + tw_{\parallel }\|^2 + t^2\|w_{\perp}\|^2}{2}\right)dx dt \label{eq:equality_first:2}\\
&= \frac{(w \cdot s^i)_+}{\sqrt{2\pi}}\int_{[0, 1]} \gamma_{\mathrm{rel}}(T^i+ tw_{\parallel })\exp\left(-t^2 \frac{(w \cdot s^i)_+^2}{2}\right)dt. \label{eq:equality_first:3}
\end{align}
\end{subequations}
In \Cref{eq:equality_first:2} we decompose the vector $x+tw$ into two
orthogonal components, and in \Cref{eq:equality_first:3}, we use the fact that
$\|w_\perp\| = (w  \cdot s^i)$, which we can substitute by $(w \cdot s^i)_+$
without changing the value of the expression, because of the factor of $(w \cdot s^i)_+$ in front of the integral which makes the entire expression zero when $w \cdot s^i < 0$.

\textbf{Upper bound}. Note that $\exp\left(-t^2 \frac{(w \cdot s^i)_+^2}{2}\right)  \le 1$, and with
an upper bound on $\gamma_{\mathrm{rel}}(T^i+ tw_{\parallel })$ from
\Cref{loc:simpler_measure_of_shifted_cones.statement},
\begin{align*}
\gamma(P_i) &\le \gamma_{\mathrm{rel}}(T^i)\frac{ (w \cdot s^i)_+}{\sqrt{2\pi}}\int_{[0, 1]} \exp \left( t\sqrt{n-1} \|w_\parallel \|\right)dt\\
 & \le  \gamma_{\mathrm{rel}}(T^i)\frac{ (w \cdot s^i)_+}{\sqrt{2\pi}}\exp \left( \sqrt{n-1} \|w_\parallel \|\right),
\end{align*}
which we get by setting $t = 1$ inside the integral, giving us a uniform upper-bound
on the integrand. The result then follows from \Cref{loc:partition_of_the_margin.statement} with $\|w_\parallel \| \le \|w\|$.

\textbf{Lower bound}.
In what follows, we lower-bound \Cref{eq:equality_first:3} by first applying \Cref{loc:simpler_measure_of_shifted_cones.statement} and then fixing $t = 1$ to find a uniform lower-bound on the
integrand.
\begin{align*}
\gamma(P_i) & \ge \frac{(w \cdot s^i)_+}{\sqrt{2\pi}}\int_{[0, 1]} \gamma_{\mathrm{rel}}(T^i+ tw_{\parallel })\exp\left(-\tfrac12 t^2 (w \cdot s^i)_+^2\right)dt\\
&\ge \gamma_{\mathrm{rel}}(T^i)\frac{ (w \cdot s^i)_+}{\sqrt{2\pi}} \int_{[0, 1]} \exp\left(- t\sqrt{n-1} \|w_\parallel\| - \tfrac12t^2\|w_{\parallel }\|^2\right)\exp\left(-\tfrac12 t^2(w \cdot s^i)_+^2\right) \\
&\ge \gamma_{\mathrm{rel}}(T^i)\frac{ (w \cdot s^i)_+}{\sqrt{2\pi}}  \exp\left(- \sqrt{n-1} \|w_\parallel\| - \tfrac12\|w_{\parallel }\|^2\right)\exp\left(-\tfrac12 (w \cdot s^i)_+^2\right) \\
&\ge \gamma_{\mathrm{rel}}(T^i)\frac{(w \cdot s^i)_+}{\sqrt{2\pi}} \exp \left(- \sqrt{n-1}\|w \| - \tfrac12\|w\|^2\right).
\end{align*}
Then because $w \in T$, the lower bound follows from \Cref{loc:partition_of_the_margin.statement}.
\end{proof}
\begin{proof}[\hypertarget{loc:applied_margin_measure_for_exact_regularization.proof}Proof of \Cref{loc:applied_margin_measure_for_exact_regularization.statement}]

For the upper bound, note that $N(z) \setminus [N(z)+ \partial (\varepsilon \psi)(z)] \subseteq N(z) \setminus [N(z)+w]$ for any subgradient $w \in \partial (\varepsilon\psi)(z)$. With this, the upper bound and the first lower bound follow from 
\Cref{loc:gaussian_measure_of_the_margin.statement} with $T = N(z)$ and $w \in \partial(\varepsilon \psi)(z)$,
by identifying $\varepsilon v = w$.

For the second lower-bound, let $w  = \varepsilon\nabla \psi(z)$ and 
$\tilde{w}  = S^{-1}(S w)_+$. As shown in \Cref{loc:representer_vectors_for_the_margin.statement}, the margin associated with
$\tilde{w}$ is exactly the same as the margin associated with $w$. 
We use the lower bound in 
\Cref{loc:gaussian_measure_of_the_margin.statement}
with $\tilde{w}$ instead of $w$. Finally, since $F(N(z), w)$ only depends on $(s^i  \cdot w)_+$,
and $(s^i  \cdot \tilde{w})_+  = (s^i  \cdot w)_+$, then
$F(N(z), \tilde{w}) = F(N(z), w)$,
so we may write $F(N(z), w)$ in place of $F(N(z), \tilde{w})$. With $\varepsilon v=w$, the result follows.
\end{proof}
\begin{proof}[\hypertarget{loc:tightness_of_the_loo_ball_case.proof}Proof of \Cref{loc:tightness_of_the_loo_ball_case.statement}]

A necessary condition for optimality for this problem is that $\sgn(\nabla f(z^\star)) \approx z^\star$, with $(\approx)$ meaning that the two sides do not have opposite signs ($0 \approx 1$ but $-1 \not \approx 1$). Therefore, if exact regularization is to succeed,
there must be a vertex $z^\star$ of $\Binf$ such that $\sgn(g) \approx z^\star$ (optimality of $P_0$), and such that $\sgn(g - \varepsilon  z^\star) \approx z^\star$ (optimality of $P_\varepsilon$).

Since the sign of $-\varepsilon z^\star$ is elementwise opposite to that of $g$, the condition $|g| - \varepsilon \allone  \ge 0$, 
is necessary, where $\allone \in \mathbb{R}^n$ is the all-ones vector.

We want to show that exact regularization fails with high probability, so we bound the probability that it succeeds: 
\begin{align}
\prob(\mathrm{ER}(\varepsilon)) &\le \prob(\min(|g|) \ge \varepsilon)
=  \prob\left(|g_1| \ge  \varepsilon\right)^n 
=\left(1 - \pr\left(|g_{1}| < \varepsilon\right)\right)^n. \label{eq:reversed:3}
\end{align}
When $\varepsilon \le 1$, we get $\pr\left(|g_{1}| < \varepsilon\right)  \ge \sqrt{2/(\pi e)}\cdot \varepsilon$.
This lower bound arises from a constant lower-bound (with bounded support) of the Gaussian density function.
With the identity $1+x  \le e^x$, it follows from \Cref{eq:reversed:3} that
\begin{equation}
\label{eq:first:bound}
\prob(\textrm{ER}(\varepsilon)) \le  \exp\left(-\sqrt{2/(\pi e)}\cdot n \varepsilon \right).
\end{equation}
Now consider the case $\varepsilon  \ge 1$. By \citet[Proposition 2.1.2]{vershynin_HighDimensionalProbabilityIntroduction_2018},
\begin{align*}
\prob(|g_1| &\ge  \varepsilon)^n  \le \left(\tfrac{2}{\pi}\right)^{n/2} \exp\left(-\tfrac12 n \varepsilon^2\right)
\le \exp\left(-\tfrac12 n \varepsilon^2\right)
\le \exp\left( -\tfrac12 n \varepsilon \right).
\end{align*}
This is a larger upper bound than that of \eqref{eq:first:bound},
and so the first part of the result follows. For the second part, it suffices to consider the elementary inequality
on $x  \in [-1, 0]$, that $\frac{1}{2}x + 1 \geq e^x$.
\end{proof}

\section{Discussion}
\label{loc:body.discussion}

We analyzed exact regularization in the random linear program $P_{0}$ with a standard Gaussian cost vector, deriving expressions for the probability of $\mathrm{ER}(\varepsilon)$ in terms of normal cone geometry. The polyhedral inner cones and margins play a central role in the analysis. We provided two-sided estimates of their Gaussian measure, which yield simple, explicit expressions in interesting special cases, including quadratically regularized linear programs on the Birkhoff polytope. These estimates agree with numerical experiments.

When exact regularization fails, deterministic bounds exist for $\mathrm{dist}(x_{\varepsilon}, \mathrm{Sol}(P_{0}))$, but these bounds involve parameters that are difficult to evaluate~\cite{friedlander_ExactRegularizationConvex_2008}. The Gaussian randomization framework we develop may provide tools needed to derive high-probability and expectation bounds on solution distance when exact regularization fails. The normal-cone techniques and Gaussian measure estimates may enable characterization of the distributions of the regularized solutions $x_{\varepsilon}$, conditioned on failure events.

Recent work reveals that support monotonicity in quadratically constrained regularized LPs fails in general, but tends to hold in experiments~\cite{gonzalez-sanz_MonotonicityQuadraticallyRegularized_2025}. These empirical observations suggest an underlying probabilistic structure. Can we quantify the probability of occurrence for randomized cost vectors?

The polyhedral structure of LPs allows explicit representations of normal cones and their Gaussian measure. For more general conic problems that include quadratic and semidefinite constraints, second-order cones and linear matrix inequalities characterize the corresponding normal cone geometry. Our main technical result on the Gaussian measure of shifted cones (\cref{loc:gaussian_measure_of_shifted_cones.statement}) does not depend on conic polyhedrality and thus generalizes to those settings. However, the analogs of polyhedral inner cones and margins present technical challenges.

\section{Acknowledgments}

The authors thank Andrew Warren for highlighting connections to quadratically regularized optimization transport and to Yifan Sun for fruitful discussions.

\bibliographystyle{abbrvnat}
\bibliography{bibliography}

\appendix
\section{Auxiliary lemma}
\label{sec:aux_proofs}

\begin{lemma}[Intersections of normal cones]
\label{loc:auxiliary_result_intersections_of_normal_cones.statement}
Let $Q$ be a closed convex set in $\mathbb{R}^n$, and let $z_{1}, z_{2} \in Q$. Then, $N\left( \tfrac12(z_{1}+z_{2}) \right) = N(z_{1}) \cap N(z_{2})$.
\end{lemma}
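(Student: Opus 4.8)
Write $m := \tfrac12(z_1+z_2)$, which lies in $Q$ by convexity, so the normal cone $N(m)$ is defined via $v \in N(m) \iff v\cdot(y-m) \le 0$ for all $y \in Q$. The plan is to prove the two inclusions separately, with the bulk of the work in showing $N(m) \subseteq N(z_1)\cap N(z_2)$.

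For the inclusion $N(z_1)\cap N(z_2) \subseteq N(m)$, I would simply average the defining inequalities: if $v\cdot(y-z_1)\le 0$ and $v\cdot(y-z_2)\le 0$ for all $y\in Q$, then since $y - m = \tfrac12(y-z_1) + \tfrac12(y-z_2)$, we get $v\cdot(y-m)\le 0$ for all $y\in Q$, i.e.\ $v\in N(m)$. This is entirely routine.

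For the reverse inclusion, fix $v\in N(m)$. The key step — and the only one that is not a one-line manipulation — is to observe that testing the inequality $v\cdot(y-m)\le 0$ against the two feasible points $y=z_1$ and $y=z_2$ (both in $Q$ by hypothesis) gives $v\cdot(z_1-m)\le 0$ and $v\cdot(z_2-m)\le 0$; but $z_1 - m = -(z_2-m) = \tfrac12(z_1-z_2)$, so these two inequalities read $\tfrac12 v\cdot(z_1-z_2)\le 0$ and $-\tfrac12 v\cdot(z_1-z_2)\le 0$, forcing $v\cdot(z_1-z_2)=0$. Geometrically this says that the linear functional $v$ is constant on the segment $[z_1,z_2]\subseteq Q$ because it attains its maximum over that segment at the interior point $m$. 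Consequently $v\cdot m = v\cdot z_1 = v\cdot z_2$.

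Finishing is then immediate: for any $y\in Q$ and $i\in\{1,2\}$, $v\cdot(y-z_i) = v\cdot(y-m) + v\cdot(m-z_i) = v\cdot(y-m) + 0 \le 0$, so $v\in N(z_1)\cap N(z_2)$. I do not anticipate any real obstacle here; the lemma is elementary, and the one point worth stating carefully is the passage from the two opposing inequalities to the equality $v\cdot(z_1-z_2)=0$. Note that closedness of $Q$ is not even needed — only convexity and $z_1,z_2\in Q$ — consistent with the hypotheses as stated.
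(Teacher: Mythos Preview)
Your proof is correct and essentially the same idea as the paper's: both directions hinge on the observation that a linear functional attaining its maximum over $Q$ at the midpoint $m=\tfrac12(z_1+z_2)$ must attain that same maximum at $z_1$ and $z_2$. The paper packages this via the exposed face $F_Q(v)=\{x\in Q\mid v\cdot x=\max_{y\in Q}v\cdot y\}$ and the equivalence $v\in N(x)\iff x\in F_Q(v)$, invoking the face property that $m\in F_Q(v)$ forces $z_1,z_2\in F_Q(v)$; you unwind that same step directly from the inequalities, which is slightly more elementary but not a different argument.
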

\begin{proof}[\hypertarget{loc:auxiliary_result_intersections_of_normal_cones.proof}Proof of \Cref{loc:auxiliary_result_intersections_of_normal_cones.statement}]

($\subseteq$) Let $F_{Q}(v) := \{ x \in Q \mid v\cdot x = \max_{y \in Q} v\cdot y \}$
be the face of $Q$ exposed by $v$ \cite[Proposition A.5.3.3]{hiriart-urruty_FundamentalsConvexAnalysis_2001}. For any $x\in Q$, we have $v\in N(x)$ if and only if $x\in F(v)$. Thus, if $v \in N\big( (z_{1}+z_{2})/2 \big)$, then $\tfrac{1}{2}(z_{1}+z_{2}) \in F(v)$. By the defining face property, this implies $z_{1},z_{2} \in F(v)$, i.e., $v\in N(z_{1})\cap N(z_{2})$.

($\supseteq$) If $v \in N(z_{1})\cap N(z_{2})$, then $z_{1},z_{2} \in F(v)$. Since $F(v)$ is a face, $\tfrac{1}{2}(z_{1}+z_{2}) \in F(v)$, hence $v \in N\big( (z_{1}+z_{2})/2 \big)$.
\end{proof}

\end{document}